\newtheorem{thm}{Theorem}[section]
\newtheorem{prop}{Proposition}[section]
\newtheorem{lem}{Lemma}[section]
\newtheorem{cor}{Corollary}[section]
\title{Uniqueness of solutions, radiation conditions, and complexity of the metric at infinity}
\author{Hironori Kumura}
\email{smhkumu@ipc.shizuoka.ac.jp} 
\address{Department of Mathematics, Shizuoka University, 
Shizuoka 422-8529, Japan}
\date{}
\begin{document}

\maketitle

\begin{abstract}
The purpose of this paper is to prove the uniqueness theorem of solutions of eigenvalue equations on one end of Riemannian manifolds for drift Laplacians, including the standard Laplacian as a special case; we shall impose ``a sort of radiation condition'' at infinity on solutions. 
We shall also provide several Riemannian manifolds whose Laplacians satisfy the absence of embedded eigenvalues and besides the absolutely continuity, although growth orders of their metrics on ends are very complicated. 
\end{abstract}
\section{Introduction}

The Laplace-Beltrami operator $\Delta_g$ on a noncompact complete Riemannian manifold $(M, g)$ is essentially self-adjoint on $C^{\infty}_0(M)$; the relationship between the spectral structure of its self-adjoint extension to $L^2(M, v_g)$ and the geometry of $(M, g)$ has been studied by several authors from various points of view. 
For example, the absence of eigenvalues was studied in [2-7, 9-11, 14, 15, 18, 20] and so on. 
This paper will treat the case where $(M, g)$ has specific types of end $E$, and show the uniqueness of solutions $f$ of eigenvalue equations $\Delta_g f + \langle \nabla w, \nabla f \rangle + \alpha f = 0$ on $E$ for drift Laplacians $\Delta_g + \nabla w$, imposing ``a kind of radiation condition'' at infinity; here, $w$ is a $C^{\infty}$-function on $E$ and $\alpha >0$ is a constant. 

We shall state our results precisely. 
Let $(M, g)$ be a noncompact connected complete Riemannian manifold and $U$ be an open subset of $M$. 
We shall say that $E := M-U$ is an {\it end with radial coordinates} if and only if the boundary $\partial E$ is $C^{\infty}$, compact, and connected, and the outward normal exponential map $\exp_{\partial E}^{\perp} : N^{+}(\partial E) \to E$ induces a diffeomorphism, where $N^{+}(\partial E) := \{ v\in T(\partial E) \mid v {\rm ~is~outward~normal~to~}\partial E \}$; note that $U$ is not necessarily relatively compact. 
We shall set $r:={\rm dist}(\partial E,*)$ on $E$. 
In the sequel, the following notations will be used:
\begin{align*}
& E(s, t) := \{ x \in E \mid s < r(x) < t \} \quad \mathrm{for}\ 0 \le s < t ; \\
& E(s, \infty) := \{x \in E \mid s<r(x)\} \quad \mathrm{for}\ 0 \le s < \infty; \\ 
& S(t) := \{x \in E \mid r(x)=t\} \quad \mathrm{for}\ 0 \le t< \infty;\\
& \widetilde{g} := g - dr \otimes dr.
\end{align*}
We denote the Riemannian measure of $(M,g)$ by $v_g$, and the measure on each $S(t)$ induced from $g$ simply by $A$ for $t\ge 0$. 
Let $w$ be a $C^{\infty}$-function on $M$. 
Our concern is to study a drift Laplacian $\Delta_g + \nabla w$: this operator is associated with the Dirichlet form
\begin{align*}
 \int_M \langle \nabla u, \nabla v \rangle e^w dv_g 
 \quad {\rm for}\ u, v \in H^1(M, e^w v_g),
\end{align*}
where $\nabla u$ stands for the gradient of $u$, and $\langle~, ~\rangle =g$. 
Let $\nabla dr$ denote the covariant derivative of $1$-form $dr$, that is, the Hessian of $r$. 
The main theorem of this paper is the following: 
\begin{thm}
Let $(M,g)$ be a noncompact Riemannian manifold, $E$ be an end with radial coordinates of $(M,g)$, and $w$ be a $C^{\infty}$-function on $M$. 
Let $r$ denote ${\rm dist}\,(\partial E,*)$ on $E$. 
Assume that there exist constants $\widetilde{\alpha}_1 > 0$, $A_1 > 0$, $B_1 > 0$, $r_0 \ge 0$, $b \in \mathbb{R}$, and $c \in \mathbb{R}$ such that 
\begin{align}
 & \nabla dr \ge \frac{\widetilde{\alpha}_1}{r} \, \widetilde{g} 
   \quad {\rm on}~E(r_0,\infty); \tag{$*_1$} \\
 & -A_1 \le r \Big( \Delta_g r + \frac{\partial w}{\partial r} - c \Big) - b \le B_1 \quad {\rm on}~E(r_0,\infty). \tag{$*_2$}
\end{align}
Let $\alpha>0$ and $\gamma >0 $ be constants, and assume that $f$ is a solution of
\begin{align*}
 \Delta_g f + \langle \nabla w, \nabla f \rangle + \alpha f = 0 
 \quad {\rm on}\ \ E, 
\end{align*}
satisfying 
\begin{align}
 \liminf_{t\to \infty}~t^{\gamma} \int_{S(t)}
 \bigg\{ \left( \frac{\partial f}{\partial r} \right) ^2 
 + f^2 \bigg\} e^w dA = 0. \tag{$*_3$}
\end{align}
Let $\varepsilon_0$ be the constant defined by
\begin{align}
 \varepsilon_0
 := \min \bigg\{ \frac{2\gamma + A_1 - B_1}{2}, \ 2 \widetilde{\alpha }_1 - B_1 \bigg\}, \tag{$*_4$}
\end{align}
and assume that 
\begin{align}
& 2 \min \{ \widetilde{\alpha}_1, \gamma \} > A_1 + B_1~; \tag{$*_5$} \\ 
& \alpha > \frac{c^2}{4} \bigg\{ 1 
  + \frac{(A_1 + B_1)^2}{(2\gamma - \varepsilon_0 - B_1)(\varepsilon_0 - A_1)} \bigg\}. \tag{$*_6$}
\end{align}
Then, we have $f\equiv 0$ on $E$.
\end{thm}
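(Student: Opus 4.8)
\emph{Sketch of the intended argument.} Rewrite the equation in divergence form $\operatorname{div}(e^{w}\nabla f)+\alpha e^{w}f=0$ on $E$, which absorbs the drift into the weighted measure $e^{w}v_{g}$, and use the decomposition $g=dr\otimes dr+\widetilde{g}(r)$ on $E$; here $m:=\Delta_{g}r+\partial_{r}w=\partial_{r}\log\big(e^{w}\sqrt{\det\widetilde{g}}\,\big)$ is the radial logarithmic derivative of the weighted volume element, which is exactly the quantity $(*_{2})$ pins down as $rm=rc+b+O(1)$ with error in $[-A_{1},B_{1}]$. The engine of the proof is a Rellich--Pohozaev identity on the shells $E(r_{0},t)$, obtained by pairing the equation with a multiplier of the form $\varphi(r)e^{w}\big(\partial_{r}f+\tfrac12 m f\big)$ (equivalently, combining the two multipliers $\varphi\,e^{w}\,\partial_{r}f$ and $\psi\,e^{w}\,f$), together with its companion Caccioppoli identity from $\psi(r)e^{w}f$. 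After integration by parts these take the form
\begin{align*}
 \mathcal B(t)-\mathcal B(r_{0})=\int_{E(r_{0},t)}\mathcal Q\,e^{w}\,dv_{g},
\end{align*}
where $\mathcal B(t)=\int_{S(t)}\varphi\big\{(\partial_{r}f)^{2}-\tfrac12|\nabla f|^{2}+\tfrac{\alpha}{2}f^{2}+(\text{lower order})\big\}e^{w}\,dA$ and $\mathcal Q$ is a quadratic form in $\big(\partial_{r}f,\ \bar\nabla f,\ f\big)$, $\bar\nabla$ the gradient along $S(t)$, whose coefficients are built from $\varphi,\varphi',m$ and $\nabla dr$ only; in particular, because $w$ enters solely through $e^{w}$, no tangential derivative of $w$ appears, and the only geometric inputs are $m$ and $\nabla dr$, governed by $(*_{2})$ and $(*_{1})$.

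The heart of the matter is the choice of the radial weight --- a power $\varphi\sim r^{\varepsilon_{0}}$ with the exponent forced to lie in the open interval $(A_{1},\,2\gamma-B_{1})$ --- and of the balance between the two multipliers, so that $\mathcal Q$ dominates $\delta\,r^{\varepsilon_{0}-1}\big(|\nabla f|^{2}+f^{2}\big)e^{w}$ for some $\delta>0$ on $E(r_{1},\infty)$ with $r_{1}$ large. Into this one feeds $(*_{1})$, which makes the $|\bar\nabla f|^{2}$-coefficient positive (this is the source of the cap $\varepsilon_{0}\le 2\widetilde{\alpha}_{1}-B_{1}$ in $(*_{4})$); $(*_{2})$, which replaces $rm$ by $rc+b$ up to the $[-A_{1},B_{1}]$-error; and then a completion of the square between the $(\partial_{r}f)^{2}$- and the $f^{2}$-terms, whose cross-term is proportional to $c\,f\,\partial_{r}f$ and whose residual error is controlled by the worst case of that $[-A_{1},B_{1}]$-ambiguity --- the requirement that the resulting discriminant be favourable is precisely $(*_{6})$, the bare $c^{2}/4$ being the one-dimensional threshold past which $-\partial_{r}^{2}-c\,\partial_{r}-\alpha$ has only non-decaying oscillatory solutions. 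The value $\varepsilon_{0}$ in $(*_{4})$ is the one that \emph{maximises} the product $(2\gamma-\varepsilon_{0}-B_{1})(\varepsilon_{0}-A_{1})$ occurring in $(*_{6})$, subject to the cap from $(*_{1})$, and $(*_{5})$ says exactly that the admissible range of exponents is nonempty, i.e.\ $\varepsilon_{0}>A_{1}$ and $2\gamma-\varepsilon_{0}-B_{1}>0$. I expect this joint optimisation of weight and square-completion to be the principal obstacle; the remainder is organised bookkeeping.

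With $\mathcal Q$ positive-definite in hand, one lets $t\to\infty$ along a sequence $t_{k}$ supplied by the radiation condition $(*_{3})$. The parts of $\mathcal B(t_{k})$ carrying $(\partial_{r}f)^{2}$ and $f^{2}$ tend to $0$ by $(*_{3})$ --- the precise matching of the power of $t$ in the weight against the decay rate $t^{-\gamma}$ is what $(*_{4})$--$(*_{5})$ secure --- while the part carrying $|\bar\nabla f|^{2}$ is handled by averaging the identity over a unit shell $[t_{k},t_{k}+1]$, converting $\int_{t_{k}}^{t_{k}+1}\!\int_{S(t)}|\bar\nabla f|^{2}e^{w}\,dA\,dt$ into $\int_{E(t_{k},t_{k}+1)}|\bar\nabla f|^{2}e^{w}\,dv_{g}$, which the Caccioppoli identity on that shell bounds by sphere integrals of $f^{2}$ and $(\partial_{r}f)^{2}$ over a slightly larger shell --- again small by $(*_{3})$. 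Hence $\int_{E(r_{1},\infty)}\mathcal Q\,e^{w}\,dv_{g}=-\mathcal B(r_{1})<\infty$, so $\int_{E(r_{1},\infty)}r^{\varepsilon_{0}-1}\big(|\nabla f|^{2}+f^{2}\big)e^{w}\,dv_{g}<\infty$; the passage from this finite tail to its actual vanishing is then effected by iterating the same scheme with a weight one power heavier, whose $S(t)$-boundary term is now dominated by the tail just shown finite, hence vanishes along a sequence without further use of $(*_{3})$, while its bulk is again positive-definite by the same algebra --- equivalently, by feeding the bound into a monotonicity argument for the sphere-energy that would otherwise force a polynomial lower bound incompatible with $(*_{3})$. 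Either way $f\equiv 0$ on $E(r_{1},\infty)$. Finally, since $\Delta_{g}+\langle\nabla w,\nabla\,\cdot\,\rangle+\alpha$ is a second-order elliptic operator with smooth coefficients on the connected manifold $E$, the weak unique continuation property propagates $f\equiv 0$ from the open set $E(r_{1},\infty)$ to all of $E$.
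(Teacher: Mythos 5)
Your first stage is essentially the paper's: the combination of the multipliers $\varphi e^{w}\partial_{r}f$ and $\psi e^{w}f$ with a radial power weight is exactly the paper's Propositions 2.1--2.3 (the paper implements your ``$\partial_{r}f+\tfrac12 mf$'' device by conjugating with $e^{cr/2}$ first, which is the same thing), the exponent is optimised over $(A_{1},2\gamma-B_{1})$ capped by $2\widetilde{\alpha}_{1}-B_{1}$ exactly as you say, and the discriminant of the resulting quadratic in $(\partial_{r}f,f)$ is precisely where $(*_{6})$ enters. (One small simplification you miss: on the outer sphere $S(t)$ the tangential energy appears with a \emph{negative} sign, $2(\partial_{r}v)^{2}-|\nabla v|^{2}\le 2(\partial_{r}v)^{2}$, so no unit-shell averaging is needed; $(*_{3})$ alone kills that boundary term along a sequence.)

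The genuine gap is the endgame. Iterating your scheme ``with a weight one power heavier'' only yields $\int_{E}r^{m}\bigl(|\nabla f|^{2}+f^{2}\bigr)e^{w}\,dv_{g}<\infty$ for every $m$ --- this is the paper's Theorem 3.1, and it is nowhere near $f\equiv 0$: a function decaying like $e^{-r}$ satisfies all of these bounds. The paper needs two further, genuinely distinct stages. First (Section 4) it bootstraps to super-exponential decay, $\int_{E}e^{kr}\bigl(f^{2}+|\nabla f|^{2}\bigr)e^{w-cr}\,dv_{g}<\infty$ for \emph{every} $k>0$; this is not another turn of the same crank but uses weights $r^{m}e^{k_{0}r}$ with $m\to\infty$, a differential inequality $F'(x)+2(1-\theta)\overline{c}_{7}F(x)\le 0$ for the sphere energy, and an iteration $a_{n+1}=a_{n}+\sqrt{\{(\overline{c}_{4})^{2}+\overline{c}_{5}\}a_{n}^{2}+\overline{c}_{6}}-\overline{c}_{4}a_{n}$ whose divergence pushes the admissible exponential rate to infinity. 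Second (Section 5), given super-exponential decay, a Carleman-type argument with weight $e^{kr}$ produces $-k^{2}I_{1}(s)-kI_{2}(s)+I_{3}(s)\ge 0$ for all large $k$, and letting $k\to\infty$ forces $\int_{S(s)}f^{2}=0$, i.e.\ vanishing near infinity, after which unique continuation finishes as you say. Your alternative one-line fallback (``a monotonicity argument for the sphere-energy that would otherwise force a polynomial lower bound'') gestures at the right dichotomy but is not an argument; in particular nothing in your sketch rules out solutions with exponential decay, which is exactly the case the paper's Sections 4 and 5 exist to exclude. As written, the proposal proves at most the polynomial-decay estimate, not the theorem.
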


Note that $\Delta_g + \frac{\partial w}{\partial r}$ expresses the growth order of the measure $e^w v_g$, and hence, $(*_2)$ implies that it converges to a constant $c \in \mathbb{R}$ at infinity. 
Note that, we do not assume that the constant $c$ is nonnegative; even if $c$ is negative, the conclusion of Theorem $1.1$ holds good because of the geometrical expansion condition $(*_1)$. 
Note also that we do not assume that $f\in L^2(E, e^w v_g)$ in Theorem $1.1$; indeed, Theorem $1.1$ with {\it small} $0<\gamma \ll 1$ is required to prove the limiting absorption principle for $-\Delta_g$ in author's paper \cite{K4} ; for details, see Section $6$ below. 

As for the technical constant $(*_4)$, note that $(*_5)$ implies that $A_1 < \varepsilon_0 \le \frac{2\gamma + A_1 - B_1}{2}$. 
Note that, if necessary, by replacing $\gamma > 0$ with smaller one, we may assume that $\widetilde{\alpha }_1 \ge \gamma$. 
Then, since $(*_5)$ implies $\varepsilon_0 = \frac{2\gamma + A_1 - B_1}{2}$, $(*_4)$ and $(*_6)$ are reduced to the following simpler form:
\begin{align}
 \alpha > \frac{c^2}{4} \left\{ 1  
  + \bigg( \frac{2 (A_1 + B_1)}{2\gamma - A_1 - B_1} \bigg)^2 \right\} 
  \tag{$*_7$}. 
\end{align}
In view of $(*_2)$ and $(*_6)$ (or $(*_7)$), we can see why ``small perturbation $\frac{\varepsilon}{r}$'' of $\Delta_g r + \frac{\partial w}{\partial r} - \frac{b}{r}$ is allowed for the absence of eigenvalues in case $c = 0$, which was first observed in the paper \cite{K3}.  

A drift Laplacian $-\Delta_g - \nabla w$ defined on $C_0^{\infty}(M)$ is essentially self-adjoint on $L^2(M, e^w v_g)$, and we shall denote its self-adjoint extension by the same symbol for simplicity. 
Then, note that $(*_2)$ implies that $\sigma_{\rm ess}\left(- \Delta_g - \nabla w \right) \supseteq [\frac{c^2}{4}, \infty)$, where $\sigma_{\rm ess} (-\Delta_g - \nabla w )$ stands for the essential spectrum of $-\Delta_g - \nabla w$ on $L^2(M, e^w v_g)$ (see, for example, \cite{K1}). 

By putting $\gamma=1$ in Theorem $1.1$, we obtain the following:
\begin{cor}
Let $(M, g)$ be a noncompact connected complete Riemannian manifold, $E$ be an end with radial coordinates of $(M, g)$, and $w$ be a $C^{\infty}$-function on $M$. 
Let $r$ denote ${\rm dist}\,(\partial E,*)$ on $E$. 
Assume that there exist constants $\widetilde{\alpha}_1 > 0$, $A_1 > 0$, $B_1 > 0$, $r_0 \ge 0$, $b \in \mathbb{R}$, and $c \in \mathbb{R}$ such that 
\begin{align*}
  \nabla dr \ge \frac{\widetilde{\alpha}_1}{r} \,\widetilde{g} 
   \quad {\rm on}~E(r_0,\infty); \ 
  -A_1 \le r \Big(\Delta_g r + \frac{\partial w}{\partial r} - c \Big) -b 
  \le B_1 \quad {\rm on}~E(r_0,\infty). 
\end{align*}
Assume that $2 \min\{ \widetilde{\alpha}_1, 1 \} > A_1 + B_1$, and set 
\begin{align*}
 \varepsilon_1 
 := 
 \min \bigg\{ \frac{2 + A_1 - B_1}{2}, \ 2 \widetilde{\alpha }_1 - B_1 \bigg\}.
\end{align*}
Then, $\sigma_{\rm ess}(-\Delta_g - \nabla w) \supseteq [\frac{c^2}{4}, \infty)$ and  
\begin{align*}
 \sigma_{\rm pp}(-\Delta_g -\nabla w) \cap \left( \frac{c^2}{4} \bigg\{ 1 
 + \frac{(A_1 + B_1)^2}{(2 - \varepsilon_1 - B_1)(\varepsilon_1 - A_1)} \bigg\}, ~ \infty \right) = \emptyset ,
\end{align*}
where $\sigma_{\rm pp}(-\Delta_g -\nabla w)$ stands for the set of all eigenvalues of $-\Delta_g -\nabla w$ on $L^2(M, e^w v_g)$. 
\end{cor}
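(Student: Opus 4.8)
The plan is to derive Corollary 1.1 from Theorem 1.1 by a contradiction argument; the only points requiring care are the verification of the radiation condition $(*_3)$ for an $L^2$-eigenfunction and an appeal to unique continuation, everything else being the direct specialization $\gamma = 1$. First, the inclusion $\sigma_{\mathrm{ess}}(-\Delta_g - \nabla w) \supseteq [c^2/4, \infty)$ is precisely the fact recalled in the discussion following Theorem 1.1 (with reference to \cite{K1}), so I would simply invoke it.

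For the assertion on $\sigma_{\mathrm{pp}}$, suppose to the contrary that some $\alpha$ lying in the indicated interval is an eigenvalue, and pick an eigenfunction $f \in L^2(M, e^w v_g)$ with $f \not\equiv 0$. By elliptic regularity $f$ is smooth, hence a classical solution of $\Delta_g f + \langle \nabla w, \nabla f \rangle + \alpha f = 0$ on $M$, in particular on $E$. Since $-\Delta_g - \nabla w$ is the self-adjoint operator associated with the Dirichlet form $\int_M |\nabla u|^2 e^w \, dv_g$, the eigenfunction $f$ lies in the form domain $H^1(M, e^w v_g)$, so $\int_M (|\nabla f|^2 + f^2) e^w \, dv_g < \infty$.

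Next I would verify $(*_3)$ with $\gamma = 1$. On $E$ the distance function $r$ is smooth with $|\nabla r| \equiv 1$, so the coarea formula gives $dv_g = dA\, dt$ there, and together with $(\partial f/\partial r)^2 \le |\nabla f|^2$ one obtains
\[
 \int_0^\infty \!\left( \int_{S(t)} \Big\{ \Big( \tfrac{\partial f}{\partial r} \Big)^2 + f^2 \Big\} e^w \, dA \right) dt \ \le\ \int_E (|\nabla f|^2 + f^2) e^w \, dv_g \ <\ \infty .
\]
Since any nonnegative measurable $h$ with $\int_0^\infty h\, dt < \infty$ satisfies $\liminf_{t\to\infty} t\, h(t) = 0$ (otherwise $h(t) \gtrsim 1/t$ for large $t$ and the integral diverges), this is exactly $(*_3)$ with $\gamma = 1$. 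Moreover the hypothesis $2\min\{\widetilde{\alpha}_1, 1\} > A_1 + B_1$ is $(*_5)$ with $\gamma = 1$, the constant $\varepsilon_1$ coincides with the constant $\varepsilon_0$ of $(*_4)$ for $\gamma = 1$, and ``$\alpha$ lies in the excluded interval'' is precisely $(*_6)$ with $\gamma = 1$. Hence Theorem 1.1 applies and yields $f \equiv 0$ on $E$.

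Finally, $f$ vanishes on the nonempty open set $E(r_0, \infty) \subseteq M$ while solving a second-order linear elliptic equation with smooth coefficients on the connected manifold $M$; by the Aronszajn unique continuation theorem, $f \equiv 0$ on all of $M$, contradicting $f \not\equiv 0$. Therefore no eigenvalue can lie in the stated interval, proving the corollary. I expect the only genuinely delicate step to be the reduction of $(*_3)$ to integrability --- that is, knowing the eigenfunction actually belongs to $H^1(M, e^w v_g)$ so that the normal-derivative term is controlled --- while the essential-spectrum inclusion and the unique continuation step are standard citations.
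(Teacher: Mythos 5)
Your proposal is correct and is essentially the argument the paper intends (the paper merely says ``by putting $\gamma=1$ in Theorem 1.1''): specialize to $\gamma=1$, note that an $L^2$-eigenfunction lies in $H^1(M,e^wv_g)$ so that the coarea formula plus the elementary fact $\int_0^\infty h\,dt<\infty \Rightarrow \liminf_{t\to\infty}t\,h(t)=0$ yields $(*_3)$, and finish with unique continuation, exactly as the paper does in its own vanishing step. No gaps.
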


In case $\widetilde{\alpha}_1 \ge 1$, the condition ``~$2 \min\{ \widetilde{\alpha}_1, 1 \} > A_1 + B_1$'' implies $\varepsilon_1 = \frac{2 + A_1 - B_1}{2}$, and hence, the assertion in Corollary 1.1 is reduced to the following simpler one: 
\begin{align}
 \sigma_{\rm pp}(-\Delta_g -\nabla w) \cap 
 \left( \frac{c^2}{4} \bigg\{ 1 + \bigg( \frac{2 (A_1 + B_1)}{2 - A_1 - B_1} \bigg)^2 \bigg\} , ~ \infty \right) = \emptyset \tag{$*_8$}. 
\end{align}

In case $c=1$, $b=0$, and $w \equiv 0$, it seems to be interesting to compare Corollary $1.1$ and Theorem $1.2$ below: 
\begin{thm}
Let $n \ge 2$ be an integer, and $A \in \mathbb{R}\backslash \{0\}$ and $\mu >0$ be constants. 
Assume that 
$$
  |A| < 1 \quad {\rm and} \quad 4 \mu^2 < \frac{A^2}{4-A^2} .
$$
Then, there exist a rotationally symmetric manifold $(\mathbb{R}^n, g:=dr^2+ f^2(r) g_{S^{n-1}(1)})$ and a constant $r_0 > 0$ such that the following {\rm (i)} and {\rm (ii)} hold~{\rm :}
\begin{enumerate}[{\rm (i)}]
\item $\displaystyle \nabla dr = \frac{1}{n-1} \Big\{ 1 + A \frac{\sin (2\mu r)}{r} \Big\} \widetilde{g}$ \ \ for $r \ge r_0${\rm ;} in particular, the following holds{\rm :} 
$$
  r(\Delta_g r - 1 ) = A \sin (2\mu r) \quad {\rm for}~r \ge r_0 ~;~
  \sigma_{\rm ess}(-\Delta_g)=[\frac{1}{4}, \infty) ~;
$$
\item $\sigma_{\rm pp}(-\Delta_g) = \left\{ \frac{1}{4}( 1 + 4\mu ^2) \right\}$.
\end{enumerate}
\end{thm}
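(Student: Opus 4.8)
\emph{Plan of proof.} The plan is to use the rotational symmetry to reduce the spectral problem to a one-dimensional Schr\"odinger operator, to realize the embedded eigenvalue $\tfrac14+\mu^2$ by a Wigner--von Neumann type resonance --- which is exactly where the hypothesis $4\mu^2<A^2/(4-A^2)$ enters --- and to control the rest of the spectrum through an appropriate choice of the ``inner'' part of the warping function $f$ (in the notation of Theorem~1.1 we are taking $w\equiv0$, $c=1$, $b=0$). For $g=dr^2+f^2(r)g_{S^{n-1}(1)}$ one has $\nabla dr=\frac{f'}{f}\widetilde g$ and $\Delta_g r=(n-1)\frac{f'}{f}$, so requiring (i) on $[r_0,\infty)$ is the same as requiring $h:=\frac{f'}{f}=\frac{1}{n-1}\bigl(1+A\frac{\sin2\mu r}{r}\bigr)$ there; then $\Delta_g r\to1$, the identity $r(\Delta_g r-1)=A\sin2\mu r$ is immediate, and $\sigma_{\rm ess}(-\Delta_g)=[\frac14,\infty)$ follows as in the discussion after Theorem~1.1 (cf.\ \cite{K1}). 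Decomposing $L^2(M,v_g)$ into spherical harmonics gives $-\Delta_g=\bigoplus_{k\ge0}L_k$ with $L_k=-\frac{d^2}{dr^2}-(n-1)h\frac{d}{dr}+\frac{k(k+n-2)}{f^2}$ on $L^2((0,\infty),f^{n-1}dr)$ (with the usual regularity at $r=0$), and the substitution $v=f^{(n-1)/2}u$ makes $L_k$ unitarily equivalent to $\mathcal L_k:=-\frac{d^2}{dr^2}+V_k$ on $L^2((0,\infty),dr)$, where $V_k=\frac{n-1}{2}h'+\frac{(n-1)^2}{4}h^2+\frac{k(k+n-2)}{f^2}$. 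A short computation shows that, for $r\ge r_0$,
\begin{align*}
 V_0(r)=\tfrac14+\frac{q(r)}{r}+O(r^{-2}),\qquad q(r)=A\mu\cos2\mu r+\tfrac{A}{2}\sin2\mu r,
\end{align*}
while $k(k+n-2)/f^2=O(e^{-2r/(n-1)})$ for $k\ge1$, since $f(r)\sim Ce^{r/(n-1)}$.

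Next I would analyse the behaviour of solutions of $\mathcal L_0v=\lambda v$ near $\infty$ for $\lambda\in(\tfrac14,\infty)$. Putting $k=\sqrt{\lambda-\tfrac14}$ and $v=\rho\sin\theta$, $v'=k\rho\cos\theta$, one obtains $(\log\rho)'=\frac{V_0-1/4}{2k}\sin2\theta$ and $\theta'=k-\frac{V_0-1/4}{k}\sin^2\theta$. The oscillatory part of $V_0-\tfrac14$ has frequency $2\mu$, so after integrating the oscillations by parts (a Harris--Lutz/averaging argument) the right-hand sides contribute a non-integrable term only when $k=\mu$: for $k\ne\mu$ every solution has $\rho\to{\rm const}>0$, hence belongs to no $L^2$-neighbourhood of $\infty$. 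For $k=\mu$, the slow phase $\psi:=2(\theta-\mu r)$ satisfies $\psi'\approx\frac1r\bigl(\tfrac{A}{2}\cos\psi-\tfrac{A}{4\mu}\sin\psi\bigr)$ and $(\log\rho)'\approx\frac1r\bigl(\tfrac{A}{4}\sin\psi+\tfrac{A}{8\mu}\cos\psi\bigr)$, and at the two equilibria $\tan\psi=2\mu$ one gets $(\log\rho)'\approx\pm\tfrac{\delta}{r}$ with $\delta=\frac{|A|\sqrt{1+4\mu^2}}{8\mu}$; there is therefore a one-dimensional space of solutions $v_+\sim{\rm const}\cdot r^{-\delta}\sin(\mu r+\cdots)$, and a complementary space growing like $r^{\delta}$. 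Hence $\mathcal L_0v=(\tfrac14+\mu^2)v$ has a nonzero solution in $L^2$ near $\infty$ exactly when $\delta>\tfrac12$, i.e.\ when $|A|\sqrt{1+4\mu^2}>4\mu$, equivalently
\begin{align*}
 4\mu^2<\frac{A^2}{4-A^2},
\end{align*}
which is precisely the standing hypothesis; the exponentially small terms for $k\ge1$ affect none of this. So $\tfrac14+\mu^2$ is the only possible element of $\sigma_{\rm pp}(-\Delta_g)\cap(\tfrac14,\infty)$. (At $\lambda=\tfrac14$ an analogous but elementary analysis gives solutions $\sim r^{s_\pm}$ with $s_\pm=\tfrac12\bigl(1\pm\sqrt{1+A^2/2}\bigr)$; since $|A|<1$ forces $s_->-\tfrac12$, neither lies in $L^2$ near $\infty$, so $\tfrac14\notin\sigma_{\rm pp}(L_0)$.)

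Finally I would choose $r_0$ and $h$ on $[0,r_0]$ so that $f$ extends to a $C^\infty$ metric on $\mathbb R^n$ (hence $f(0)=0$, $f'(0)=1$, with the even-jet conditions at $0$), $f>0$ on $(0,\infty)$, $h$ agrees with $\frac1{n-1}(1+A\frac{\sin2\mu r}{r})$ to infinite order at $r_0$, and in addition (a) the solution of $L_0u=(\tfrac14+\mu^2)u$ regular at $0$ coincides on $[r_0,\infty)$ with a multiple of $v_+/f^{(n-1)/2}$, and (b) the solution of $L_0\phi=\tfrac14\phi$ regular at $0$ is positive on all of $(0,\infty)$. Given the analysis above, (a) makes $\tfrac14+\mu^2$ an eigenvalue; given (b), the ground-state substitution $u=\phi\psi$ gives $\int_M|\nabla u|^2dv_g\ge\tfrac14\int_M u^2dv_g$ for $u\in C_0^\infty(M)$, so $-\Delta_g\ge\tfrac14$; thus $\sigma_{\rm pp}(-\Delta_g)\subseteq[\tfrac14,\infty)$, no $L_k$ with $k\ge1$ produces eigenvalues below $\tfrac14$, and $L_k>L_0\ge\tfrac14$ strictly gives $\tfrac14\notin\sigma_{\rm pp}(L_k)$ for $k\ge1$, which with the parenthetical remark above yields $\sigma_{\rm pp}(-\Delta_g)=\{\tfrac14+\mu^2\}=\{\tfrac14(1+4\mu^2)\}$. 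Each of (a) and (b) is a single condition on the Cauchy data at $r=r_0$, and $h|_{[0,r_0]}$ varies over an infinite-dimensional family --- for instance, take $f$ equal to the constant-curvature-$(-1/(n-1)^2)$ model $(n-1)\sinh\tfrac{r}{n-1}$ on an initial interval and interpolate smoothly to the prescribed form --- so a continuity/degree argument should produce a choice satisfying (a) and (b) at once, while retaining smoothness and positivity; together with the earlier steps this proves the theorem.

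\emph{Main obstacle.} The computational core is the asymptotic analysis: producing the resonant exponent $\delta$ and the sharp threshold $4\mu^2<A^2/(4-A^2)$ requires the averaging estimates to be pushed through with explicit control of the $O(r^{-2})$ remainders. The genuinely delicate point, however, is the last step --- turning ``infinitely many parameters versus two constraints'' into an honest construction of the inner metric that at the same time realizes $\tfrac14+\mu^2$ as a bona fide eigenvalue (a global shooting/matching problem), keeps $[0,\tfrac14)$ free of point spectrum via a globally positive $\tfrac14$-harmonic function, and is genuinely $C^\infty$ at the origin of $\mathbb R^n$.
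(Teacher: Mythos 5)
Your reduction to $\mathcal L_0=-\frac{d^2}{dr^2}+V_0$, the identity $V_0-\frac14=\frac{q(r)}{r}+\frac{R(r)}{r^2}$ with $q=A\mu\cos 2\mu r+\frac{A}{2}\sin 2\mu r$ and $R=\frac{A^2}{4}\sin^2 2\mu r-\frac{A}{2}\sin 2\mu r$ (exact for $r\ge r_0$), and the Pr\"ufer/averaging analysis above $\frac14$ are correct: the resonant exponent is $\delta=\frac{|A|\sqrt{1+4\mu^2}}{8\mu}$, and $\delta>\frac12$ is precisely $4\mu^2<A^2/(4-A^2)$, so $\frac14+\mu^2$ is the unique candidate embedded eigenvalue and admits an $L^2$ solution at infinity exactly under the stated hypothesis. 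Note that the paper offers no proof beyond the reference to Theorem 1.8 of \cite{K2}, where the construction runs backwards: one prescribes a Wigner--von Neumann type eigenfunction and reads off $f'/f$ from the eigenvalue equation, so that your matching condition (a) holds by construction; your ``continuity/degree argument'' for achieving (a) is left entirely unproved and is a real gap, but it is the lesser of the two problems.

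The step that actually fails is your treatment of $\lambda=\frac14$. The exponents $s_\pm=\frac12(1\pm\sqrt{1+A^2/2})$ retain only the explicit repulsive part $\overline R/r^2=\frac{A^2}{8r^2}$ and discard the second-order effect of the oscillating term $q/r$, which after averaging contributes an \emph{attractive} $-\overline{Q^2}/r^2$, where $Q$ is the mean-zero primitive of $q$ and $\overline{Q^2}=\frac{A^2}{8}+\frac{A^2}{32\mu^2}$. The correct effective equation at the bottom is $z''=\frac{\overline R-\overline{Q^2}}{r^2}\,z=-\frac{A^2}{32\mu^2 r^2}\,z$, with exponents $\frac12\bigl(1\pm\sqrt{1-\frac{A^2}{8\mu^2}}\bigr)$. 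The hypotheses force $\mu^2<\frac1{12}$ and $A^2>\frac{16\mu^2}{1+4\mu^2}>8\mu^2$, so the Kneser constant $\frac{A^2}{32\mu^2}$ exceeds $\frac14$ and every solution at $\lambda=\frac14$ oscillates ($\sim r^{1/2}\cos(\beta\log r+\phi)$). Hence your condition (b) --- a globally positive $\frac14$-solution --- is unachievable for \emph{every} choice of inner metric; worse, the standard oscillation/variational argument (test functions supported between consecutive zeros of a $\frac14$-solution, all living in $\{r\ge r_0\}$) yields $\dim\mathrm{Ran}\,E_{-\Delta_g}\bigl((-\infty,\tfrac14)\bigr)=\infty$, i.e.\ infinitely many eigenvalues below $\frac14$ accumulating at $\frac14$, independently of the inner metric. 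So the inclusion $\sigma_{\rm pp}\subseteq[\frac14,\infty)$ cannot be obtained by your route and, unless I am miscomputing, cannot hold at all: the same oscillation strength that creates the embedded eigenvalue ($\overline{Q^2}=2\delta^2>\frac12$) pushes the effective potential at the bottom past the Kneser threshold, and $\overline R=\frac{A^2}{8}<\frac14$ cannot compensate. Part (ii) should therefore be read, and proved, as a statement about $\sigma_{\rm pp}(-\Delta_g)\cap[\frac14,\infty)$; you should rebuild the last step accordingly (and for the eigenvalue itself, the explicit eigenfunction-first construction of \cite{K2} is the reliable path).
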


In Theorem $1.2$, in order that $(\mathbb{R}^n, g)$ is expanding at infinity in the sense of $(*_1)$, $|A|$ must be smaller than $1$; this condition appears as ``~$2 \min\{ \widetilde{\alpha}_1, 1 \} > A_1 + B_1$'' in Corollary $1.1$; indeed, Theorem $1.2$ corresponds to the case $A=A_1=B_1$ and $\widetilde{\alpha}_1 = \infty$ in Corollary $1.1$. 
The upper part from the bottom $\frac{1}{4}$ of the essential spectrum is for the case $(*_8)$ with $c=1$ and $A=A_1=B_1$ is $\frac{4}{(1-A)^2}A^2$; on the other hand, the upper part from $\frac{1}{4}$ in Theorem $1.2$ is $\frac{1}{4(4-A^2)} A^2$; that is, upper parts from $\frac{1}{4}$ coincides with $A^2$ in both cases up to constant multipliers. In this sense, $(*_8)$ seems to be optimum. 

Theorem $1.2$ can be proved by slightly modifying the proof of Theorem $1.8$ in \cite{K2}. 

Corollary $1.1$ immediately implies the following corollary. 
The assumptions of Corollary 1.2 are very simple, comparing Corollary $1.1$. 

\begin{cor}
Let $(M,g)$ be a noncompact connected complete Riemannian manifold, $E$ be an end with radial coordinates of $(M,g)$, and $w$ be a $C^{\infty}$-function on $M$. 
Let $r$ denote ${\rm dist}\,(\partial E,*)$ on $E$. 
Assume that there exist constants $\widetilde{\alpha}_1 > 0$, $r_0 \ge 0$, $b \in \mathbb{R}$, and $c \in \mathbb{R}$ such that 
\begin{align*}
 \nabla dr \ge \frac{\widetilde{\alpha}_1}{r} \,\widetilde{g} 
 \quad {\rm on}~E(r_0,\infty); \ 
 \Delta_g r + \frac{\partial w}{\partial r} - c - \frac{b}{r} = o(r^{-1}) 
 \quad {\rm on}~E(r_0,\infty). 
\end{align*}
Let $L_{M \backslash E}$ denote the Dirichlet drift Laplacian $\Delta_g + \nabla w$ on $L^2(M \backslash E, e^{w} v_g)$, and assume that $c \neq 0$ and $\min \sigma_{\rm ess}\left( L_{M \backslash E} \right) \ge \frac{c^2}{4}$. 
Then, the drift Laplacian $-\Delta_g - \nabla w$ on $L^2(M, e^{w} v_g)$ satisfies $\sigma_{\rm ess}(-\Delta_g - \nabla w) = [\frac{c^2}{4}, \infty)$ and $\sigma_{\rm pp}(-\Delta_g -\nabla w) \cap (\frac{c^2}{4}, \infty) = \emptyset$. 
\end{cor}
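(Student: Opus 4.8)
The plan is to derive Corollary 1.2 directly from Corollary 1.1 by checking that its simpler hypotheses imply those of Corollary 1.1, and then handling the $\sigma_{\rm ess}$ claim separately. First I would observe that the growth condition $\Delta_g r + \partial w/\partial r - c - b/r = o(r^{-1})$ means that $r(\Delta_g r + \partial w/\partial r - c) - b = o(1)$ as $r\to\infty$; hence for any prescribed $\delta>0$ there is $r_1 \ge r_0$ with $-\delta \le r(\Delta_g r + \partial w/\partial r - c) - b \le \delta$ on $E(r_1,\infty)$. So one may take $A_1 = B_1 = \delta$ in Corollary 1.1, with $\delta$ as small as we like. Then the condition $2\min\{\widetilde{\alpha}_1,1\} > A_1 + B_1 = 2\delta$ holds once $\delta < \min\{\widetilde{\alpha}_1,1\}$, and the threshold in Corollary 1.1 (or its reduced form $(*_8)$) becomes $\frac{c^2}{4}\{1 + (2(A_1+B_1)/(2-A_1-B_1))^2\} = \frac{c^2}{4}\{1 + (4\delta/(2-2\delta))^2\}$, which tends to $\frac{c^2}{4}$ as $\delta \downarrow 0$.

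Next I would exploit the freedom in the choice of $\delta$: since $c \ne 0$, for any $\lambda > c^2/4$ we can pick $\delta = \delta(\lambda) > 0$ small enough that $\frac{c^2}{4}\{1 + (4\delta/(2-2\delta))^2\} < \lambda$ and also $\delta < \min\{\widetilde{\alpha}_1,1\}$. Applying Corollary 1.1 with these $A_1 = B_1 = \delta$ (using the end $E(r_1,\infty)$ in place of $E$, which is harmless since an eigenfunction restricted to a smaller end still satisfies the eigenvalue equation there, and an $L^2$ eigenfunction automatically satisfies a radiation-type decay $(*_3)$ with $\gamma = 1$ — this is the content that lets Corollary 1.1 apply to genuine eigenvalues) gives that $-\Delta_g - \nabla w$ has no eigenvalue in $(\frac{c^2}{4}\{1+(4\delta/(2-2\delta))^2\}, \infty)$, in particular none at $\lambda$. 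Since $\lambda > c^2/4$ was arbitrary, $\sigma_{\rm pp}(-\Delta_g - \nabla w) \cap (\frac{c^2}{4}, \infty) = \emptyset$.

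For the essential-spectrum statement I would argue as follows. The lower bound $\sigma_{\rm ess}(-\Delta_g - \nabla w) \supseteq [\frac{c^2}{4}, \infty)$ is exactly the assertion already recorded after $(*_2)$ in the introduction (it follows from the growth estimate $(*_2)$ with $A_1 = B_1 = \delta$ via the cited reference \cite{K1}, and is inherited in the limit). For the reverse inclusion, decompose $M = (M\backslash E) \cup E$. By a standard Dirichlet bracketing / decomposition argument, $\sigma_{\rm ess}(-\Delta_g - \nabla w)$ is contained in $\sigma_{\rm ess}(L_{M\backslash E}) \cup \sigma_{\rm ess}(L_E)$, where $L_E$ is the Dirichlet drift Laplacian on $E$; the hypothesis $\min\sigma_{\rm ess}(L_{M\backslash E}) \ge \frac{c^2}{4}$ controls the first piece, and for the second piece one uses again that $(*_2)$ (with $A_1 = B_1 = \delta$, any $\delta$) forces $\sigma_{\rm ess}(L_E) \subseteq [\frac{c^2}{4} - \eta(\delta), \infty)$ with $\eta(\delta) \to 0$; combined with the absence of eigenvalues above $c^2/4$ on $E$ just proved, letting $\delta \downarrow 0$ yields $\sigma_{\rm ess}(-\Delta_g - \nabla w) \subseteq [\frac{c^2}{4}, \infty)$. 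Together with the lower bound this gives equality.

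The main obstacle is the bookkeeping around ``$E$ versus $E(r_1,\infty)$'' and around applying a uniqueness theorem on an end to conclude the absence of global eigenvalues: one must verify carefully that an $L^2(M, e^w v_g)$ eigenfunction, restricted to $E(r_1,\infty)$, does satisfy the integral decay condition $(*_3)$ with $\gamma = 1$ (this is where the hypothesis $c\ne 0$ and the quantitative threshold really enter, since without decay the radiation condition is not automatic), and then that $f \equiv 0$ on an end together with unique continuation forces $f \equiv 0$ on all of $M$. Everything else is a limiting argument in the auxiliary parameter $\delta$, which is routine once the reduction is set up.
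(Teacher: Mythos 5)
Your argument is correct and is exactly the reduction the paper intends when it says Corollary 1.1 ``immediately implies'' Corollary 1.2: the $o(r^{-1})$ hypothesis lets you take $A_1=B_1=\delta$ arbitrarily small (on $E(r_1,\infty)$, which suffices since the hypotheses of Corollary 1.1 are only required on $E(r_0,\infty)$ for some $r_0$, so no relabelling of the end is actually needed), and the eigenvalue-free threshold then tends to $c^2/4$ as $\delta\downarrow 0$, while the essential-spectrum equality follows from the decomposition principle together with the hypothesis on $L_{M\backslash E}$ and the standard lower bound for the Dirichlet operator on the end. The only cosmetic blemish is the remark that the containment $\sigma_{\rm ess}(L_E)\subseteq[\frac{c^2}{4}-\eta(\delta),\infty)$ needs to be ``combined with the absence of eigenvalues'' --- it does not; letting $\delta\downarrow 0$ in that containment alone already gives $\sigma_{\rm ess}(L_E)\subseteq[\frac{c^2}{4},\infty)$.
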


In case $c = 0$, Corollary $1.1$ immediately implies the following: 

\begin{cor}
Let $(M,g)$ be a noncompact connected complete Riemannian manifold, $E$ be an end with radial coordinates of $(M,g)$, and $w$ be a $C^{\infty}$-function on $M$. 
Let $r$ denote ${\rm dist}\,(\partial E,*)$ on $E$. 
Assume that there exist constants $\widetilde{\alpha}_1 > 0$, $A_1 > 0$, $B_1 > 0$, $r_0 \ge 0$, and $b \in \mathbb{R}$ such that 
\begin{align*}
  \nabla dr \ge \frac{\widetilde{\alpha}_1}{r} \,\widetilde{g} 
   \quad {\rm on}~E(r_0,\infty); \ 
  -A_1 \le r \Big(\Delta_g r + \frac{\partial w}{\partial r} \Big) - b 
  \le B_1 \quad {\rm on}~E(r_0,\infty). 
\end{align*}
Assume that $2 \min\{ \widetilde{\alpha}_1, 1 \} > A_1 + B_1$. 
Then, $\sigma_{\rm ess}(-\Delta_g - \nabla w) = [ 0, \infty)$ and $\sigma_{\rm pp}(-\Delta_g -\nabla w) = \emptyset$. 
\end{cor}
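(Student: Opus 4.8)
The statement is a more or less formal consequence of Corollary~1.1 together with elementary spectral theory, so the plan is to reduce to that corollary in three short steps.

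\emph{Step 1 (nonnegativity).} First I would record that $-\Delta_g-\nabla w$ on $L^2(M,e^w v_g)$ is the self-adjoint operator associated with the closure of the nonnegative Dirichlet form $u\mapsto\int_M|\nabla u|^2e^w\,dv_g$; hence $\sigma(-\Delta_g-\nabla w)\subseteq[0,\infty)$, so in particular $-\Delta_g-\nabla w$ has no negative eigenvalues and $\sigma_{\rm ess}(-\Delta_g-\nabla w)\subseteq[0,\infty)$.

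\emph{Step 2 (invoking Corollary~1.1 with $c=0$).} With $c=0$ the hypotheses of Corollary~1.1 are precisely those assumed here, and the auxiliary constant there becomes $\varepsilon_1=\min\{\frac{2+A_1-B_1}{2},\,2\widetilde\alpha_1-B_1\}$. The assumption $2\min\{\widetilde\alpha_1,1\}>A_1+B_1$ forces $A_1+B_1<2$ and $2\widetilde\alpha_1>A_1+B_1$, whence $\varepsilon_1-A_1>0$ and $2-\varepsilon_1-B_1\ge\frac{2-A_1-B_1}{2}>0$; thus the factor $1+\frac{(A_1+B_1)^2}{(2-\varepsilon_1-B_1)(\varepsilon_1-A_1)}$ is a well-defined finite number, and multiplying it by $\frac{c^2}{4}=0$ shows that the interval excluded from $\sigma_{\rm pp}$ in Corollary~1.1 is exactly $(0,\infty)$. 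Hence Corollary~1.1 gives $\sigma_{\rm ess}(-\Delta_g-\nabla w)\supseteq[0,\infty)$ and $\sigma_{\rm pp}(-\Delta_g-\nabla w)\cap(0,\infty)=\emptyset$. Combined with Step~1 this yields $\sigma_{\rm ess}(-\Delta_g-\nabla w)=[0,\infty)$ and $\sigma_{\rm pp}(-\Delta_g-\nabla w)\subseteq\{0\}$.

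\emph{Step 3 (excluding the eigenvalue $0$).} If $f\in L^2(M,e^w v_g)$ satisfies $(-\Delta_g-\nabla w)f=0$, then pairing with $f$ gives $\int_M|\nabla f|^2e^w\,dv_g=0$, so $\nabla f\equiv0$ and, $M$ being connected, $f$ is constant; since the total weighted mass $e^w v_g(M)$ is infinite, this constant is $0$, and therefore $\sigma_{\rm pp}(-\Delta_g-\nabla w)=\emptyset$.

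The only genuinely non-formal point — and the step I would expect to require care — is the infinitude of $e^w v_g(M)$ used in Step~3. I would derive it directly from the hypotheses: $(*_1)$ already makes the area of $S(t)$, hence $v_g(E(r_0,t))$, grow (at least like $t^{(\dim M-1)\widetilde\alpha_1}$), while the two-sided bound on $r\bigl(\Delta_g r+\frac{\partial w}{\partial r}\bigr)$ pins down the radial growth of the weight $e^w$ relative to the Riemannian volume; these two facts together should give $e^w v_g(M)=\infty$. Everything preceding Step~3 is an immediate reduction to Corollary~1.1 and to the nonnegativity of the operator.
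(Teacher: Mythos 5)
Your Steps 1 and 2 are correct and are exactly the reduction the paper has in mind: the paper offers no proof beyond ``put $c=0$ in Corollary 1.1,'' and combining that with the nonnegativity of the Dirichlet form $u\mapsto\int_M|\nabla u|^2e^w\,dv_g$ does yield $\sigma_{\rm ess}(-\Delta_g-\nabla w)=[0,\infty)$ and $\sigma_{\rm pp}(-\Delta_g-\nabla w)\subseteq\{0\}$. Note that Theorem 1.1 itself cannot be used to exclude the eigenvalue $0$, since it requires $\alpha>0$; so the whole weight of the final claim $\sigma_{\rm pp}=\emptyset$ rests on your Step 3.

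Step 3 contains a genuine gap: the hypotheses do \emph{not} imply $e^w v_g(M)=\infty$. Writing $e^w\,dA=e^w\sqrt{G}\,dA_{\partial E}$, the identity $\frac{\partial}{\partial r}\log\bigl(e^w\sqrt{G}\bigr)=\Delta_g r+\frac{\partial w}{\partial r}$ shows that the two-sided bound $-A_1\le r\bigl(\Delta_g r+\frac{\partial w}{\partial r}\bigr)-b\le B_1$ pins the \emph{weighted} area density to $e^w\sqrt{G}\asymp r^{\,b\pm O(1)}$; since $b\in\mathbb{R}$ is arbitrary, taking $b+B_1<-1$ makes $e^w v_g(E)<\infty$. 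The unweighted area growth you extract from $(*_1)$ is of no help, because $e^w$ may decay fast enough to swallow it. Concretely, take $M=\mathbb{R}^n$ flat, $E=\{|x|\ge 1\}$, $r=|x|-1$, and $w=-\kappa\log|x|$ (smoothed near the origin) with $\kappa>n$. Then $\nabla dr=\frac{1}{r+1}\widetilde{g}\ge\frac{\widetilde{\alpha}_1}{r}\widetilde{g}$ for $\widetilde{\alpha}_1<1$ and $r$ large, $r\bigl(\Delta_g r+\frac{\partial w}{\partial r}\bigr)-(n-1-\kappa)=O(r^{-1})$, so all hypotheses hold with $b=n-1-\kappa$ and $A_1,B_1$ as small as desired; yet $e^w v_g(M)=\int|x|^{-\kappa}\,dx<\infty$, the constant function lies in $L^2(M,e^wv_g)$ and in the domain of the (essentially self-adjoint) operator, and $0$ is an eigenvalue. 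So your Step 3 cannot be completed from the stated hypotheses alone; one needs an extra assumption (e.g.\ $b>-1$, or directly $e^w v_g(M)=\infty$), failing which the conclusion must be weakened to $\sigma_{\rm pp}\subseteq\{0\}$. This is a defect of the corollary as stated as much as of your argument, but your proposed derivation of infinite weighted volume is the step that fails.
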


In Corollary 1.3, the amplitude of the constant $A_1 + B_1$ cannot be larger than $4$. 
Indeed, if $A_1 + B_1 > 4$, an embedded eigenvalue may emerge, as Theorem $1.3$ below shows. 
\begin{thm}
Let $n \ge 2$ be an integer, and $A \in \mathbb{R}$, $\mu >0$, and $b > 0$ be constants. 
Assume that 
$$
  b > |A| > 2.
$$
Then, there exist a rotationally symmetric manifold $(\mathbb{R}^n, g:=dr^2+ f^2(r) g_{S^{n-1}(1)})$ and a constant $r_0 > 0$ such that the following {\rm (i)} and {\rm (ii)} hold~{\rm :}
\begin{enumerate}[{\rm (i)}]
\item $\nabla dr = \frac{b - A \sin (2\mu r)}{(n-1)r}\,\widetilde{g}$ \ for $r \ge r_0$. 
In particular, 
\begin{align*}
& \nabla dr \ge \frac{b-|A|}{r} \widetilde{g} \quad {\rm for}~r \ge r_0 ~; 
  \quad \Delta_g r 
  = \frac{b}{r} - A \frac{\sin (2\mu r)}{r} \quad {\rm for}~r \ge r_0; \\ 
& \sigma_{\rm ess}(-\Delta_g) = [0, \infty);
\end{align*}
\item $\mu ^2 \in \sigma_{\rm pp}(-\Delta_g)$.
\end{enumerate}
\end{thm}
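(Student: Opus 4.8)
The plan is to realize the metric in rotationally symmetric form $g=dr^{2}+f(r)^{2}g_{S^{n-1}(1)}$, with the warping function $f$ forced on $[r_{0},\infty)$ by condition (i) and chosen on $[0,r_{0}]$ so as to manufacture an $L^{2}$ radial eigenfunction of eigenvalue $\mu^{2}$. For such a metric $\nabla dr=(f'/f)\widetilde g$ and $\Delta_{g}r=(n-1)f'/f$, so (i) forces $f'/f=\frac{b-A\sin(2\mu r)}{(n-1)r}$ on $[r_{0},\infty)$; this determines $f$ there up to a positive constant, and since $b>|A|$ one has $f'>0$ and $f$ of polynomial growth on the end, while $r(\Delta_{g}r-0)-b=-A\sin(2\mu r)$ is bounded. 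Thus $(*_2)$ holds with $w\equiv 0$, $c=0$, $A_1=B_1=|A|$, so the remark following Theorem 1.1 gives $\sigma_{\rm ess}(-\Delta_{g})\supseteq[0,\infty)$; as $-\Delta_{g}\ge0$, in fact $\sigma_{\rm ess}(-\Delta_{g})=[0,\infty)$, which is the spectral statement in (i). On $[0,r_{0}]$ I would extend $f$ to a smooth positive function agreeing with the above to infinite order at $r_{0}$ and odd with $f'(0)=1$ near $0$, so that $(\mathbb{R}^{n},g)$ is complete and diffeomorphic to $\mathbb{R}^{n}$; the precise extension is pinned down only at the end.

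For a radial $u$, $\Delta_{g}u+\mu^{2}u=u''+(n-1)\tfrac{f'}{f}u'+\mu^{2}u$, and the Liouville substitution $v:=f^{(n-1)/2}u$ turns $\Delta_{g}u+\mu^{2}u=0$ into $v''+(\mu^{2}-Q)v=0$ with $Q:=(f^{(n-1)/2})''/f^{(n-1)/2}$. Writing $\chi:=\tfrac{n-1}{2}\tfrac{f'}{f}=\tfrac{b-A\sin(2\mu r)}{2r}$ and $Q=\chi'+\chi^{2}$, a short computation gives, for $r\ge r_{0}$,
\[
 Q(r)=-\frac{\mu A\cos(2\mu r)}{r}+O(r^{-2}),
\]
so there $v$ solves a Wigner--von Neumann perturbation $v''+\big(\mu^{2}+\tfrac{\mu A\cos(2\mu r)}{r}+O(r^{-2})\big)v=0$ of $v''+\mu^{2}v=0$, with oscillation frequency $2\mu$ exactly resonant with $2\sqrt{\mu^{2}}$. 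The standard asymptotic integration (an averaging transformation that exposes the secular amplitudes $r^{\pm|A|/4}$, followed by Levinson's theorem — the classical Wigner--von Neumann analysis) produces a fundamental system $v_{\pm}$ on $[r_{0},\infty)$ with
\[
 v_{\pm}(r)=r^{\pm|A|/4}\big(\cos(\mu r+\theta_{\pm})+o(1)\big)\qquad(r\to\infty)
\]
and matching asymptotics for $v_{\pm}'$. Hence $v_{-}\in L^{2}((r_{0},\infty),dr)$ iff $|A|/2>1$, i.e. $|A|>2$ — this is exactly where the hypothesis enters — and since $\int_{\mathbb{R}^{n}}u^{2}\,v_{g}=\mathrm{vol}(S^{n-1})\int_{0}^{\infty}v^{2}\,dr$, the function $u_{-}:=f^{-(n-1)/2}v_{-}$ lies in $L^{2}$ over the end.

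It remains to choose $f|_{[0,r_{0}]}$ so that the solution $v_{\rm reg}$ of $v''+(\mu^{2}-Q)v=0$ regular at the origin — i.e. $v_{\rm reg}\sim c\,r^{(n-1)/2}$ as $r\to0$, equivalently $u=f^{-(n-1)/2}v_{\rm reg}$ smooth across $0$ — is proportional to $v_{-}$ on $[r_{0},\infty)$. As $v_{\rm reg},v_{-}$ solve the same equation there, their Wronskian $W:=v_{\rm reg}v_{-}'-v_{\rm reg}'v_{-}$ is constant on the end and vanishes exactly when they are proportional, so I need $W(r_{0})=0$. Now $v_{-}$, hence $W(r_{0})$ as a linear functional of $(v_{\rm reg}(r_{0}),v_{\rm reg}'(r_{0}))$, is fixed by $f|_{[r_{0},\infty)}$, while $(v_{\rm reg}(r_{0}),v_{\rm reg}'(r_{0}))$ depends continuously on $f|_{[0,r_{0}]}$. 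Taking $r_{0}$ large and inserting on a subinterval of $(0,r_{0})$ a suitable oscillatory modification of the metric — one for which $\mu^{2}-Q$ has average strictly exceeding $\mu^{2}$ there — lets me vary continuously, over an interval of length $\ge\pi$, the phase accumulated by $v_{\rm reg}$ before $r_{0}$; so the direction $[v_{\rm reg}(r_{0}):v_{\rm reg}'(r_{0})]\in\mathbb{RP}^{1}$ sweeps all of $\mathbb{RP}^{1}$, in particular hits $[v_{-}(r_{0}):v_{-}'(r_{0})]$, and the intermediate value theorem gives an admissible extension with $W(r_{0})=0$. For that metric, the regular-at-origin solution $u$ is globally defined, nonzero and in $L^{2}(\mathbb{R}^{n},v_{g})$ with $\Delta_{g}u=-\mu^{2}u$, so $\mu^{2}\in\sigma_{\rm pp}(-\Delta_{g})$; with the first paragraph this proves (i) and (ii).

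The hard part, I expect, is this last matching step: controlling the range of $W(r_{0})$ under variations of the metric near the origin — equivalently, showing that the boundary datum of the regular solution at $r_{0}$ covers all of $\mathbb{RP}^{1}$ — is what genuinely uses the freedom in how the end is capped off. The Wigner--von Neumann asymptotic integration, with its sharp amplitude exponent $|A|/4$ responsible for the threshold $|A|>2$, is the main quantitative computation; the reduction to rotationally symmetric form and the smoothness bookkeeping at $0$ and at $r_{0}$ are routine.
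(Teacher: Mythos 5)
The paper does not actually prove Theorem 1.3 --- it only remarks that it ``can be proved by slightly modifying the proof of Theorem 1.8 in \cite{K2}'' --- so there is no in-text argument to compare against line by line. Your proposal is, however, a correct and essentially standard route to the statement, and its quantitative core checks out: for $g=dr^2+f^2g_{S^{n-1}(1)}$ one indeed has $\nabla dr=(f'/f)\widetilde g$, so (i) forces $\chi:=\tfrac{n-1}{2}\tfrac{f'}{f}=\tfrac{b-A\sin(2\mu r)}{2r}$, and $Q=\chi'+\chi^2=-\tfrac{\mu A\cos(2\mu r)}{r}+O(r^{-2})$ as you claim; the resonant averaged system $a'=\tfrac{iA}{4r}b$, $b'=-\tfrac{iA}{4r}a$ has exponents $\pm A/4$, so the Harris--Lutz/Levinson asymptotics $v_\pm\sim r^{\pm|A|/4}\cos(\mu r+\theta_\pm)$ are right, and the $L^2$ threshold is exactly $|A|>2$ while $b>|A|$ gives the stated lower bound on $\nabla dr$ and the validity of $(*_2)$ with $c=0$ (hence $\sigma_{\rm ess}=[0,\infty)$ together with $-\Delta_g\ge 0$). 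This is the same Wigner--von Neumann resonance mechanism that underlies the cited construction in \cite{K2}; the difference is one of organization --- \cite{K2} builds the end and the eigenfunction together by explicit amplitude/phase formulas, whereas you take the warping function as prescribed on $[r_0,\infty)$, extract the subordinate solution by asymptotic integration, and then shoot from the origin. Your version has the advantage of making transparent where each hypothesis ($b>|A|$ versus $|A|>2$) is used.

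The one step you leave genuinely schematic is the matching at $r_0$. It is not a gap in the sense of a step that fails: the Pr\"ufer angle $\theta(r_0)$ of the regular solution depends continuously on the cap $f|_{[0,r_0]}$, and by inserting a region where $\mu^2-Q$ is large (i.e.\ $f^{(n-1)/2}$ strongly concave, which is compatible with $f>0$, smoothness at $0$, and smooth gluing at $r_0$) you can increase $\theta(r_0)$ by more than $\pi$ along a continuous family, so the intermediate value theorem does produce a cap with $W(r_0)=0$. To make the write-up complete you should exhibit such a one-parameter family explicitly and verify the monotone phase gain via the Pr\"ufer equation $\theta'=\cos^2\theta+(\mu^2-Q)\sin^2\theta$; you should also record the (routine) facts that the regular solution corresponds to $v\sim c\,r^{(n-1)/2}$ at the origin and that a smooth $L^2$ solution of $\Delta_g u=-\mu^2 u$ with $\Delta_g u\in L^2$ lies in the domain of the (essentially self-adjoint) operator, so that $\mu^2$ is genuinely an eigenvalue.
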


Theorem $1.3$ can be proved by slightly modifying the proof of Theorem $1.8$ in \cite{K2}. 

By putting $w\equiv 0$ in Corollary $1.1$, we obtain the following:

\begin{cor}
Let $(M, g)$ be a noncompact connected complete Riemannian manifold and $E$ be an end with radial coordinates of $(M,g)$. 
Let $r$ denote ${\rm dist}\,(\partial E,*)$ on $E$. 
Assume that there exist constants $\widetilde{\alpha}_1 > 0$, $A_1 > 0$, $B_1 > 0$, $r_0 \ge 0$, $b \in \mathbb{R}$, $c \ge 0$ such that 
\begin{align*}
 \nabla dr \ge \frac{\widetilde{\alpha}_1}{r} \,\widetilde{g} 
 \quad {\rm on}~E(r_0,\infty). \ 
 -A_1 \le r \Big(\Delta_g r - c \Big) - b 
 \le B_1 \quad {\rm on}~E(r_0,\infty). 
\end{align*}
Assume that $ 2 \min \{ \widetilde{\alpha}_1, 1 \} > A_1 + B_1$. 
Then, $\sigma_{\rm ess}(-\Delta_g ) \supseteq [\frac{c^2}{4}, \infty)$ and  
\begin{align*}
 \sigma_{\rm pp}(-\Delta_g ) \cap \left( \frac{c^2}{4} \bigg\{ 1 
 + \frac{(A_1 + B_1)^2}{(2 - \varepsilon_1 - B_1)(\varepsilon_1 - A_1)} \bigg\}, ~ \infty \right) = \emptyset, 
\end{align*}
where $\varepsilon_1$ is the constant defined in Corollary $1.1$. 
\end{cor}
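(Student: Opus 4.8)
The plan is to obtain this statement as the special case $w\equiv 0$ of Corollary $1.1$. First I would record the effect of setting $w\equiv 0$: then $\nabla w=0$, so the drift Laplacian $\Delta_g+\nabla w$ is just the Laplace--Beltrami operator $\Delta_g$; moreover $e^{w}v_g=v_g$, so $L^2(M,e^{w}v_g)=L^2(M,v_g)$, and the self-adjoint operator $-\Delta_g-\nabla w$ appearing in Corollary $1.1$ is exactly $-\Delta_g$ on $L^2(M,v_g)$.

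Next I would verify that the hypotheses of Corollary $1.1$ hold here. Since $\partial w/\partial r=0$, the two-sided bound $-A_1\le r(\Delta_g r+\partial w/\partial r-c)-b\le B_1$ required there reduces precisely to the bound $-A_1\le r(\Delta_g r-c)-b\le B_1$ assumed in the present statement; the Hessian lower bound $\nabla dr\ge(\widetilde\alpha_1/r)\widetilde g$ and the balance condition $2\min\{\widetilde\alpha_1,1\}>A_1+B_1$ are identical in both statements; and the auxiliary constant $\varepsilon_1$ is, by definition, the same. The one discrepancy is that Corollary $1.1$ permits $c\in\mathbb{R}$ while here we take $c\ge 0$; this is a genuine sub-case, and since only $c^2$ enters both the essential-spectrum threshold and the eigenvalue-exclusion bound, no modification is needed.

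Finally I would simply quote the conclusion of Corollary $1.1$ in this situation: it gives $\sigma_{\rm ess}(-\Delta_g)\supseteq[\frac{c^2}{4},\infty)$ together with $\sigma_{\rm pp}(-\Delta_g)\cap\big(\frac{c^2}{4}\{1+(A_1+B_1)^2/((2-\varepsilon_1-B_1)(\varepsilon_1-A_1))\},\infty\big)=\emptyset$, which is exactly the asserted result. Since all the analytic content already resides in Theorem $1.1$ and Corollary $1.1$, there is no real obstacle; the only point requiring a line of verification — and it is immediate from $e^{0}=1$ — is that passing from the weighted space $L^2(M,e^{w}v_g)$ to the unweighted space $L^2(M,v_g)$ is harmless.
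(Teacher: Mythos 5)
Your proposal is correct and coincides with the paper's own derivation: Corollary 1.4 is stated there precisely as the specialization $w\equiv 0$ of Corollary 1.1, with the same identifications you make (the weighted measure $e^{w}v_g$ becoming $v_g$, the drift term vanishing, and $c\ge 0$ being a sub-case of $c\in\mathbb{R}$). Nothing further is needed.
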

Corollary $1.4$ is a generalization of results in author's earlier papers \cite{K2} and \cite{K3}.

Theorem $1.1$ will be obtained by modifying and {\it strengthening} the arguments in \cite{K3}. 

\vspace{1mm}

This paper is organized as follows. 
Section $2$, $3$, $4$, and $5$ are devoted to the proof of Theorem $1.1$. 
Section $6$ is concerned with the relationship between the radiation conditions and the growth condition $(*_3)$; we shall prove Lemma $8.1$ in \cite{K4} there. 
In Section $7$, we shall construct several Riemannian manifolds whose Laplacians satisfy the absence of embedded eigenvalues and besides the absolutely continuity, but their growth orders of metrics on ends are very complicated. 

\section{Analytic propositions}

In this section, we shall prepare some analytic propositions for the proof of Theorem $1.1$. 

First, let $c \in \mathbb{R}$ be a constant; we shall transform the operator $\Delta_g + \nabla w + \frac{c^2}{4}$ and the measure $e^w v_g$ into the new operator $L := \exp (\frac{c}{2}r) \circ (\Delta_g + \nabla w) \circ \exp (-\frac{c}{2}r) $ and new measure $e^{-cr+w} dv_g$, respectively: 
\begin{align*}
 \begin{CD}
L^2(E, e^w v_g) @>{ -(\Delta_g + \nabla w + \frac{c^2}{4})}>> L^2(E, e^w v_g) \\
 @V{\exp(\frac{c}{2}r)}VV      @VV{\exp(\frac{c}{2}r)}V  \\
L^2(E, e^{-cr+w} v_g)   @>>{ - L }>     L^2(E, e^{-cr+w} v_g)
 \end{CD}
\end{align*}
Here, note that the multiplying operator $\exp(\frac{c}{2}r) : L^2(E, e^w v_g) \ni h \mapsto \exp(\frac{c}{2}r) h \in L^2(E, e^{-cr+w} v_g)$ is a unitary operator. 

Then, note the following: 
\begin{lem}
Let $\gamma >0$ be a constant and $u$ be a $C^{\infty}$-function on $E$. 
We shall set $h(x):=\exp \left(-\frac{c}{2}r(x)\right) u(x)$ for $x \in E$. 
Then, the following conditions {\rm (i)} and {\rm (ii)} are equivalent~{\rm :}
\begin{enumerate}[{\rm (i)}]
\item $\displaystyle \liminf_{R\to \infty} R^{\gamma} \int_{S(R)} \Big\{ \bigg(\frac{\partial u}{\partial r} \bigg)^2 + u^2 \Big\} e^{-cr+w} dA = 0;$
\item $\displaystyle\liminf_{R\to \infty} R^{\gamma} \int_{S(R)} \Big\{ \bigg(\frac{\partial h}{\partial r} \bigg)^2 + h^2 \Big\} e^{w} dA = 0$.
\end{enumerate}
\end{lem}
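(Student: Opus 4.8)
The plan is to relate the two boundary integrals on $S(R)$ by a direct substitution of $h = e^{-\frac{c}{2}r}u$, showing that the integrand of (ii) equals $e^{-cr}$ times a quantity comparable (uniformly in $R$, up to multiplicative constants depending only on $c$) to the integrand of (i). Concretely, since $\frac{\partial r}{\partial r} = 1$, we have
\begin{align*}
 \frac{\partial h}{\partial r} = e^{-\frac{c}{2}r}\Bigl(\frac{\partial u}{\partial r} - \frac{c}{2}u\Bigr),
\end{align*}
so that
\begin{align*}
 \Bigl(\frac{\partial h}{\partial r}\Bigr)^2 + h^2
 = e^{-cr}\Bigl\{\Bigl(\frac{\partial u}{\partial r} - \frac{c}{2}u\Bigr)^2 + u^2\Bigr\}.
\end{align*}
Multiplying by $e^{w}$ and integrating over $S(R)$ gives
\begin{align*}
 \int_{S(R)}\Bigl\{\Bigl(\frac{\partial h}{\partial r}\Bigr)^2 + h^2\Bigr\}e^{w}\,dA
 = \int_{S(R)}\Bigl\{\Bigl(\frac{\partial u}{\partial r} - \frac{c}{2}u\Bigr)^2 + u^2\Bigr\}e^{-cr+w}\,dA.
\end{align*}

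Next I would observe that the quadratic form $Q(p,q) := (p - \frac{c}{2}q)^2 + q^2$ on $\mathbb{R}^2$ is positive definite, hence there are constants $0 < \kappa_1 \le \kappa_2$ depending only on $c$ with $\kappa_1(p^2+q^2) \le Q(p,q) \le \kappa_2(p^2+q^2)$ for all $(p,q)$; indeed the elementary inequality $(p-\frac{c}{2}q)^2 \le 2p^2 + \frac{c^2}{2}q^2$ gives the upper bound with $\kappa_2 = \max\{2, 1+\frac{c^2}{2}\}$, and a similar two-sided estimate (or eigenvalue computation) gives $\kappa_1$. Applying this pointwise with $p = \frac{\partial u}{\partial r}(x)$ and $q = u(x)$ under the integral sign yields
\begin{align*}
 \kappa_1 \int_{S(R)}\Bigl\{\Bigl(\frac{\partial u}{\partial r}\Bigr)^2 + u^2\Bigr\}e^{-cr+w}\,dA
 \le \int_{S(R)}\Bigl\{\Bigl(\frac{\partial h}{\partial r}\Bigr)^2 + h^2\Bigr\}e^{w}\,dA
 \le \kappa_2 \int_{S(R)}\Bigl\{\Bigl(\frac{\partial u}{\partial r}\Bigr)^2 + u^2\Bigr\}e^{-cr+w}\,dA.
\end{align*}

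Finally, I would multiply through by $R^{\gamma} > 0$ and take $\liminf_{R\to\infty}$. Since the two sides are nonnegative functions of $R$ bounded above and below by fixed positive multiples of the middle expression, the $\liminf$ of the middle is zero if and only if the $\liminf$ of either side is zero; this gives the equivalence of (i) and (ii). I do not expect any serious obstacle here — the only point requiring a line of care is the uniformity of the constants $\kappa_1, \kappa_2$ in $R$, which is immediate because they come from a fixed (R-independent) quadratic form, and the observation that $\frac{\partial}{\partial r}$ annihilates nothing unexpected since $r$ is the distance function so $\nabla r$ has unit length and $\frac{\partial r}{\partial r}=1$. The argument is purely pointwise-algebraic plus a monotonicity remark about $\liminf$, so no integration by parts, PDE theory, or geometry of $E$ beyond the definition of $r$ is needed.
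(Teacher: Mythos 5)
Your proposal is correct and follows essentially the same route as the paper: a pointwise algebraic identity converting one boundary integrand into the other (you substitute $h=e^{-\frac{c}{2}r}u$ into (ii), the paper substitutes $u=e^{\frac{c}{2}r}h$ into (i), yielding the same quadratic form $p^2 \mp cpq + (\tfrac{c^2}{4}+1)q^2$ up to sign of the cross term), followed by the positive-definiteness of that form with constants depending only on $c$. Your explicit two-sided bound handles both implications at once, whereas the paper proves one direction and invokes symmetry for the other; the substance is identical.
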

\begin{proof}
Direct computations show that 
\begin{align*}
  \Big\{ \bigg( \frac{\partial u}{\partial r} \bigg)^2 + u^2 \Big\} e^{-cr+w}  
= \Big\{ \bigg( \frac{\partial h}{\partial r} \bigg)^2 + c h \frac{\partial h}{\partial r} + \Big( \frac{c^2}{4} + 1 \Big) h^2 \Big\} e^{w}.
\end{align*}
If $c=0$, the assertion is trivial. 
Hence, assume that $c\neq 0$. 
Then, in general, there exists a constant $c_0(c)>0$, depending only on $c$, such that $X^2 + c XY + (\frac{c^2}{4} + 1)Y^2 \ge c_0(c)\{X^2+Y^2 \}$ holds for any $X,Y \in \mathbb{R}$. 
Therefore, (i) implies (ii). 
The contrary is proved in the same manner. 
\end{proof}
From Lemma $2.1$, we see that it suffices to prove Theorem $1.1$ for $-L$, $e^{-cr+w} A$, and $e^{-cr+w} v_g$ in stead of $-( \Delta_g + \nabla w + \frac{c^2}{4})$, $e^w A$, and $e^w v_g$, respectively.

Now, let $\lambda > 0$ be a constant and $u$ be a solution of 
\begin{align*}
  Lu + \lambda u= 0 \quad {\rm on}~~E,
\end{align*}
and assume that (i) in Lemma $2.1$ holds. 
A direct computation shows that 
\begin{align}
& \Delta_g u + \langle \nabla w - c \nabla r, \nabla u \rangle 
  - \frac{c}{2} q_{\star} u + \lambda u = 0 \quad {\rm on}~~E; \\
& q_{\star} := \Delta_g r + \frac{\partial w}{\partial r} - c.
\end{align}
Let $\rho (t)$ be a $C^{\infty}$ function of $t \in [r_0,\infty)$, and put 
\begin{align*}
  v(x) := \exp \bigl(\rho (r(x)) \bigr)u(x) \qquad {\rm for}~~x \in E.
\end{align*}
Then, direct computations show that
\begin{align}
& \Delta_g v - \big( 2 \rho '(r) + c \big) \frac{\partial v}{\partial r} 
  + \langle \nabla w, \nabla v \rangle 
  + \Big\{ q_1 - \Big( \rho'(r) + \frac{c}{2} \Big)q_{\star} + \lambda \Big\} 
  v = 0,\\
& q_1 := -\rho''(r) + (\rho'(r))^2. \notag
\end{align}

In order to prove Theorem $1.1$, we shall prepare three Propositions below:
\begin{prop}
For any $\psi \in C^{\infty}(E)$ and $r_0 \le s< t$, we have
\begin{align*}
& \int_{E(s, t)} \psi \left\{ |\nabla v|^2 - 
  \Big( \lambda + q_1 - (\rho'(r) + \frac{c}{2}) q_{\star} \Big) v^2 \right\} 
  e^{-cr+w} \,dv_g \\
=
& \left( \int_{S(t)}-\int_{S(s)}\right ) 
  \psi \frac{\partial v}{\partial r} v e^{-cr+w} \,dA 
  - \int_{E(s, t)} \langle \nabla \psi + 2 \psi \rho '(r) \nabla r , \nabla v 
  \rangle v e^{-cr+w} \,dv_g.
\end{align*}
\end{prop}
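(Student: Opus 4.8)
The plan is to derive this identity as an integration-by-parts (Green-type) formula applied to equation $(2.3)$, the PDE satisfied by $v = e^{\rho(r)} u$. First I would rewrite $(2.3)$ in divergence form with respect to the weighted measure $e^{-cr+w}\,dv_g$. The key observation is that the first-order terms $-(2\rho'(r)+c)\frac{\partial v}{\partial r} + \langle \nabla w, \nabla v\rangle$ are exactly designed so that the operator
\begin{align*}
 \Delta_g v - \big(2\rho'(r)+c\big)\frac{\partial v}{\partial r} + \langle \nabla w, \nabla v\rangle
\end{align*}
equals $e^{cr-w+2\rho}\,\mathrm{div}\big(e^{-cr+w-2\rho}\nabla v\big)$, or more precisely that it is the weighted Laplacian associated with the measure $e^{-cr+w}\,dv_g$ after the conjugation by $e^{\rho}$. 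Concretely, since $\langle\nabla w - c\nabla r, \nabla v\rangle = \langle \nabla(w-cr),\nabla v\rangle$ and $-2\rho'(r)\frac{\partial v}{\partial r} = -2\langle \rho'(r)\nabla r,\nabla v\rangle$, one checks
\begin{align*}
 e^{cr-w}\,\mathrm{div}\big(e^{-cr+w}\nabla v\big) = \Delta_g v + \langle \nabla w - c\nabla r,\nabla v\rangle,
\end{align*}
so the extra $-2\rho'(r)\frac{\partial v}{\partial r}$ term is the mismatch one tracks separately.

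Next I would multiply $(2.3)$ by $\psi v e^{-cr+w}$ and integrate over $E(s,t)$. The Laplacian-type term, after using the divergence identity above and the product rule $\mathrm{div}\big(\psi v e^{-cr+w}\nabla v\big) = \psi v e^{-cr+w}\,\mathrm{div}(e^{-cr+w}\nabla v)\cdot e^{cr-w} \cdot(\ldots)$ — more carefully, expanding $\mathrm{div}\big(\psi v e^{-cr+w}\nabla v\big)$ and integrating — produces three contributions: the boundary integral $\big(\int_{S(t)} - \int_{S(s)}\big)\psi\frac{\partial v}{\partial r} v\, e^{-cr+w}\,dA$ from the divergence theorem (the unit outward normal on $S(t)$ is $\nabla r$, so $\langle\nabla v,\nabla r\rangle = \partial v/\partial r$), the term $-\int_{E(s,t)}\psi|\nabla v|^2 e^{-cr+w}\,dv_g$, and the term $-\int_{E(s,t)}\langle\nabla\psi, \nabla v\rangle v\, e^{-cr+w}\,dv_g$ coming from differentiating $\psi$. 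The leftover first-order term $-(2\rho'(r)+c)\frac{\partial v}{\partial r}$ contributes, after multiplication by $\psi v e^{-cr+w}$ and noting that the $c$-part is already absorbed into the weight $e^{-cr+w}$, the piece $-\int_{E(s,t)} 2\psi\rho'(r)\langle\nabla r,\nabla v\rangle v\, e^{-cr+w}\,dv_g$, which combines with $-\int\langle\nabla\psi,\nabla v\rangle v\,\ldots$ to give exactly $-\int_{E(s,t)}\langle\nabla\psi + 2\psi\rho'(r)\nabla r, \nabla v\rangle v\, e^{-cr+w}\,dv_g$. Finally, the zeroth-order term $\big\{q_1 - (\rho'(r)+\tfrac{c}{2})q_\star + \lambda\big\}v$ multiplied by $\psi v e^{-cr+w}$ and integrated gives the remaining bracketed term on the left-hand side. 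Collecting and rearranging yields the stated identity.

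The routine bookkeeping is the main thing to get right — in particular the careful matching of the $c$-terms between the PDE's first-order coefficient and the exponential weight $e^{-cr+w}$ — but there is no genuine obstacle; it is a one-line Green's formula once the operator in $(2.3)$ is recognized as self-adjoint (up to zeroth order) with respect to $e^{-cr+w}\,dv_g$. The only point requiring a word of justification is the legitimacy of the integration by parts on the relatively compact region $E(s,t)$, which is immediate since $v$, $\psi$, and the weight are all smooth there and $\overline{E(s,t)}$ has smooth boundary $S(s)\cup S(t)$ (for $s,t$ regular values of $r$, which holds for all $s,t > r_0$ since $r$ has no critical points on $E$ by the radial-coordinate hypothesis).
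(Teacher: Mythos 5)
Your proof is correct and is exactly the paper's argument: multiply equation $(3)$ by $\psi v$ and integrate over $E(s,t)$ against $e^{-cr+w}\,dv_g$, using Green's formula (the paper states this in one line). Your identification of $\Delta_g v + \langle \nabla w - c\nabla r, \nabla v\rangle$ as the divergence-form operator for the weight $e^{-cr+w}$, with the $-2\rho'(r)\partial v/\partial r$ term tracked separately, is precisely the right bookkeeping.
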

\begin{proof}
We shall multiply the equation $(3)$ by $\psi v$ and integrate it over $E(s, t)$ with respect to the measure $e^{-cr+w} v_g$.
Then, the Green's formula yields Proposition $2.1$. 
\end{proof}
\begin{prop}
For any $r_0 \le s< t$ and $\gamma \in \mathbb{R}$, we have
\begin{align*}
& \int_{S(t)} r^{\gamma} \bigg\{ \bigg(\frac{\partial v}{\partial r} \bigg)^2
  - \frac{1}{2}|\nabla v|^2 + \frac{1}{2}(\lambda + q_1)v^2 \bigg\} 
  e^{-cr+w}\,dA \\
& + \int_{S(s)} r^{\gamma} \bigg\{ \frac{1}{2}|\nabla v|^2 - \bigg( \frac{\partial v}{\partial r} \bigg)^2 - \frac{1}{2}(\lambda + q_1)v^2 \bigg\} 
  e^{-cr+w} \,dA \\
= 
& \int_{E(s, t)} r^{\gamma -1} \Big\{ r(\nabla dr)(\nabla v,\nabla v) 
  - \frac{1}{2} ( \gamma + rq_{\star} ) \widetilde{g}(\nabla v, \nabla v) \Big\}  e^{-cr+w} \, dv_g \\
& + \int_{E(s, t)} r^{\gamma -1} \Big\{ \frac{1}{2} ( \gamma - rq_{\star} ) 
  + 2 \rho'(r) r \Big\} \bigg( \frac{\partial v}{\partial r} \bigg)^2 
  e^{-cr+w} \, dv_g \\
& + \int_{E(s, t)} r^{\gamma -1} ~ rq_{\star}~ \Big( \rho'(r) + \frac{c}{2} \Big) \frac{\partial v}{\partial r} v e^{-cr+w} \, dv_g \\
& + \frac{1}{2} \int_{E(s, t)} r^{\gamma -1} \Big\{ (\lambda + q_1)(\gamma + rq_{\star}) + r \frac{\partial q_1}{\partial r} \Big\} v^2 e^{-cr+w} \, dv_g .
\end{align*}
\end{prop}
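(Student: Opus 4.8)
The plan is to derive Proposition 2.2 as a Rellich--Pohozaev--type integral identity, obtained from equation $(3)$ by testing it against the radial vector field $X:=r^{\gamma}\nabla r$, equivalently by multiplying $(3)$ by $r^{\gamma}\,\partial v/\partial r = \langle X,\nabla v\rangle$ and integrating over the collar $E(s,t)$ against the weighted measure. It is cleanest to absorb the weight at the outset: setting $d\mu:=e^{-cr+w}\,v_g$ and $\Delta_{\mu}(\cdot):=\mathrm{div}_{\mu}(\nabla(\cdot)) = \Delta_g(\cdot)+\langle\nabla w-c\nabla r,\nabla(\cdot)\rangle$, equation $(3)$ reads
\begin{align*}
 \Delta_{\mu}v-2\rho'(r)\frac{\partial v}{\partial r}+\Big\{q_1-\big(\rho'(r)+\tfrac{c}{2}\big)q_{\star}+\lambda\Big\}v=0 ,
\end{align*}
while the geometry enters only through the identities $\mathrm{div}_{\mu}(\nabla r)=\Delta_g r+\frac{\partial w}{\partial r}-c=q_{\star}$ and $(\nabla X)(\nabla v,\nabla v)=\gamma r^{\gamma-1}(\partial v/\partial r)^2+r^{\gamma}(\nabla dr)(\nabla v,\nabla v)$, $\mathrm{div}_{\mu}X=r^{\gamma-1}(\gamma+rq_{\star})$.

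I would then use the pointwise stress--energy identity for the weighted Laplacian: for the field $Y:=\langle X,\nabla v\rangle\nabla v-\tfrac12|\nabla v|^2X$ one has
\begin{align*}
 \mathrm{div}_{\mu}Y=\langle X,\nabla v\rangle\,\Delta_{\mu}v+(\nabla X)(\nabla v,\nabla v)-\tfrac12|\nabla v|^2\,\mathrm{div}_{\mu}X .
\end{align*}
Inserting $X=r^{\gamma}\nabla r$, replacing $\Delta_{\mu}v$ by the lower-order expression from the transformed equation, and expanding $|\nabla v|^2=\widetilde{g}(\nabla v,\nabla v)+(\partial v/\partial r)^2$ in the $\mathrm{div}_{\mu}X$ term, one gets a pointwise identity whose right-hand side is exactly the sum of the integrands in the first three displayed lines on the right of Proposition 2.2 --- the $r(\nabla dr)(\nabla v,\nabla v)-\tfrac12(\gamma+rq_{\star})\widetilde{g}(\nabla v,\nabla v)$ term, the $(\partial v/\partial r)^2$ term with coefficient $\tfrac12(\gamma-rq_{\star})+2\rho'(r)r$, and the cross term $rq_{\star}(\rho'(r)+\tfrac c2)\,\partial_r v\cdot v$ --- together with one extra term $-r^{\gamma}(q_1+\lambda)\,v\,\partial v/\partial r$ coming from the $\langle X,\nabla v\rangle\,\lambda v$ and $\langle X,\nabla v\rangle q_1 v$ contributions.

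The extra term is disposed of by the elementary move $v\,\partial v/\partial r=\tfrac12\,\partial(v^2)/\partial r$ and absorbing it into a divergence. Since $\lambda$ is constant,
\begin{align*}
 \mathrm{div}_{\mu}\Big(\tfrac12 r^{\gamma}(q_1+\lambda)v^2\nabla r\Big)=\tfrac12\frac{\partial}{\partial r}\big(r^{\gamma}(q_1+\lambda)\big)v^2+r^{\gamma}(q_1+\lambda)\,v\frac{\partial v}{\partial r}+\tfrac12 r^{\gamma}(q_1+\lambda)q_{\star}v^2 ,
\end{align*}
so adding $\tfrac12 r^{\gamma}(q_1+\lambda)v^2\nabla r$ to $Y$ converts the leftover first-order term into the $v^2$-integrand $\tfrac12 r^{\gamma-1}\{(\lambda+q_1)(\gamma+rq_{\star})+r\,\partial q_1/\partial r\}$ of the last line of Proposition 2.2. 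It then remains to integrate the identity $\mathrm{div}_{\mu}\big(Y+\tfrac12 r^{\gamma}(q_1+\lambda)v^2\nabla r\big)=(\text{the four right-hand integrands})$ over $E(s,t)$ with respect to $d\mu$ and apply the divergence theorem: the boundary of the collar is $S(s)\cup S(t)$ with outward conormals $\mp\nabla r$, and since $\langle Y,\nabla r\rangle=r^{\gamma}\{(\partial v/\partial r)^2-\tfrac12|\nabla v|^2\}$, the boundary contribution is precisely the left-hand side of Proposition 2.2.

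The only genuine pitfall --- and the step I would check most carefully --- is to perform every integration by parts with respect to $\mathrm{div}_{\mu}$ rather than the Riemannian $\mathrm{div}$, since it is exactly the weight $e^{-cr+w}$ that turns $\Delta_g r$ into $q_{\star}$ and so produces the ubiquitous combination $\gamma+rq_{\star}$; a single sign or weight slip here propagates into several terms at once. Regularity causes no trouble: $u$ solves an elliptic equation and is therefore smooth, $\rho$ is $C^{\infty}$, hence $v=e^{\rho(r)}u$ is smooth up to $\overline{E(s,t)}$, so all the manipulations on the compact collars are legitimate, and --- unlike in the later steps of the proof of Theorem 1.1 --- the growth hypothesis $(*_3)$ plays no role in this purely local identity.
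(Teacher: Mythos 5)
Your proposal is correct and is essentially the paper's own argument: the author likewise multiplies equation $(3)$ by $\langle\nabla r,\nabla v\rangle$, invokes the same stress--energy identity ${\rm div}\big(\frac{\partial v}{\partial r}\nabla v-\frac12|\nabla v|^2\nabla r\big)$, then multiplies by $r^{\gamma}e^{-cr+w}$ and disposes of the leftover $-r^{\gamma}(\lambda+q_1)v\,\partial v/\partial r$ term via exactly the divergence you write down. The only (cosmetic) difference is that you package the weight into $\mathrm{div}_{\mu}$ from the outset, whereas the paper carries $e^{-cr+w}$ explicitly and applies $f\,{\rm div}X={\rm div}(fX)-Xf$ afterward.
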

\begin{proof}
We shall multiply the equation $(3)$ by $\langle \nabla r, \nabla v \rangle$. 
Then, from 
\begin{align*}
  \langle \nabla r, \nabla v \rangle \Delta_g v 
& = \langle \nabla r, \nabla v \rangle {\rm div}(\nabla v) \\
& = {\rm div} \big( \langle \nabla r, \nabla v \rangle \nabla v \big) 
   - \langle \nabla_{\nabla v} (\nabla r), \nabla v \rangle 
   - \langle \nabla r, \nabla_{\nabla v} (\nabla v) \rangle ;\\
 \langle \nabla r, \nabla_{\nabla v} (\nabla v) \rangle 
& = \langle \nabla_{\nabla r} (\nabla v), \nabla v \rangle 
  = \frac{1}{2} (\nabla r) (|\nabla v|^2) \\ 
& = \frac{1}{2} {\rm div} (|\nabla v|^2 \nabla r) 
    - \frac{1}{2} |\nabla v|^2 \Delta_g r,
\end{align*}
we have
\begin{align*}
  \langle \nabla r, \nabla v \rangle \Delta_g v 
= {\rm div} \Big( \langle \nabla r, \nabla v \rangle \nabla v - \frac{1}{2} |\nabla v|^2 \nabla r \Big) - (\nabla dr)(\nabla v,\nabla v) + \frac{1}{2} |\nabla v|^2 \Delta_g r .
\end{align*}
Therefore, we obtain 
\begin{align*}
& {\rm div} \bigg( \frac{\partial v}{\partial r} \nabla v 
  - \frac{1}{2} |\nabla v|^2 \nabla r \bigg) - (\nabla dr)(\nabla v,\nabla v) 
  + \frac{1}{2} |\nabla v|^2 \Delta_g r - (2\rho'(r) + c) 
  \bigg( \frac{\partial v}{\partial r} \bigg)^2 \notag \\
& \quad + \langle \nabla w, \nabla v \rangle \frac{\partial v}{\partial r} 
  + \Big\{ \lambda -\rho''(r) + (\rho'(r))^2 
  - \Big( \rho'(r) + \frac{c}{2} \Big) q_{\star} \Big\} 
  v\frac{\partial v}{\partial r} =0 . 
\end{align*}
We shall multiply the equation above by $r^{\gamma} e^{-cr+w}$ and use a general formula, $f{\rm div}X= {\rm div}(fX) - Xf$. 
After that, integrating it over $E(s, t)$ with respect to $v_g$, we obtain, by the divergence theorem, 
\begin{align}
& \left( \int_{S(t)} - \int_{S(s)} \right) r^{\gamma} 
  \bigg\{ \bigg( \frac{\partial v}{\partial r} \bigg)^2 
  - \frac{1}{2}|\nabla v|^2 \bigg\} e^{-cr+w} \, dA \notag \\
=
& \int_{E(s, t)} r^{\gamma -1} \bigg\{ r(\nabla dr)(\nabla v, \nabla v) 
  + \gamma \bigg( \frac{\partial v}{\partial r} \bigg)^2 \bigg\} 
  e^{-cr+w} \, dv_g \notag \\
& - \frac{1}{2} \int_{E(s, t)} r^{\gamma -1} \big\{ \gamma + rq_{\star} \big\} 
  |\nabla v|^2 e^{-cr+w} \, dv_g \notag \\
& + 2 \int_{E(s, t)} r^{\gamma} \rho'(r) 
  \bigg( \frac{\partial v}{\partial r} \bigg)^2 e^{-cr+w} \, dv_g \notag \\
& + \int_{E(s, t)} r^{\gamma -1} \Big( \rho'(r) + \frac{c}{2} \Big) ~ 
  rq_{\star} ~ \frac{\partial v}{\partial r} v e^{-cr+w} \, dv_g \notag \\
& - \int_{E(s, t)} r^{\gamma} \{ \lambda + q_1 \} \frac{\partial v}{\partial r} 
  v e^{-cr+w} \, dv_g. 
\end{align}
Here, the integrand of the last term of $(4)$ is equal to 
\begin{align*}
& - \frac{1}{2} r^{\gamma} \{ \lambda + q_1 \} 
  \langle \nabla r, \nabla (v)^2 \rangle e^{-cr+w} \\
= 
& - \frac{1}{2} {\rm div} \Big( r^{\gamma} (\lambda + q_1) v^2 e^{-cr+w} \nabla r \Big) \\
& + \frac{1}{2} r^{\gamma-1} \Big\{ (\lambda + q_1) ~ rq_{\star} + \gamma (\lambda + q_1) + r \frac{\partial q_1}{\partial r} \Big\} v^2 e^{-cr+w}.
\end{align*}
Hence, integrating this equation over $E(s, t)$ with respect to $v_g$, we have 
\begin{align}
& - \int_{E(s, t)} r^{\gamma} \{ \lambda + q_1 \} \frac{\partial v}{\partial r} 
  v e^{-cr+w} \, dv_g \notag \\
=
& - \frac{1}{2} \left( \int_{S(t)} - \int_{S(s)} \right) r^{\gamma} (\lambda + q_1) v^2 e^{-cr+w} dA \notag \\
& + \frac{1}{2} \int_{E(s, t)} r^{\gamma - 1} \Big\{ (\lambda + q_1)( \gamma + rq_{\star}) + r \frac{\partial q_1}{\partial r} \Big\} v^2 e^{-cr+w} \, dv_g. 
\end{align}
Thus, substituting $(5)$ into $(4)$, we obtain Proposition $2.2$. 
\end{proof}
\begin{prop}
For any $\gamma \in \mathbb{R}$, $\varepsilon \in \mathbb{R}$, and $0\le s<t$, we have
\begin{align*}
&\int_{S(t)} r^{\gamma} \bigg\{ \bigg( \frac{\partial v}{\partial r} \bigg)^2 
 + \frac{1}{2}(\lambda + q_1)v^2 - \frac{1}{2} |\nabla v|^2 
 + \frac{\gamma - \varepsilon + b}{2r} \frac{\partial v}{\partial r} v \bigg\} 
 e^{-cr+w} \, dA \\
&+ \int_{S(s)} r^{\gamma} \bigg\{ \frac{1}{2} |\nabla v|^2 
 - \frac{1}{2}(\lambda + q_1)v^2 
 - \bigg( \frac{\partial v}{\partial r} \bigg)^2   
 - \frac{\gamma - \varepsilon + b}{2r} \frac{\partial v}{\partial r} v \bigg\} 
 e^{-cr+w} \, dA \\
= 
&\int_{E(s, t)} r^{\gamma -1} \bigg\{ r(\nabla dr)(\nabla v,\nabla v) 
 - \frac{1}{2} ( \varepsilon + r q_{\star} - b ) 
 \widetilde{g}(\nabla v, \nabla v) \bigg\} e^{-cr+w} \, dv_g \\
&+ \int_{E(s, t)} r^{\gamma -1} \bigg\{ \gamma 
 - \frac{1}{2} ( \varepsilon + rq_{\star} - b ) + 2 \rho'(r) \,r \bigg\} 
 \bigg( \frac{\partial v}{\partial r} \bigg)^2 e^{-cr+w} \, dv_g \\
&+ \int_{E(s, t)} r^{\gamma -1} \bigg\{ (\gamma - \varepsilon + b)\rho'(r) 
 + \frac{(\gamma - 1)(\gamma - \varepsilon + b)}{2r} 
 + \Big( \rho'(r) + \frac{c}{2} \Big) rq_{\star} \bigg\} 
 \frac{\partial v}{\partial r} v e^{-cr+w} \, dv_g \\
&+ \frac{1}{2} \int_{E(s, t)} r^{\gamma -1} \bigg\{ 
 (\lambda + q_1)(\varepsilon + rq_{\star} - b) + r \frac{\partial q_1}{\partial r}   + (\gamma - \varepsilon + b) \Big( \rho'(r) + \frac{c}{2} \Big) q_{\star} \bigg\}
 v^2 e^{-cr+w} \, dv_g .
\end{align*}
\end{prop}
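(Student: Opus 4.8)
The plan is to build Proposition~2.3 as an algebraic combination of Proposition~2.1 and Proposition~2.2, with a suitably chosen test function $\psi$, plus an auxiliary integration-by-parts identity for the mixed term $\frac{\partial v}{\partial r} v$. The shape of the statement makes the strategy transparent: Proposition~2.2 already produces the boundary terms $r^{\gamma}\{(\partial v/\partial r)^2 - \tfrac12|\nabla v|^2 + \tfrac12(\lambda+q_1)v^2\}$ and the bulk terms involving $r(\nabla dr)(\nabla v,\nabla v) - \tfrac12(\gamma + rq_\star)\widetilde g(\nabla v,\nabla v)$, while the target statement has instead $-\tfrac12(\varepsilon + rq_\star - b)\widetilde g(\nabla v,\nabla v)$ in the bulk and an extra boundary term $\frac{\gamma-\varepsilon+b}{2r}\frac{\partial v}{\partial r}v$. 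The difference between $\tfrac12(\gamma + rq_\star)$ and $\tfrac12(\varepsilon + rq_\star - b)$ is exactly $\tfrac12(\gamma - \varepsilon + b)$, a constant multiple of $\widetilde g(\nabla v,\nabla v) = |\nabla v|^2 - (\partial v/\partial r)^2$; this is precisely what Proposition~2.1 supplies (with $\psi$ constant the identity relates $\int \psi|\nabla v|^2$ to a boundary term in $\frac{\partial v}{\partial r}v$ and a bulk term in $\langle\nabla v,\nabla r\rangle v = \frac{\partial v}{\partial r}v$, after absorbing $2\psi\rho'$).

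Concretely, I would first take Proposition~2.1 with $\psi := \tfrac12(\gamma - \varepsilon + b)\, r^{\gamma-1}$ and rewrite the resulting identity so that $\tfrac12(\gamma-\varepsilon+b) r^{\gamma-1}|\nabla v|^2$ appears on the left; its gradient contributes $\langle\nabla\psi,\nabla v\rangle v = \tfrac12(\gamma-\varepsilon+b)(\gamma-1) r^{\gamma-2}\frac{\partial v}{\partial r}v$ to the bulk, the term $2\psi\rho'(r)\langle\nabla r,\nabla v\rangle v = (\gamma-\varepsilon+b)\rho'(r) r^{\gamma-1}\frac{\partial v}{\partial r}v$ to the bulk, the boundary term $\tfrac12(\gamma-\varepsilon+b) r^{\gamma-1}\frac{\partial v}{\partial r}v$ on each of $S(t)$, $S(s)$, and the potential term $-\tfrac12(\gamma-\varepsilon+b) r^{\gamma-1}(\lambda + q_1 - (\rho'+\tfrac c2)q_\star)v^2$ to the bulk. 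Adding this identity to that of Proposition~2.2 converts the $\widetilde g$-coefficient from $\tfrac12(\gamma + rq_\star)$ to $\tfrac12(\varepsilon + rq_\star - b)$, adjusts the $(\partial v/\partial r)^2$-coefficient from $\tfrac12(\gamma - rq_\star) + 2\rho' r$ to $\gamma - \tfrac12(\varepsilon + rq_\star - b) + 2\rho' r$ (one checks $\tfrac12(\gamma - rq_\star) + \tfrac12(\gamma-\varepsilon+b) = \gamma - \tfrac12(\varepsilon + rq_\star - b)$), produces the new boundary terms $\frac{\gamma-\varepsilon+b}{2r}\frac{\partial v}{\partial r}v$, and merges all the $\frac{\partial v}{\partial r}v$ and $v^2$ bulk contributions. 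The $q_\star$ part of the $v^2$ potential produced by Proposition~2.1 combines with the $(\lambda+q_1)rq_\star$ term from Proposition~2.2 to give exactly the $(\lambda+q_1)(\varepsilon + rq_\star - b)$ and $(\gamma-\varepsilon+b)(\rho'+\tfrac c2)q_\star$ terms in the target.

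The remaining work is bookkeeping: collecting coefficients of $\frac{\partial v}{\partial r}v$ to see that $\tfrac12\gamma(\gamma-\varepsilon+b) r^{\gamma-2}$ from Proposition~2.2's $v^2$-rewrite (the $\gamma(\lambda+q_1)$ piece re-expressed — actually this is the $\frac{(\gamma-1)(\gamma-\varepsilon+b)}{2r}$ term) combines with the $\rho'$-weighted term and the $(\rho'+\tfrac c2)rq_\star$ term into the single bracket displayed in Proposition~2.3's third bulk integral, and checking the $r\frac{\partial q_1}{\partial r}$ term carries through unchanged. I expect the main obstacle to be purely clerical: keeping the powers of $r$ straight (several terms shift between $r^{\gamma}$, $r^{\gamma-1}$, $r^{\gamma-2}$ under the product rule $\nabla r^{\gamma-1} = (\gamma-1)r^{\gamma-2}\nabla r$) and making sure every mixed term $\frac{\partial v}{\partial r}v$ is grouped on the correct side with the correct sign, since Proposition~2.1 and Proposition~2.2 present their boundary and bulk terms with different sign conventions. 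No new geometric or analytic input is needed beyond Green's/divergence theorem already invoked in the two earlier proofs, so once the test function $\psi = \tfrac12(\gamma-\varepsilon+b)r^{\gamma-1}$ is fixed the identity follows by direct computation.
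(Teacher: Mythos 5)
Your proposal is correct and coincides with the paper's own proof: the author likewise substitutes $\psi = \frac{\gamma-\varepsilon+b}{2}\,r^{\gamma-1}$ into Proposition 2.1 and adds the result to Proposition 2.2. The bookkeeping you describe is exactly what is needed, and no further input is required.
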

\begin{proof}
Substitute $\psi = \frac{\gamma-\varepsilon+b}{2} r^{\gamma -1}$ into the equation in Proposition $2.1$ and adding it to the equation in Proposition $2.2$, we obtain Proposition $2.3$. 
\end{proof}
\begin{lem}
For any $\beta \in \mathbb{R}$, we have 
\begin{align*}
& \int_{S(t)} r^{\beta} v^2 e^{-cr+w} \,dA - \int_{S(s)} r^{\beta} v^2 e^{-cr+w} \, dA \notag \\
= 
& \int_{E(s, t)} r^{\beta} \bigg\{ \Big( q_{\star} + \frac{\beta}{r} \Big) v^2
   + 2 v \frac{\partial v}{\partial r} \bigg\} e^{-cr+w} \,dv_g.
\end{align*}
\end{lem}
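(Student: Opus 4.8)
The plan is to derive this identity exactly as in the proofs of Propositions~2.1--2.3, namely by choosing a suitable test multiplier, applying the divergence theorem on $E(s,t)$, and then simplifying using the equation satisfied by $v$. Here the natural choice is to start from the elementary pointwise identity
\begin{align*}
 \frac{\partial}{\partial r}\Big( r^{\beta} v^2 e^{-cr+w} \Big)
 = r^{\beta} e^{-cr+w} \Big\{ \frac{\beta}{r} v^2 + 2 v\frac{\partial v}{\partial r} + v^2 \frac{\partial}{\partial r}\big( -cr + w \big) \Big\},
\end{align*}
together with the observation that, since $r^{\beta} v^2 e^{-cr+w}\,\nabla r$ is a vector field whose divergence picks up the extra term $r^{\beta} v^2 e^{-cr+w}\,\Delta_g r$, one has
\begin{align*}
 {\rm div}\Big( r^{\beta} v^2 e^{-cr+w}\,\nabla r \Big)
 = \frac{\partial}{\partial r}\Big( r^{\beta} v^2 e^{-cr+w} \Big) + r^{\beta} v^2 e^{-cr+w}\,\Delta_g r .
\end{align*}
Adding the two displays and recalling $q_{\star} = \Delta_g r + \frac{\partial w}{\partial r} - c$ (so that $\Delta_g r + \frac{\partial}{\partial r}(-cr+w) = q_{\star}$) gives the clean pointwise formula
\begin{align*}
 {\rm div}\Big( r^{\beta} v^2 e^{-cr+w}\,\nabla r \Big)
 = r^{\beta} e^{-cr+w} \Big\{ \Big( q_{\star} + \frac{\beta}{r} \Big) v^2 + 2 v\frac{\partial v}{\partial r} \Big\} .
\end{align*}

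Next I would integrate this identity over $E(s,t)$ against $v_g$ and apply the divergence theorem. The boundary of $E(s,t)$ consists of $S(t)$ with outward unit normal $\nabla r$ and $S(s)$ with outward unit normal $-\nabla r$, and $\langle \nabla r,\nabla r\rangle = 1$, so the boundary term is exactly $\int_{S(t)} r^{\beta} v^2 e^{-cr+w}\,dA - \int_{S(s)} r^{\beta} v^2 e^{-cr+w}\,dA$. This matches the left-hand side, and the right-hand side is the integral of the pointwise formula above, which is precisely the claimed right-hand side. Note that, unlike Propositions~2.1--2.3, the equation $(3)$ for $v$ is not actually needed here; this is a pure integration-by-parts (Rellich-type) identity, which is why it is stated separately as a lemma and presumably used repeatedly in Sections~3--5 to absorb boundary contributions and $v^2$-terms.

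There is essentially no obstacle: the only point requiring a little care is the bookkeeping of the exponential weight, i.e. making sure that the $r$-derivative of $e^{-cr+w}$ combines with $\Delta_g r$ to produce exactly $q_{\star}$ rather than $\Delta_g r + \frac{\partial w}{\partial r}$ or some other variant, and that no stray factor of $c$ is left over. One should also remark that $r$ is smooth on $E(r_0,\infty)$ so the computation is legitimate there; near $\partial E$ the statement is for $0\le s<t$ and $r={\rm dist}(\partial E,*)$ is Lipschitz and smooth off the cut locus, but since $E$ is an end with radial coordinates the map $\exp_{\partial E}^{\perp}$ is a diffeomorphism, so $r$ is in fact smooth on all of $E$ and the divergence theorem applies without modification.
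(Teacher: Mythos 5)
Your proof is correct and is essentially the paper's own argument: the paper likewise computes ${\rm div}\big( r^{\beta} v^2 e^{-cr+w}\,\nabla r \big) = r^{\beta}\big\{ \big( q_{\star} + \tfrac{\beta}{r} \big) v^2 + 2 v \tfrac{\partial v}{\partial r} \big\} e^{-cr+w}$ and integrates over $E(s,t)$ via the divergence theorem. Your additional remarks (that equation $(3)$ is not needed, and that $r$ is smooth on $E$ because $\exp_{\partial E}^{\perp}$ is a diffeomorphism) are accurate but not required.
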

\begin{proof}
A direct computation shows that
\begin{align*}
  {\rm div}( r^{\beta} v^2 e^{-cr+w} \nabla r)
= r^{\beta} \Big\{ \Big( q_{\star} + \frac{\beta}{r} \Big) v^2
  + 2v\frac{\partial v}{\partial r} \Big\} e^{-cr+w}.
\end{align*}
Integrating this equation with respect to $v_g$ over $E(s, t)$, we obtain Lemma $2.2$. 
\end{proof}
\section{Faster than polynomial decay}
The proof of Theorem $1.1$ will be accomplished by following three procedures: (1) to show faster than polynomial decay; (2) to show faster than exponential decay; (3) to show vanishing on a neighborhood of infinity. 
Section $3$, $4$, and $5$ will be devoted to these procedures $(1)$, $(2)$, and $(3)$, respectively. 
\begin{thm}
Let $(M, g)$ be a noncompact Riemannian manifold and $E$ be an end with radial coordinates of $(M,g)$. 
Let $r$ denote ${\rm dist}(\partial E, *)$ on $E$. 
Assume that there exist constants $\widetilde{\alpha}_1 > 0$, $A_1 > 0$, $B_1 > 0$, $r_0 \ge 0$, $b \in \mathbb{R}$, and $c \in \mathbb{R}$ such that 
\begin{align}
& \nabla dr \ge \frac{\widetilde{\alpha}_1}{r} \,\widetilde{g} 
  \quad {\rm on}~E(r_0,\infty); \\
& -A_1 \le r \Big(\Delta_g r + \frac{\partial w}{\partial r} - c \Big) - b 
  \le B_1 \quad {\rm on}~E(r_0,\infty). 
\end{align}
Let $\lambda > 0$ and $\gamma >0 $ be constants, and assume that $u$ is a solution of 
\begin{align*}
  L u + \lambda u = 0 \quad {\rm on}~~E,
\end{align*}
satisfying 
\begin{align}
 \liminf_{t \to \infty} t^{\gamma} \int_{S(t)} 
 \bigg\{ \bigg( \frac{\partial u}{\partial r} \bigg)^2 + u^2 \bigg\} 
 e^{-cr+w} dA = 0.
\end{align}
Assume that 
\begin{align}
 2 \min \{ \widetilde{\alpha}_1, \gamma \} > A_1 + B_1~; \quad 
 \lambda > \frac{c^2 (A_1 + B_1)^2}{4(2\gamma - \varepsilon_0 - B_1)(\varepsilon_0 - A_1)},  
\end{align}
where $\varepsilon_0$ is the constant defined by $(*_4)$. 
Then, we have, for any $m>0$,
\begin{equation}
 \int_{E(r_0,\infty)} r^m \left\{ |\nabla u|^2 + |u|^2 \right\} e^{-cr+w} 
 \, dv_g < \infty.
\end{equation}
\end{thm}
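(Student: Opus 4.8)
The plan is to run a bootstrap argument in the exponent $m$, using the integral identity of Proposition~$2.3$ with a carefully chosen auxiliary function $\rho$. Concretely, I would take $\rho \equiv 0$ to start (so $v = u$, $q_1 = 0$) and apply Proposition~$2.3$ with $\gamma$ replaced by an arbitrary exponent $m \ge \gamma$ and with $\varepsilon$ chosen close to the critical value $\varepsilon_0$ from $(*_4)$. The role of hypothesis $(4)$ is to let us pass to the limit $t \to \infty$ along a suitable sequence $t_k \to \infty$: if we already know $\int_{E(r_0,\infty)} r^{m'}\{|\nabla u|^2 + u^2\} e^{-cr+w}\,dv_g < \infty$ for some $m' \ge \gamma$, then the boundary terms on $S(t_k)$ at exponent $m = m'$ (which are controlled by $t_k^{m'-\gamma} \cdot t_k^{\gamma}\int_{S(t_k)}\{\cdots\}$ together with the first-order term, handled via Cauchy--Schwarz and Lemma~$2.2$) tend to zero along $t_k$. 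The base case $m' = \gamma$ comes directly from $(4)$ via Lemma~$2.2$ and the equation; then each step improves $m'$ to $m' + \delta$ for a fixed $\delta > 0$, and iterating covers all $m > 0$.

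The heart of the matter is the coercivity of the volume integrals on the right-hand side of Proposition~$2.3$. Using $(1)$ we bound $r(\nabla dr)(\nabla v, \nabla v) \ge \widetilde{\alpha}_1 \widetilde{g}(\nabla v,\nabla v)$, so the first volume term dominates $(\widetilde{\alpha}_1 - \frac{1}{2}(\varepsilon + rq_\star - b))\widetilde{g}(\nabla v,\nabla v)$; using $(2)$, i.e. $-A_1 \le rq_\star - b \le B_1$, the coefficient is at least $\widetilde{\alpha}_1 - \frac{1}{2}(\varepsilon + B_1) > 0$ once $\varepsilon < 2\widetilde{\alpha}_1 - B_1$. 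Similarly the radial term has coefficient at least $m - \frac{1}{2}(\varepsilon + B_1) \ge \gamma - \frac{1}{2}(\varepsilon+B_1)$, positive when $\varepsilon < 2\gamma - B_1$; both are guaranteed by $\varepsilon \le \varepsilon_0$ together with $(*_5)$. With $\rho \equiv 0$ the cross term reduces to $\frac{c}{2} r q_\star \frac{\partial v}{\partial r} v$ plus a lower-order $\frac{(m-1)(m-\varepsilon+b)}{2r}\frac{\partial v}{\partial r}v$ piece, and the $v^2$-term reduces to $\frac{1}{2}\lambda(\varepsilon + rq_\star - b)$ plus lower order. Absorbing the cross term into the positive $(\partial v/\partial r)^2$ and $v^2$ terms by Young's inequality with weight, one finds the $v^2$-integrand is bounded below by $\frac{1}{2}\lambda\big(\varepsilon - A_1 - \frac{c^2(A_1+B_1)^2}{4(2\gamma - \varepsilon - B_1)}\big)r^{m-1}v^2$ up to controllable error; the second inequality in $(5)$, evaluated at $\varepsilon = \varepsilon_0$, makes this strictly positive. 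Thus the right side of Proposition~$2.3$ controls $\int_{E(r_0,t)} r^{m-1}\{\widetilde g(\nabla v,\nabla v) + (\partial v/\partial r)^2 + v^2\}e^{-cr+w}\,dv_g$ from below, while the left side is (after the limit) bounded by a finite quantity involving the lower exponent $m'$. Since $|\nabla v|^2 = (\partial v/\partial r)^2 + \widetilde g(\nabla v,\nabla v)$, this is exactly the improvement from $m' - 1$ to $m - 1$ in $(6)$; taking $m = m' + \delta$ with, say, $\delta = 1$ (or any fixed positive step) and iterating gives finiteness of $(6)$ for all $m > 0$.

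Two technical points need care. First, the boundary terms on $S(s)$ (with $s = r_0$ fixed) are harmless constants, but the first-order boundary integrand $\frac{m-\varepsilon+b}{2r}\frac{\partial v}{\partial r}v$ on $S(t_k)$ must be shown to vanish along $t_k$; this follows because $t_k^{\gamma}\int_{S(t_k)}\{(\partial u/\partial r)^2 + u^2\}e^{-cr+w}dA \to 0$ and $m - \gamma \ge 0$ only after we have verified, by an intermediate application of Lemma~$2.2$, that the quantity $\int_{S(t)} r^{m'}u^2 e^{-cr+w}dA$ is itself bounded (so no logarithmic loss accumulates). Second, one must check the implicit claim $A_1 < \varepsilon_0$: as the authors note right after $(*_4)$, this follows from $(*_5)$, since $2\widetilde\alpha_1 - B_1 > A_1 + B_1 - B_1 = A_1$ wait --- more simply $2\widetilde\alpha_1 - B_1 > A_1$ because $2\widetilde\alpha_1 > A_1 + B_1 \ge A_1 + B_1$ forces $2\widetilde\alpha_1 - B_1 > A_1$, and likewise $\frac{2\gamma + A_1 - B_1}{2} > A_1 \iff 2\gamma > A_1 + B_1$. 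So $\varepsilon_0 - A_1 > 0$ and $2\gamma - \varepsilon_0 - B_1 \ge 2\gamma - \frac{2\gamma + A_1 - B_1}{2} - B_1 = \frac{2\gamma - A_1 - B_1}{2} > 0$, which is what makes the denominator in $(5)$ positive and the whole scheme consistent.

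\textbf{Main obstacle.} The delicate part is the simultaneous bookkeeping: choosing one value of $\varepsilon$ (near $\varepsilon_0$) that keeps \emph{all three} coefficients --- for $\widetilde g(\nabla v,\nabla v)$, for $(\partial v/\partial r)^2$, and for $v^2$ --- strictly positive after the cross term $\frac{c}{2}rq_\star\,\frac{\partial v}{\partial r}v$ is absorbed by a weighted Young inequality, and verifying that the resulting positive lower bound survives the $\liminf$ passage in $(4)$ without losing the gained power of $r$. This is exactly where the precise shape of the threshold in $(5)$ (equivalently $(*_6)$) is forced, and it is the step I expect to require the most careful estimation.
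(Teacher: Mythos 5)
Your coercivity analysis of the volume terms in Proposition~$2.3$ (choice of $\varepsilon$ near $\varepsilon_0$, absorption of the cross term $\frac{c}{2}rq_\star\,\frac{\partial v}{\partial r}v$ by a weighted Young inequality, and the verification that $(*_5)$--$(*_6)$ make all three coefficients positive) matches the paper's discriminant argument and is sound. The gap is in the bootstrap itself. Proposition~$2.3$ applied with exponent $\mu$ produces \emph{surface} integrals at weight $r^{\mu}$ on the left and \emph{volume} integrals at weight $r^{\mu-1}$ on the right. To discard the outer boundary term at $S(t_k)$ you need $\liminf_{t}t^{\mu}\int_{S(t)}\{(\partial_r u)^2+u^2\}e^{-cr+w}dA=0$, and the only way you obtain that for $\mu>\gamma$ is from the coarea formula applied to a volume integral already known finite at weight $r^{\mu}$ (finiteness at weight $r^{\mu}$ over the end gives $\liminf t\cdot t^{\mu}\int_{S(t)}=0$, hence $\liminf$ control at weight $\mu+1$, but nothing at weight $\mu+1$ unless you already have volume weight $\mu$). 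So the application at exponent $m'+1$ requires as input exactly the finiteness at weight $r^{m'}$ that it outputs: the scheme never improves the exponent. Your appeal to Lemma~$2.2$ to bound $\int_{S(t)}r^{m'}u^2\,dA$ runs into the same circularity, since the right-hand side of Lemma~$2.2$ at $\beta=m'$ contains $2r^{m'}u\,\partial_r u$, which is only controlled by the weight-$r^{m'}$ volume integral. Even your base case $m=\gamma$ is not reached this way: the direct application of Proposition~$2.3$ at exponent $\gamma$ (the only exponent for which $(4)$ itself kills the outer boundary term) yields finiteness only at weight $r^{\gamma-1}$.

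The missing idea is the paper's iterated integration in the \emph{inner} radius. Keeping the exponent fixed at $\gamma$, one first derives the inequality
\begin{align*}
\int_{S(s)}r^{\gamma}\Big\{|\nabla u|^2-\lambda u^2+\tfrac{c}{2}q_\star u^2\Big\}e^{-cr+w}dA
\;\ge\; C_1\int_{E(s,\infty)}r^{\gamma-1}\{|\nabla u|^2+u^2\}e^{-cr+w}dv_g
\end{align*}
for all $s\ge r_3$ (the paper's $(17)$). Integrating this in $s$ over $[t,t_1]$, the left side becomes $\int_{E(t,t_1)}r^{\gamma}\{|\nabla u|^2-\lambda u^2+\tfrac{c}{2}q_\star u^2\}dv_g$, which --- and this is the crucial point --- is \emph{not} estimated directly but rewritten via Proposition~$2.1$ with $\psi=r^{\gamma}$, $\rho=0$ as boundary terms $\pm\int_{S}r^{\gamma}\partial_r u\,u$ plus $-\gamma\int r^{\gamma-1}\partial_r u\,u$, all of which are controlled by $(4)$ and by the weight-$(\gamma-1)$ finiteness already established. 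Fubini on the right side then yields $\int_{E(t,\infty)}(r-t)r^{\gamma-1}\{|\nabla u|^2+u^2\}e^{-cr+w}dv_g<\infty$, i.e.\ the case $m=\gamma$; integrating once more in $t$ gives $(r-t)^2r^{\gamma-1}$, i.e.\ $m=\gamma+1$, and so on. Each integration in the inner radius gains one power of $r$ without ever requiring better decay of the surface integrals at infinity, which is precisely what your exponent-raising scheme cannot supply. You should replace your induction on the exponent of Proposition~$2.3$ by this integrate-and-Fubini iteration.
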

\begin{proof}
Considering the first condition of $(9)$, we shall take a constant $\varepsilon$ so that
\begin{align}
 2 \min \{ \widetilde{\alpha}_1, \gamma \} - B_1 > \varepsilon > A_1.
\end{align}
We shall put $\rho(r)=0$ in Proposition $2.3$. 
Then, $v = u$ and $q_1=0$. 
Moreover, in view of $(2)$, the assumptions $(6)$ and $(7)$ implie that
\begin{align*}
  r (\nabla dr) (\nabla u, \nabla u) 
  - \frac{1}{2}(\varepsilon + rq_{\star} - b) 
  \, \widetilde{g}(\nabla u, \nabla u) 
\ge 
  \frac{1}{2} \Big\{ 2 \widetilde{\alpha}_1 - B_1 - \varepsilon \Big\} \, 
  \widetilde{g}(\nabla u, \nabla u) . 
\end{align*}
For simplicity, we shall set 
\begin{align*}
  c_{\max} := 
 \begin{cases}
\ c A_1   \qquad & {\rm if} \quad c > 0 , \\
\ \  0    \qquad & {\rm if} \quad c = 0 , \\
\ -c B_1  \qquad & {\rm if} \quad c < 0 .
 \end{cases}
\end{align*}
Then, we have $-c ( r q_{\star} - b) \le c_{\max}$, and hence, $c r q_{\star} + c_{\max} - cb \ge 0$.
Therefore, we obtain, for $r_0 \le s <t$, 
\begin{align}
& \int_{S(t)} r^{\gamma} \bigg\{ 2 \bigg( \frac{\partial u}{\partial r} \bigg)^2  + \lambda u^2 - |\nabla u|^2 
  + \frac{\gamma - \varepsilon + b}{r} \frac{\partial u}{\partial r} u \bigg\} 
  e^{-cr+w} \,dA \notag \\
& + \int_{S(s)} r^{\gamma} \bigg\{ |\nabla u|^2 - \lambda u^2 
  + \frac{c}{2} q_{\star} u^2 + \frac{c_{\max} - cb}{2r} u^2 
  - 2 \bigg( \frac{\partial u}{\partial r} \bigg)^2 
  - \frac{\gamma - \varepsilon + b}{r} \frac{\partial u}{\partial r} u \bigg\} 
  e^{-cr+w} \,dA \notag \\
\ge 
& \int_{S(t)} r^{\gamma} \bigg\{ 2 \bigg( \frac{\partial u}{\partial r} \bigg)^2  + \lambda u^2 - |\nabla u|^2 
  + \frac{\gamma - \varepsilon + b}{r} \frac{\partial u}{\partial r} u \bigg\} 
  e^{-cr+w} \,dA \notag \\
& + \int_{S(s)} r^{\gamma} \bigg\{ |\nabla u|^2 - \lambda u^2 
  - 2 \bigg( \frac{\partial u}{\partial r} \bigg)^2 
  - \frac{\gamma - \varepsilon + b}{r} \frac{\partial u}{\partial r} u \bigg\} 
  e^{-cr+w} \,dA \notag \\
\ge 
& \int_{E(s, t)} r^{\gamma-1} 
  \big\{ 2 \widetilde{\alpha}_1 - B_1 - \varepsilon \big\} \, 
  \widetilde{g}(\nabla u, \nabla u) e^{-cr+w} \,dv_g \notag \\
& + \int_{E(s, t)} r^{\gamma-1} 
  \big\{ 2 \gamma - \varepsilon - B_1 \big\} 
  \bigg( \frac{\partial u}{\partial r} \bigg)^2 e^{-cr+w} \,dv_g \notag \\
& + \int_{E(s, t)} r^{\gamma-1} 
  \big\{ c r q_{\star} + O(r^{-1}) \big\} 
  \frac{\partial u}{\partial r}u e^{-cr+w} \,dv_g \notag \\
& + \int_{E(s, t)} r^{\gamma-1} 
 \big\{ \lambda (\varepsilon - A_1) + O(r^{-1}) \big\} u^2 e^{-cr+w} \,dv_g. 
\end{align}
Let $<\alpha\ll 1$ be a small constant determined later. 
Substituting $\beta = \gamma -1$ into the equation in Lemma $2.2$, and multiplying it by the constant $\frac{c_{\max} - cb}{2} + \alpha$, we obtain 
\begin{align}
& \Big( \frac{c_{\max} - cb}{2} + \alpha \Big) \int_{S(t)} r^{\gamma-1} u^2 
  e^{-cr+w} \,dA  
  - \Big( \frac{c_{\max} - cb}{2} + \alpha \Big) \int_{S(s)} r^{\gamma-1} u^2 
  e^{-cr+w} \,dA \notag \\
= 
& \int_{E(s, t)} r^{\gamma-1} \Big\{ O(r^{-1}) u^2 
  + (c_{\max} - cb + 2\alpha) \frac{\partial u}{\partial r}u \Big\} 
  e^{-cr+w} \,dv_g.
\end{align}
Addition of $(13)$ to $(12)$ yields 
\begin{align}
& \int_{S(t)} r^{\gamma} \bigg\{ 
  2 \bigg( \frac{\partial u}{\partial r} \bigg)^2 
  + \big( \lambda + O(r^{-1}) \big) u^2 
  + \frac{\gamma - \varepsilon + b}{r} \frac{\partial u}{\partial r} u \bigg\} 
  e^{-cr+w} \,dA \notag \\
& + \int_{S(s)} r^{\gamma} \bigg\{ |\nabla u|^2 - \lambda u^2 
  + \frac{c}{2} q_{\star} u^2 - \frac{\alpha}{r} u^2 
  - 2 \bigg( \frac{\partial u}{\partial r} \bigg)^2 
  - \frac{\gamma - \varepsilon + b}{r} \frac{\partial u}{\partial r} u \bigg\} 
  e^{-cr+w} \,dA \notag \\
\ge 
& \int_{E(s, t)} r^{\gamma-1} \Big\{ 2 \widetilde{\alpha}_1 - \varepsilon 
  - B_1 \Big\} \, \widetilde{g}(\nabla u, \nabla u) e^{-cr+w} \,dA \notag \\
& + \int_{E(s, t)} r^{\gamma-1} 
  \big\{ 2 \gamma - \varepsilon - B_1 \big\} 
  \bigg( \frac{\partial u}{\partial r} \bigg)^2 e^{-cr+w} \,dv_g \notag \\
& - \int_{E(s, t)} r^{\gamma-1} 
  \big\{ |c| ( A_1 + B_1 ) + 2 \alpha + O(r^{-1}) \big\} 
  \bigg| \frac{\partial u}{\partial r} u \bigg| e^{-cr+w} \,dv_g \notag \\
& + \int_{E(s, t)} r^{\gamma-1} 
 \big\{ \lambda (\varepsilon - A_1) + O(r^{-1}) \big\} u^2 e^{-cr+w} \,dv_g,
\end{align}
where we have used the fact, $|c (r q_{\star} - b) + c_{\max} | \le |c|(A_1+B_1)$. 

The discriminant of a quadratic equation $(2\gamma - \varepsilon - B_1)x^2 - |c|(A_1+B_1)x + \lambda (\varepsilon -A_1)=0$ is equal to $c^2(A_1+B_1)^2 - 4(2\gamma -\varepsilon -B_1)\lambda (\varepsilon - A_1)$; we shall consider the function 
$$
  h(t) := \frac{1}{4(2\gamma - t - B_1)(t - A_1)} 
  \quad {\rm for}~t \in \big\{ t \mid A_1 < t < 2 \gamma - B_1,~2 \widetilde{\alpha}_1 - B_1 \ge t \big\} ;
$$
then, $h(t)$ takes the minimum value at $\varepsilon_0$. 
Hence, in view of the second condition of $(9)$, by taking $\varepsilon < \varepsilon_0$ sufficiently close to $\varepsilon_0$ and taking $\alpha > 0$ sufficiently small in $(14)$, we see that, there exists constants $r_1 = r_1(\lambda, \gamma, \widetilde{\alpha}_1, A_1, B_1, \alpha ) \ge r_0$ and $C_1 = C_1(\lambda, \gamma, \widetilde{\alpha}_1, A_1, B_1, \alpha ) > 0$ such that the right hand side of $(14)$ is bounded from below by   
\begin{align}
  \frac{C_1}{2} \int_{E(s, t)} r^{\gamma-1} \{ |\nabla u|^2 + u^2 \} e^{-cr+w} 
  \,dv_g \qquad {\rm for~any}~~t > s \ge r_1 . 
\end{align}
On the other hand, there exists a constant $r_2 = r_2(\alpha, \varepsilon, b, \gamma)$ such that 
\begin{align}
  - \frac{\alpha}{r} u^2 
  - 2 \bigg( \frac{\partial u}{\partial r} \bigg)^2 
  - \frac{\gamma - \varepsilon + b}{r} \frac{\partial u}{\partial r} u 
 \le 0 \quad {\rm for}~~r\ge r_2 .
\end{align}
Furthermore, the assumption $(8)$ implies that, there exits a divergent sequence $\{t_i\}$ of real numbers such that the first term with $t = t_i$ on the left hand side of $(14)$ converges to zero as $i\to \infty$. 
Hence, taking $(15)$ and $(16)$ into account, putting $t = t_i$ in $(14)$, and letting $i\to \infty$, we obtain, for $t > s \ge r_3 :=\max \{ r_1 , r_2 \}$, 
\begin{align}
& \int_{S(s)}r^{\gamma} \left\{ |\nabla u|^2 - \lambda u^2 
  + \frac{c}{2} q_{\star} u^2 \right\} e^{-cr+w} \,dA \notag \\
\ge
& C_1 \int_{E(s,\infty)} r^{\gamma-1} \left\{ |\nabla u|^2 + u^2 \right\} 
  e^{-cr+w} \,dv_g .
\end{align}
Thus, integrating the both sides of $(17)$ with respect to $s$ over $[t,t_1]$, we have, for $r_3 \le t < t_1$, 
\begin{align*}
& C_1 \int^{t_1}_t \, ds \int_{E(s,\infty)} r^{\gamma-1} 
  \left\{ |\nabla u|^2 + |u|^2 \right\} e^{-cr+w} \,dv_g \\
\le 
& \int_{E(t,t_1)} r^{\gamma}
  \left\{ |\nabla u|^2 - \lambda u^2 + \frac{c}{2} q_{\star} u^2 \right\} 
  e^{-cr+w} \, dv_g \\
= 
& \left( \int_{S(t_1)} - \int_{S(t)} \right) r^{\gamma} 
  \frac{\partial u}{\partial r}u e^{-cr+w} \,dA 
  - \gamma \int_{E(t,t_1)} r^{\gamma-1} \frac{\partial u}{\partial r}u e^{-cr+w}  \,dv_g .
\end{align*}
Here, in the last line, we have used the equation in Proposition $2.1$ with $\rho (r) = 0$ and $\psi = r^{\gamma}$. 
Since $(8)$ implies 
\begin{align*}
  \liminf_{t_1\to \infty} \int_{S(t_1)} r^{\gamma}
  \frac{\partial u}{\partial r}u e^{-cr+w} \,dA = 0,
\end{align*}
letting appropriately $t_1\to \infty$ and using Fubini's theorem, we obtain, from the inequality above, 
\begin{align}
& C_1 \int^{\infty}_t \,ds \int_{E(s,\infty)} r^{\gamma-1} 
  \big\{ |\nabla u|^2 + u^2 \big\} e^{-cr+w} \,dv_g \notag \\
= 
& C_1 \int_{E(t,\infty)} (r - t) r^{\gamma-1}
  \big\{ |\nabla u|^2 + u^2 \big\} e^{-cr+w} \,dv_g \notag \\
\le 
& \int_{S(t)} r^{\gamma}
  \bigg\{ \left( \frac{\partial u}{\partial r} \right)^2 + u^2 \bigg\} 
  e^{-cr+w} \,dA
  + \gamma \int_{E(t,\infty)} r^{\gamma-1} 
  \bigg\{ \left( \frac{\partial u}{\partial r} \right)^2 + u^2 \bigg\}
  e^{-cr+w} \,dv_g \notag \\ 
<
& \infty . 
\end{align}
Here note that the right hand side of $(18)$ is finite by $(17)$. 
Thus, we see that the desired assertion $(10)$ holds for $m = \gamma$. 

Integrating $(18)$ with respect to $t$ over $[t_1,\infty)$, and using Fubini's theorem, we obtain, for $t_1 \ge r_1$, 
\begin{align*}
& C_1 \int_{E(t_1, \infty)} (r - t)^2 r^{\gamma - 1} 
  \big\{ |\nabla u|^2 + u^ 2 \big\} e^{-cr+w} \,dv_g \\
\le 
& \int_{E(t_1, \infty)} r^{\gamma} 
  \bigg\{ \left( \frac{\partial u}{\partial r} \right)^2 + u^2 \bigg\} 
  e^{-cr+w} \,dv_g \\ 
& + \gamma \int_{E(t_1, \infty)} (r - t) r^{\gamma - 1}
  \bigg\{ \left( \frac{\partial u}{\partial r} \right) ^2 + u^2 \bigg\} 
  e^{-cr+w} \,dv_g \\ 
< & \infty,
\end{align*}
where, note that the right hand side of this inequality is finite by $(18)$. 
Thus, we see that the desired assertion $(10)$ holds for $m = \gamma + 1$. 
Repeating the integration with respect to $t$ shows that the assertion $(10)$ is valid for $m = \gamma + 2, \gamma + 3, \cdots $, therefore, for any $m > 0$. 
\end{proof}
\section{Faster than exponential decay}
We shall first prove Lemma $4.1$ and Lemma $4.2$, which will be used in the proof of Theorem $4.1$: 
\begin{lem}
Assume that the conditions in Theorem $3.1$ holds. 
Assume that, there exists a constant $k_0 \ge 0$ such that $(22)$ below holds, and set $\rho (r) := k_0 + m \log r$. 
Then, for $x \ge r_0$, $v= r^m e^{k_0r} u$ satisfies 
\begin{align*}
& \int_{E(x, \infty)} r^{1 - 2m} \Big\{ |\nabla v|^2 
  - \Big( \lambda + q_1 - \big( \rho '(r) + \frac{c}{2} \big)q_{\star} \Big) v^2 \Big\} 
  e^{-cr+w} \, dv_g \\ 
=
& - \frac{1}{2} \frac{d}{dx}\bigg( x^{1-2m} \int_{S(x)} v^2 e^{-cr+w} 
  \, dA \bigg) 
  - \frac{1}{2} \int_{S(x)} r^{-2m} \big\{ 2m - 1 - rq_{\star} \big\} v^2 
  e^{-cr+w} \, dA \\
& - \int_{E(x, \infty)} r^{-2m} \frac{\partial v}{\partial r} v 
  e^{-cr+w} \, dv_g. 
\end{align*}
\end{lem}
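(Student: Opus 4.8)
The plan is to derive the identity directly from Proposition~2.1 by making the explicit choices $\rho(r)=k_0+m\log r$ and $\psi=r^{1-2m}$, and then to recognize the boundary term on $S(x)$ as $(x,t)$-limits of an exact derivative. First I would record that with $\rho(r)=k_0+m\log r$ one has $\rho'(r)=m/r$, so $v=e^{\rho(r)}u=r^{m}e^{k_0r}u$ as claimed, and $\nabla\psi+2\psi\rho'(r)\nabla r = (1-2m)r^{-2m}\nabla r + 2m\,r^{-2m}\nabla r = r^{-2m}\nabla r$; this is the key cancellation that makes the last term of Proposition~2.1 collapse to exactly $-\int r^{-2m}\langle\nabla r,\nabla v\rangle v\,e^{-cr+w} = -\int r^{-2m}\frac{\partial v}{\partial r}v\,e^{-cr+w}$. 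Substituting $\psi=r^{1-2m}$ into the left-hand side of Proposition~2.1 gives precisely the integrand $r^{1-2m}\{|\nabla v|^2-(\lambda+q_1-(\rho'+\frac{c}{2})q_\star)v^2\}$ appearing in the statement.

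Second, I would handle the surface terms $\int_{S(t)}\psi\frac{\partial v}{\partial r}v\,e^{-cr+w}\,dA - \int_{S(s)}(\cdots)$ from Proposition~2.1, taking $s=x$ and letting $t\to\infty$. The point here is that $\psi\frac{\partial v}{\partial r}v = r^{1-2m}\frac{\partial v}{\partial r}v$, and $\frac{\partial v}{\partial r}v = \frac12\frac{\partial}{\partial r}(v^2)$, so on a level set $S(t)$ one can relate $\int_{S(t)}r^{1-2m}\frac{\partial v}{\partial r}v\,e^{-cr+w}\,dA$ to $\frac{d}{dt}\big(t^{1-2m}\int_{S(t)}v^2e^{-cr+w}\,dA\big)$ via the coarea/first-variation formula: differentiating $t^{1-2m}\int_{S(t)}v^2 e^{-cr+w}dA$ produces the derivative of the weight, $(1-2m)t^{-2m}\int_{S(t)}v^2e^{-cr+w}dA$, plus $t^{1-2m}$ times the derivative of $\int_{S(t)}v^2 e^{-cr+w}dA$, and the latter expands (using ${\rm div}(v^2e^{-cr+w}\nabla r)$ along the flow, i.e. the infinitesimal form of Lemma~2.2) as $\int_{S(t)}\{(q_\star)v^2 + 2v\frac{\partial v}{\partial r}\}e^{-cr+w}dA$. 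Collecting the $v^2$ coefficients gives $-\frac12 r^{-2m}\{2m-1-rq_\star\}v^2$ and the $\frac{\partial v}{\partial r}v$ piece is absorbed into the $-\frac12\frac{d}{dx}(\cdots)$ term; this reproduces exactly the two $S(x)$ terms in the statement.

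Third, I would justify that the $t\to\infty$ boundary contribution vanishes. This is where the hypotheses of Theorem~3.1 — in particular the faster-than-polynomial decay conclusion~$(10)$ established there, valid for every $m>0$ — together with the growth/decay hypothesis~$(22)$ referred to in the lemma's statement, must be invoked: these guarantee that $v=r^m e^{k_0 r}u$ and $\nabla v$ still lie in the relevant weighted $L^2$ spaces on $E(x,\infty)$, so that all integrals over $E(x,\infty)$ converge, and that along a suitable divergent sequence $t_i\to\infty$ the surface integral $t_i^{1-2m}\int_{S(t_i)}v^2 e^{-cr+w}\,dA\to 0$ and likewise $\int_{S(t_i)}r^{1-2m}\frac{\partial v}{\partial r}v\,e^{-cr+w}dA\to 0$. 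Taking $t=t_i$ in the Proposition~2.1 identity and passing to the limit then yields the asserted formula with only the $S(x)$ boundary terms surviving.

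\textbf{Main obstacle.} The routine part is the algebra of $\rho'$, $\psi$, and the cancellation $\nabla\psi+2\psi\rho'\nabla r = r^{-2m}\nabla r$; the genuinely delicate step is the last one — controlling the surface term at infinity and verifying finiteness of all the $E(x,\infty)$-integrals. This requires that condition~$(22)$ (the assumed bound ensuring $\rho'(r)+\tfrac c2\ge$ something, or the exponential-weight admissibility for $k_0$) be exactly strong enough to keep $v$ in the weighted energy space, so that the decay $(10)$ can be upgraded to include the factor $e^{2k_0 r}$; I would make sure the choice of the divergent sequence $\{t_i\}$ is compatible with both the $v^2$ surface integral and the $\frac{\partial v}{\partial r}v$ surface integral tending to zero, which typically follows because their radial integrals over $E(x,\infty)$ are finite.
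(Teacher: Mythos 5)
Your proposal is correct and follows essentially the same route as the paper: substitute $\psi=r^{1-2m}$ into Proposition~2.1 (with the cancellation $\nabla\psi+2\psi\rho'\nabla r=r^{-2m}\nabla r$), kill the $S(t)$ boundary term along a divergent sequence using $(22)$, and convert the remaining $S(x)$ term $\int_{S(x)}r^{1-2m}\frac{\partial v}{\partial r}v\,e^{-cr+w}dA$ into $\frac12\frac{d}{dx}\bigl(x^{1-2m}\int_{S(x)}v^2e^{-cr+w}dA\bigr)$ minus the $\{2m-1-rq_\star\}$ correction, which is exactly the paper's identity $(19)$ (derived there via $\partial_r\sqrt{G}=(\Delta_g r)\sqrt{G}$, i.e.\ the same first-variation computation you invoke). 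The only blemish, inherited from the paper's own statement, is the identification $e^{k_0+m\log r}u=r^me^{k_0r}u$, which is consistent with your (and the paper's) use of $\rho'(r)=m/r$ only up to the harmless constant factor versus the intended $\rho(r)=k_0r+m\log r$.
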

\begin{proof}
Let $A_{\partial E}$ denote the induced measure on $\partial E$, and write $A= \sqrt{G} \, A_{\partial E}$ on $E$. 
Then, a direct computation shows that 
\begin{align}
& \frac{d}{dx}\bigg( x^{1-2m} \int_{S(x)} v^2 e^{-cr+w} \, dA \bigg) 
  \notag \\
= 
& \int_{S(x)} r^{-2m} \big\{ 1 - 2m + rq_{\star} \big\} v^2 e^{-cr+w} \, dA 
  + 2 \int_{S(x)} r^{1-2m} \frac{\partial v}{\partial r} v e^{-cr+w} \, dA, 
\end{align}
where we have used the definition $(2)$ of $q_{\star}$ and the equation $\frac{\partial \sqrt{G}}{\partial r} = (\Delta_g r) \sqrt{G}$. 

On the other hand, we shall substitute $\psi = r^{1-2m}$ into the equation in Proposition $2.1$. 
Then, we have, for $r_0 \le x < t$, 
\begin{align*}
& \int_{E(x, t)} r^{1 - 2m} \Big\{ |\nabla v|^2 
  - \Big( \lambda + q_1 - \big( \rho '(r) + \frac{c}{2} \big)q_{\star} \Big) v^2  \Big\} e^{-cr+w} \, dv_g \\ 
= 
& \int_{S(t)} r^{1 - 2m} \frac{\partial v}{\partial r} v e^{-cr+w} \, dA 
  - \int_{S(x)} r^{1 - 2m} \frac{\partial v}{\partial r} v e^{-cr+w} \, dA \\ 
& - \int_{E(x, t)} r^{- 2m} \frac{\partial v}{\partial r} v e^{-cr+w} \, dv_g. 
\end{align*}
The assumption $(22)$ implies that $\displaystyle \liminf_{t\to \infty} \int_{S(t)} r^{1 - 2m} \frac{\partial v}{\partial r} v e^{-cr+w} \, dA =0$, and hence, by substituting an appropriate divergence sequence $\{ t_j \}$ into the equation above, we have
\begin{align}
& \int_{E(x, \infty)} r^{1 - 2m} \Big\{ |\nabla v|^2 
  - \Big( \lambda + q_1 - \big( \rho '(r) + \frac{c}{2} \big)q_{\star} \Big) v^2  \Big\} e^{-cr+w} \, dv_g \notag \\ 
= 
& - \int_{S(x)} r^{1 - 2m} \frac{\partial v}{\partial r} v e^{-cr+w} \, dA  - \int_{E(x, \infty)} r^{- 2m} \frac{\partial v}{\partial r} v e^{-cr+w} \, dv_g. 
\end{align}
Lemma $4.1$ immediately follows from $(19)$ and $(20)$. 
\end{proof}
\begin{lem}
For any $k \in \mathbb{R}$ and $r_0 \le s < t$, we have 
\begin{align*}
& \int_{S(t)} e^{kr} u^2 e^{-cr+w} \,dA 
  - \int_{S(s)} e^{kr} u^2 e^{-cr+w} \,dA \\
=
& \int_{E(s, t)} e^{kr} \big\{ k + q_{\star} \big\} u^2 e^{-cr+w} \,dv_g 
  + 2\int_{E(s, t)} e^{kr} \frac{\partial u}{\partial r} u e^{-cr+w} \,dv_g. 
\end{align*}
\end{lem}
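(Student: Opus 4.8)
The plan is to imitate the proof of Lemma 2.2, replacing the weight $r^{\beta}$ by $e^{kr}$. The key computation is to evaluate the divergence of the vector field $X := e^{kr} u^2 e^{-cr+w}\,\nabla r$ and then apply the divergence theorem on $E(s,t)$ with respect to $v_g$.

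First I would record that, since $|\nabla r|=1$, the outward unit normal to $S(t)$ and $S(s)$ is $\nabla r$, so $\langle X,\nabla r\rangle = e^{kr}u^2 e^{-cr+w}$ on each level set, and the divergence theorem yields
\begin{align*}
\int_{E(s,t)}{\rm div}\,X\,dv_g = \int_{S(t)} e^{kr}u^2 e^{-cr+w}\,dA - \int_{S(s)} e^{kr}u^2 e^{-cr+w}\,dA.
\end{align*}
Next I would compute ${\rm div}\,X$. Writing $X = \phi\,\nabla r$ with $\phi := e^{(k-c)r}u^2 e^{w}$, the identity ${\rm div}(\phi\,\nabla r) = \tfrac{\partial\phi}{\partial r} + \phi\,\Delta_g r$ (using ${\rm div}(\nabla r)=\Delta_g r$) gives, after differentiating $\phi$ in $r$,
\begin{align*}
{\rm div}\,X = e^{kr}e^{-cr+w}\Big\{ \Big(k - c + \tfrac{\partial w}{\partial r} + \Delta_g r\Big)u^2 + 2u\tfrac{\partial u}{\partial r} \Big\}.
\end{align*}
Then I would invoke the definition $(2)$, $q_{\star} = \Delta_g r + \tfrac{\partial w}{\partial r} - c$, to rewrite the bracket as $(k+q_{\star})u^2 + 2u\tfrac{\partial u}{\partial r}$, so that ${\rm div}\,X = e^{kr}\{k+q_{\star}\}u^2 e^{-cr+w} + 2e^{kr}\tfrac{\partial u}{\partial r} u\, e^{-cr+w}$.

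Finally, substituting this expression for ${\rm div}\,X$ into the divergence-theorem identity above produces exactly the claimed equality. There is no real obstacle here: every step is a direct calculation, and the only point requiring a bit of care is the bookkeeping of the three contributions to $\tfrac{\partial\phi}{\partial r}$ (from $e^{(k-c)r}$, from $u^2$, and from $e^{w}$) and recognizing that two of them combine with $\phi\,\Delta_g r$ into the factor $k+q_{\star}$.
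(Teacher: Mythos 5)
Your proposal is correct and follows exactly the paper's own argument: the paper likewise computes ${\rm div}(e^{kr}u^2e^{-cr+w}\nabla r)=e^{kr}\{(k+\Delta_g r+\tfrac{\partial w}{\partial r}-c)u^2+2\tfrac{\partial u}{\partial r}u\}e^{-cr+w}$, identifies the bracket with $k+q_{\star}$ via $(2)$, and integrates over $E(s,t)$ by the divergence theorem. No issues.
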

\begin{proof}
A direct computation shows that
\begin{align*}
  {\rm div}(e^{kr} u^2 e^{-cr+w} \nabla r ) 
= e^{kr} \bigg\{ \Big( k+ \Delta_g r + \frac{\partial w}{\partial r} - c \Big) 
  u^2 + 2 \frac{\partial u}{\partial r} u \bigg\} e^{-cr+w} . 
\end{align*}
In view of $(2)$, integration of this equation over $E(s, t)$ with respect to $v_g$ yields Lemma $4.2$. 
\end{proof}
\begin{thm}
Under the assumptions of Theorem $3.1$, we have, for any $ k > 0 $, 
\begin{align}
  \int_{E(r_0, \infty)} e^{kr} \left\{ u^2 + |\nabla u|^2 \right\} 
  e^{-cr+w} \,dv_g < \infty.
\end{align}
\end{thm}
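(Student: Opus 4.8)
The idea is to bootstrap from the polynomial decay of Theorem $3.1$ to exponential decay by iterating an integral inequality, exactly as polynomial decay was obtained by iterated integration in the proof of Theorem $3.1$. Fix $k>0$ and, for $k_0 \in [0,k]$, consider $v := r^m e^{k_0 r} u$ with $\rho(r) = k_0 + m\log r$, where $m>0$ will be chosen large relative to $k$ and the structural constants. The weighted integrals appearing in Lemma $4.1$ are finite at $k_0 = 0$ for every $m$ by $(10)$ of Theorem $3.1$; the strategy is to show by a continuity/open-closed argument in $k_0$ that the integral $\int_{E(r_0,\infty)} e^{2k_0 r}\{u^2 + |\nabla u|^2\} e^{-cr+w}\,dv_g$ remains finite for all $k_0 \le k$, i.e. that the set of admissible $k_0$ is both open and closed in $[0,k]$ and contains $0$.

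First I would extract from Lemma $4.1$, combined with Proposition $2.3$ applied with $\rho(r)=k_0+m\log r$, $\gamma := 1-2m$, and a suitable choice of $\varepsilon$, a coercivity estimate of the same type as $(14)$–$(17)$ in the proof of Theorem $3.1$: the geometric expansion hypothesis $(6)$ makes the $\widetilde g(\nabla v,\nabla v)$-term positive once $2\widetilde\alpha_1 - \varepsilon - B_1 > 0$, the choice of large $m$ makes the $(\partial v/\partial r)^2$-coefficient $\gamma - \tfrac12(\varepsilon + rq_\star - b) + 2\rho'(r)r = 1 - 2m - \tfrac12(\varepsilon + rq_\star - b) + 2(k_0 r + m)$ have the right sign for $r$ large (here the $2k_0 r$ from $\rho'(r)r = k_0 r + m$ is the crucial gain), and the cross term $\partial v/\partial r \cdot v$ is absorbed via Lemma $2.2$ / Lemma $4.2$ together with Cauchy–Schwarz, the quadratic-form positivity being guaranteed by the spectral hypothesis $\lambda > c^2(A_1+B_1)^2 / \{4(2\gamma-\varepsilon_0-B_1)(\varepsilon_0-A_1)\}$ in $(9)$, which controls the $c q_\star$ coupling exactly as in $(14)$. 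This yields, for $x$ beyond some $r_4 \ge r_0$ and for every $k_0$ in the admissible set,
\begin{align*}
 \int_{E(x,\infty)} r^{1-2m} e^{2k_0 r}\{|\nabla u|^2 + u^2\} e^{-cr+w}\,dv_g
 \le C \int_{S(x)} r^{1-2m} e^{2k_0 r}\{(\partial u/\partial r)^2 + u^2\} e^{-cr+w}\,dA + (\text{lower order}),
\end{align*}
with $C$ independent of $k_0 \in [0,k]$.

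Next I would integrate this inequality in $x$ and run the Fubini/monotone-convergence argument from $(18)$–$(21)$ to convert the boundary term on $S(x)$ into a bulk term with one fewer power of $r$, and simultaneously to upgrade $k_0$ slightly: the point is that if $\int e^{2k_0 r}\{u^2+|\nabla u|^2\}e^{-cr+w} < \infty$ and $m$ is large enough that $r^{1-2m}e^{2\delta r} \le e^{2k_0 r}$ eventually for small $\delta>0$, one gets $\int e^{2(k_0+\delta)r}\{u^2+|\nabla u|^2\}e^{-cr+w} < \infty$; hence the admissible set is open, and it is trivially closed (monotone limits of finite integrals, using Fatou), so it equals $[0,k]$. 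Taking $k_0 = k/2$ and dropping the harmless polynomial factor gives $(21)$. The main obstacle, as in Section $3$, is the careful bookkeeping needed to keep the coefficient of $(\partial v/\partial r)^2$ and of $\widetilde g(\nabla v,\nabla v)$ simultaneously nonnegative while the $c q_\star v \,\partial v/\partial r$ cross term is absorbed — here one must verify that enlarging $m$ does not destroy the quadratic-form positivity that the spectral bound in $(9)$ buys (it does not, since $m$ enters the $(\partial v/\partial r)^2$ coefficient with a favorable sign and the $v^2$ coefficient only through lower-order $O(r^{-1})$ terms once $q_1 = m(m+1)/r^2 - \text{const}$ is tracked), together with the uniformity of all constants in $k_0 \in [0,k]$, which is what makes the open-closed induction legitimate.
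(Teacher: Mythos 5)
There is a genuine gap at the heart of your argument: the mechanism that converts ``$r^m$-weighted finiteness for all $m$'' into ``$e^{\delta r}$-weighted finiteness for some $\delta>0$'' is missing, and the open--closed induction you propose in its place does not work. For openness you invoke the inequality $r^{1-2m}e^{2\delta r}\le e^{2k_0 r}$ for large $r$; this is false whenever $\delta>k_0$ (and in particular at the starting point $k_0=0$ for every $\delta>0$), since $2(\delta-k_0)r$ eventually dominates $(2m-1)\log r$ for any fixed $m$. For closedness, Fatou/monotone convergence only tells you that $\int e^{2k_0^* r}(\cdots)=\lim_j\int e^{2k_0^{(j)}r}(\cdots)$ as $k_0^{(j)}\uparrow k_0^*$, and an increasing sequence of finite integrals can perfectly well tend to $+\infty$ (consider $u\sim e^{-k_0^* r}$: the admissible set would be $[0,k_0^*)$, open but not closed). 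So neither half of the connectedness argument is established, and indeed no soft argument of this kind can work: no fixed polynomial weight implies any exponential weight.

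The paper's proof supplies exactly the quantitative ingredient you are missing. Setting $\rho(r)=k_0r+m\log r$ and combining Proposition $2.3$ with Lemma $4.1$, it arrives (after the absorption steps you correctly anticipate) at
\begin{align*}
 -\tfrac12 F'(x)-(1-\theta)\tfrac{m}{x}F(x)
 \ \ge\ \Big\{\overline{c}_1-\tfrac{m}{x}\,2k_0\overline{c}_2-\big(\tfrac{m}{x}\big)^2\overline{c}_3\Big\}\cdot(\text{nonnegative}),
 \qquad F(x)=x\int_{S(x)}e^{2k_0r}u^2e^{-cr+w}\,dA ,
\end{align*}
and the decisive observation is that $F$ does not depend on $m$, so one may choose $m$ \emph{proportional to} $x$, namely $m/x=\overline{c}_7(k_0)$ (the positive root of the bracket), for each $x$. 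This yields the genuine differential inequality $F'(x)+2(1-\theta)\overline{c}_7F(x)\le 0$, hence $F(x)\le Ce^{-2(1-\theta)\overline{c}_7x}$ --- an honest exponential gain of definite size $\overline{c}_7(k_0)=\sqrt{\{(\overline{c}_4)^2+\overline{c}_5\}k_0^2+\overline{c}_6}-\overline{c}_4k_0>0$. The induction is then run not by connectedness but by the explicit recursion $a_{n+1}=a_n+\sqrt{\{(\overline{c}_4)^2+\overline{c}_5\}a_n^2+\overline{c}_6}-\overline{c}_4a_n$, which is shown to diverge; a separate integration-by-parts step at the end recovers the gradient term from the $u^2$ bound. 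Your outline correctly identifies the auxiliary lemmas and the favorable sign of the $2k_0r$ term in the $(\partial v/\partial r)^2$ coefficient, but without the ``$m\to\infty$ proportionally to $x$'' optimization (or an equivalent Agmon-type device) the passage from polynomial to exponential decay is not achieved.
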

\begin{proof}
Let $k_0$ be a ``nonnegative'' constant. 
In order to prove Theorem $4.1$, we shall assume that 
\begin{equation}
  \int_{E(r_0, \infty)} r^{m} e^{2 k_0 r} \big\{ u^2 + |\nabla u|^2 \big\} 
  e^{-cr+w} \,dv_g < \infty  \qquad {\rm for~all}~~m \ge 1
\end{equation}
and show that, there exist positive constants $\overline{c}_4 = \overline{c}_4(A_1, \varepsilon)$, $\overline{c}_5 = \overline{c}_5(A_1, \varepsilon)$, and $\overline{c}_6 = \overline{c}_6(A_1, \varepsilon)$, independent of $k_0 \ge 0$, such that 
\begin{align}
& \int_{E(r_0, \infty)} e^{ 2(k_0 + k) r} u^2 e^{-cr+w} \, dv_g 
  < \infty \notag \\ 
& \hspace{30mm} {\rm for~any}~~0< k < \sqrt{\big\{ (\overline{c}_4)^2 + \overline{c}_5 \big\}(k_0)^2 + \overline{c}_6} - \overline{c}_4 k_0 , 
\end{align}
where $\varepsilon$ is a fixed constant satisfying 
\begin{equation}
 A_1 < \varepsilon < 2 \widetilde{\alpha}_1 - B_1 . 
\end{equation}
For that purpose, we shall set 
\begin{align}
  \rho(r) = k_0 r + m \log r \quad {\rm and} \quad \gamma = \varepsilon - b
\end{align} 
in Proposition $2.3$. 
Then, we have
\begin{align}
& v = r^m e^{k_0 r} u~; 
  \quad q_1 = - \rho''(r) +(\rho'(r))^2 
  = (k_0)^2 + \frac{m^2 + m}{r^2} + 2 k_0 \frac{m}{r} ~; \\
& r \frac{\partial q_1}{\partial r} = -2 \frac{m^2 + m}{r^2} 
  - 2 k_0 \frac{m}{r} \notag . 
\end{align}
For convenience, we shall set 
$$
  b_{\max} 
  := \max \big\{ |b-A_1|, |b+B_1| \big\}~;~{\rm then},~|r q_{\star}| \le b_{\max} .
$$
Hence, we have, for $r_0 \le s < t$, 
\begin{align}
& \int_{S(t)} r^{\varepsilon-b} \bigg\{ 
  \bigg( \frac{\partial v}{\partial r} \bigg)^2
  + \frac{1}{2} (\lambda + q_1) v^2 - \frac{1}{2} |\nabla v|^2 \bigg\} 
  e^{-cr+w} \, dA \notag \\
& + \frac{1}{2} \int_{S(s)} r^{\varepsilon-b} 
  \bigg\{ |\nabla v|^2 - (\lambda + q_1) v^2 
  + \Big( \rho'(r) + \frac{c}{2} \Big)q_{\star} v^2 \notag \\
& \hspace{40mm} + b_{\max} \Big( \frac{m}{r^2} 
  + \frac{|c + 2k_0|}{2r} \Big) v^2 
  - 2 \bigg( \frac{\partial v}{\partial r} \bigg)^2 \bigg\} 
  e^{-cr+w} \, dA \notag \\
\ge 
& \int_{S(t)} r^{\varepsilon-b} \bigg\{ 
  \bigg( \frac{\partial v}{\partial r} \bigg)^2
  + \frac{1}{2} (\lambda + q_1) v^2 - \frac{1}{2} |\nabla v|^2 \bigg\} 
  e^{-cr+w} \, dA \notag \\
& + \frac{1}{2} \int_{S(s)} r^{\varepsilon-b} 
  \bigg\{ |\nabla v|^2 - (\lambda + q_1) v^2 
  + \Big( \rho'(r) + \frac{c}{2} \Big)q_{\star} v^2 \notag \\
& \hspace{50mm} - \Big( \rho'(r) + \frac{c}{2} \Big)q_{\star} v^2 
  - 2 \bigg( \frac{\partial v}{\partial r} \bigg)^2 \bigg\} 
  e^{-cr+w} \, dA \notag \\
\ge 
& \int_{E(s, t)} r^{\varepsilon-b-1} 
  \bigg\{ 2 k_0 r + 2m + \frac{\varepsilon-B_1-2b}{2} \bigg\}
  \bigg( \frac{\partial v}{\partial r} \bigg)^2 e^{-cr+w} \, dv_g \notag \\
& + \int_{E(s, t)} r^{\varepsilon-b-1} 
  \bigg\{ k_0 + \frac{m}{r} + \frac{c}{2} \bigg\} rq_{\star} 
  \frac{\partial v}{\partial r} v e^{-cr+w} \, dv_g \notag \\
& + \frac{1}{2} \int_{E(s, t)} r^{\varepsilon-b-1} 
  \bigg\{ (\lambda + k_0^2)(\varepsilon - A_1) 
  - \frac{m^2+m}{r^2}(2 + A_1 - \varepsilon) \notag \\ 
& \hspace{50mm}  - 2k_0 \frac{m}{r}(1 + A_1 - \varepsilon) \bigg\} v^2 
  e^{-cr+w} \, dv_g , 
\end{align}
where we have used the facts, 
\begin{align*}
  -A_1 \le rq_{\star} - b \le B_1 ~~; \quad 
  r(\nabla dr)(\nabla v, \nabla v) 
  - \frac{1}{2} (\varepsilon + B_1) \, \widetilde{g} (\nabla v, \nabla v) 
  \ge 0 . 
\end{align*}
Now, substituting $\beta = \varepsilon-b-2$ and $\beta = \varepsilon-b-1$ into the equation in Lemma $2.2$ and multiplying them by $\frac{mb_{\max}}{2}$ and $\frac{|c+2k_0|b_{\max}}{4}$ respectively, we have
\begin{align}
& \frac{mb_{\max}}{2} \int_{S(t)} r^{\varepsilon-b-2} v^2 e^{-cr+w} \, dA 
  - \frac{mb_{\max}}{2} \int_{S(s)} r^{\varepsilon-b-2} v^2 e^{-cr+w} 
  \, dA \notag \\
= 
& \frac{mb_{\max}}{2} \int_{E(s, t)} r^{\varepsilon-b-2} 
  \bigg\{ \Big( q_{\star} + \frac{\varepsilon-b-2}{r} \Big) v^2 + 2 v \frac{\partial v}{\partial r} \bigg\} e^{-cr+w} \, dv_g
\end{align}
and 
\begin{align}
& \frac{|c+2k_0|b_{\max}}{4} \int_{S(t)} r^{\varepsilon-b-1} v^2 e^{-cr+w}\,dA 
  - \frac{|c+2k_0|b_{\max}}{4} \int_{S(s)} r^{\varepsilon-b-1} v^2 e^{-cr+w} 
  \, dA \notag \\
= 
& \frac{|c+2k_0|b_{\max}}{4} \int_{E(s, t)} r^{\varepsilon-b-1} 
  \bigg\{ \Big( q_{\star} + \frac{\varepsilon-b-1}{r} \Big) v^2 + 2 v \frac{\partial v}{\partial r} \bigg\} e^{-cr+w} \, dv_g
\end{align}
Thus, combining $(27)$, $(28)$, and $(29)$, we obtain 
\begin{align}
& \frac{1}{2} \int_{S(t)} r^{\varepsilon-b} \bigg\{ 
  2 \bigg( \frac{\partial v}{\partial r} \bigg)^2 
  + (\lambda + q_1) v^2 - |\nabla v|^2 + \frac{mb_{\max}}{r^2} v^2 
  + \frac{|c+2k_0|b_{\max}}{2r} v^2 \bigg\} e^{-cr+w} \, dA \notag \\
& + \frac{1}{2} \int_{S(s)} r^{\varepsilon-b} 
  \bigg\{ |\nabla v|^2 - (\lambda + q_1) v^2 
  + \Big( \rho'(r) + \frac{c}{2} \Big) q_{\star} v^2 
  - 2 \bigg( \frac{\partial v}{\partial r} \bigg)^2 \bigg\} 
  e^{-cr+w} \, dA \notag \\
\ge 
& \int_{E(s, t)} r^{\varepsilon-b-1} 
  \bigg\{ 2 k_0 r + 2m + \frac{\varepsilon-B_1-2b}{2} \bigg\} 
  \bigg( \frac{\partial v}{\partial r} \bigg)^2 e^{-cr+w} \, dv_g \notag \\
& + \int_{E(s, t)} r^{\varepsilon-b-1} 
  \Big\{ \frac{m}{r}(b_{\max} + rq_{\star}) 
  + \frac{|2k_0+c|b_{\max} + (2k_0+c)rq_{\star}}{2} \Big\} 
  \frac{\partial v}{\partial r} v e^{-cr+w} \, dv_g \notag \\
& + \frac{1}{2} \int_{E(s, t)} r^{\varepsilon-b-1} 
  \Bigg\{ \big(\lambda + (k_0)^2 \big)(\varepsilon - A_1) 
  - \frac{m^2}{r^2} \Big(1 + \frac{1}{m}\Big) (2 + A_1 - \varepsilon) 
  \notag \\
& \hspace{40mm} 
  - \frac{m}{r} 2k_0 (1+A_1-\varepsilon) 
  \notag \\
& \hspace{50mm} 
  + b_{\max} \Big( q_{\star} + \frac{\varepsilon-b-2}{r} \Big) \Big(\frac{m}{r}+ \frac{|2k_0+c|}{2}\Big) \Bigg\} v^2 e^{-cr+w} \, dv_g . 
\end{align}
Here, we have
\begin{align}
  \left| \frac{m}{r}(b_{\max} + rq_{\star}) + \frac{|2k_0+c|b_{\max} 
  + (2k_0+c)rq_{\star}}{2} \right|
  \le b_{\max} \Big\{ \frac{2m}{r} + |2k_0+c| \Big\}; 
\end{align}
moreover, since $q_{\star} \ge - \frac{b_{\max}}{r}$, we have
\begin{align}
& \big(\lambda + (k_0)^2 \big)(\varepsilon - A_1) 
  - \frac{m^2}{r^2} \Big(1 + \frac{1}{m}\Big) (2 + A_1 - \varepsilon) 
  - \frac{m}{r} 2k_0 (1+A_1-\varepsilon) \notag \\ 
& + b_{\max} \Big( q_{\star} + \frac{\varepsilon-b-2}{r} \Big) 
  \Big(\frac{m}{r}+ \frac{|2k_0+c|}{2}\Big) \notag \\
\ge 
& \big(\lambda + (k_0)^2 \big)(\varepsilon - A_1) 
  - \frac{m^2}{r^2} \bigg\{ \Big(1+\frac{1}{m}\Big)(2+A_1-\varepsilon) 
  + \frac{b_{\max}(b_{\max}+2+b-\varepsilon)}{m} \bigg\} \notag \\
& - \frac{m}{r} \bigg\{ 2k_0(1+A_1-\varepsilon) 
  + \frac{b_{\max}(b_{\max}+2+b-\varepsilon)|2k_0+c|}{m} \bigg\} \notag \\
= 
& \big(\lambda + (k_0)^2 \big)(\varepsilon - A_1) - \frac{m^2}{r^2} P_2
  - \frac{m}{r} P_1 , 
\end{align}
where we set 
\begin{align*}
& P_2:=P_2(m,A_1,B_1,b) 
  = \Big(1+\frac{1}{m}\Big)(2+A_1-\varepsilon) + \frac{b_{\max}(b_{\max}+2+b-\varepsilon)}{m} ; \\
& P_1:=P_1(k_0,A_1,B_1,c) 
  = 2k_0(1+A_1-\varepsilon) 
  + \frac{b_{\max}(b_{\max}+2+b-\varepsilon)|2k_0+c|}{m},
\end{align*}
for simplicity. 

Now, let $\alpha>0$ be a fixed constant, and we shall substitute $\beta = \varepsilon-b-1$ into the equation of Lemma $2.2$; then, we have
\begin{align}
&\int_{S(t)} r^{\varepsilon-b} \frac{\alpha}{r} v^2 e^{-cr+w} \, dA 
 - \int_{S(s)} r^{\varepsilon-b} \frac{\alpha}{r} v^2 e^{-cr+w} \, dA \notag \\
=
& \int_{E(s, t)} r^{\varepsilon-b-1} \bigg\{ 
  \frac{\alpha}{r} (rq_{\star}-b+ \varepsilon - 1) v^2 
  + 2 \alpha v \frac{\partial v}{\partial r} \bigg\} e^{-cr+w} \,dv_g \notag \\
\ge 
& \int_{E(s, t)} r^{\varepsilon-b-1} \bigg\{ 
  - \frac{\alpha}{r} ( 1 + A_1 - \varepsilon ) v^2 
  + 2 \alpha v \frac{\partial v}{\partial r} \bigg\} e^{-cr+w} \,dv_g .
\end{align}
Combining $(30)$, $(31)$, $(32)$, and $(33)$ makes 
\begin{align}
& \frac{1}{2} \int_{S(t)} r^{\varepsilon-b} \bigg\{ 
  2 \bigg( \frac{\partial v}{\partial r} \bigg)^2 
  + \bigg( \lambda + q_1 + \frac{mb_{\max}}{r^2} 
  + \frac{|c + 2 k_0| b_{\max} + 4 \alpha}{2r} \bigg) v^2 - |\nabla v|^2 \bigg\}  e^{-cr+w} \, dA \notag \\
& + \frac{1}{2} \int_{S(s)} r^{\varepsilon-b} 
  \bigg\{ |\nabla v|^2 - (\lambda + q_1) v^2 
  + \Big( \rho'(r) + \frac{c}{2} \Big) q_{\star} v^2 - \frac{2\alpha}{r}v^2 
  - 2 \bigg( \frac{\partial v}{\partial r} \bigg)^2 \bigg\} 
  e^{-cr+w} \, dA \notag \\
\ge 
& \int_{E(s, t)} r^{\varepsilon-b-1} 
  \bigg\{ 2 k_0 r + 2m + \frac{\varepsilon-B_1-2b}{2} \bigg\} 
  \bigg( \frac{\partial v}{\partial r} \bigg)^2 e^{-cr+w} \, dv_g \notag \\
& - \int_{E(s, t)} r^{\varepsilon-b-1} \bigg\{ 2 \alpha + b_{\max} \Big( \frac{2m}{r} + |2k_0+c| \Big) \bigg\} 
  \left| \frac{\partial v}{\partial r} v \right| e^{-cr+w} \, dv_g \notag \\
& + \frac{1}{2} \int_{E(s, t)} r^{\varepsilon-b-1} 
  \Bigg\{ \big(\lambda + (k_0)^2 \big)(\varepsilon - A_1) - \frac{m^2}{r^2} P_2
  - \frac{m}{r} \widetilde{P}_1 \Bigg\} v^2 e^{-cr+w} \, dv_g ,
\end{align}
where we set 
$$
  \widetilde{P}_1 := P_1 + \frac{2\alpha (1+A_1-\varepsilon)}{m}
$$
for simplicity. 
From $(22)$ and $(26)$, we see that
\begin{align*}
  \liminf_{t \to \infty} \int_{S(t)} r^{\varepsilon} \Bigg\{ 
  2 \bigg( \frac{\partial v}{\partial r} \bigg)^2 
  + \bigg( \lambda + q_1 + \frac{mA_1}{r^2} 
 + \frac{(c + 2 k_0) A_1 + 4 \alpha}{2r} \bigg) v^2 
 - |\nabla v|^2 \Bigg\} 
  e^{-cr+w} \, dA = 0. 
\end{align*}
Hence, taking an appropriate divergent sequence $\{ t_i \}$, putting $t = t_i$ in $(34)$ and letting $t_i \to \infty$, we obtain 
\begin{align}
& \frac{1}{2} \int_{S(s)} r^{\varepsilon-b} 
  \bigg\{ |\nabla v|^2 - (\lambda + q_1) v^2 
  + \Big( \rho'(r) + \frac{c}{2} \Big) q_{\star} v^2 - \frac{2\alpha}{r}v^2 
  - 2 \bigg( \frac{\partial v}{\partial r} \bigg)^2 \bigg\} 
  e^{-cr+w} \, dA \notag \\
\ge 
& \int_{E(s,\infty)} r^{\varepsilon-b-1} 
  \bigg\{ 2 k_0 r + 2m + \frac{\varepsilon-B_1-2b}{2} \bigg\} 
  \bigg( \frac{\partial v}{\partial r} \bigg)^2 e^{-cr+w} \, dv_g \notag \\
& - \int_{E(s,\infty)} r^{\varepsilon-b-1} 
  \bigg\{ 2 \alpha + b_{\max} \Big( \frac{2m}{r} + |2k_0+c| \Big) \bigg\} 
  \left| \frac{\partial v}{\partial r} v \right| e^{-cr+w} \, dv_g \notag \\
& + \frac{1}{2} \int_{E(s,\infty)} r^{\varepsilon-b-1} 
  \Bigg\{ \big(\lambda + (k_0)^2 \big)(\varepsilon - A_1) - \frac{m^2}{r^2} P_2
  - \frac{m}{r} \widetilde{P}_1 \Bigg\} v^2 e^{-cr+w} \, dv_g . 
\end{align}
Now, we shall set 
\begin{align*}
& C_2 := 2 k_0 r + 2m + \frac{\varepsilon-B_1-2b}{2} ~; \notag \\
& C_3 := 2 \alpha + b_{\max} \Big( \frac{2m}{r} + |2k_0+c| \Big) ~; \\
& C_4 := \big(\lambda + (k_0)^2 \big)(\varepsilon - A_1) - \frac{m^2}{r^2} P_2
  - \frac{m}{r} \widetilde{P}_1,
\end{align*}
and note that, in general, $a X^2 - b XY \ge - \frac{b^2}{4a} Y^2$ if $a > 0$; then, we have 
\begin{align}
  C_2 \bigg( \frac{\partial v}{\partial r} \bigg)^2 
  - C_3 \bigg| \frac{\partial v}{\partial r} v \bigg| 
  + \frac{C_4}{2} v^2 
\ge 
  \frac{1}{4} \bigg\{ 2C_4 -\frac{(C_3)^2}{C_2} \bigg\} v^2 .
\end{align}

In view of the definitions $P_1$, $P_2$, and $\widetilde{P_1}$, simple computation shows that, for any $0< \theta < 1$, there exist constants $m_0=m_0(A_1, B_1, b, c, k_0, \alpha, \theta)$ and $r_1=r_1(A_1, B_1, b, c, k_0, \alpha, \theta)$ such that, for any $m \ge m_0$ and $r \ge r_1$, the following inequality holds:
\begin{align}
  \frac{1}{4} \bigg\{ 2C_4 -\frac{(C_3)^2}{C_2} \bigg\} 
\ge 
  \frac{1}{2} \bigg\{ \overline{c}_1 
  - \Big(\frac{m}{r} \Big) 2k_0 \overline{c}_2 
  - \Big(\frac{m}{r} \Big)^2 \overline{c}_3 \bigg\}.
\end{align}
Here, we set
\begin{align*}
  \overline{c}_1 & = \overline{c}_1 (k_0) 
 := \big( \lambda + (k_0)^2 \big)(\varepsilon - A_1)(1 - \theta) ; \\
  \overline{c}_2 
& := \min \big\{ (1+A_1-\varepsilon)(1+\theta), \, \theta \big\} ; \\
  \overline{c}_3 
& := \min \big\{ (2+A_1-\varepsilon)(1+\theta), \, \theta \big\}, 
\end{align*}
because we do not know signs of constants $1+A_1-\varepsilon$ and $2+A_1-\varepsilon$. 
Note that constants, $\overline{c}_1$, $\overline{c}_2$, and $\overline{c}_3$, are positive. 
Thus, combining $(35)$, $(36)$, and $(37)$, we obtain, for $m \ge m_0$ and $r \ge r_1$, 
\begin{align*}
& s^{\varepsilon} \int_{S(s)} 
  \bigg\{ |\nabla v|^2 - (\lambda + q_1) v^2 
  + \Big( \rho'(r) + \frac{c}{2} \Big) q_{\star} v^2 \bigg\} 
  e^{-cr+w} \, dA \notag \\
& - s^{\varepsilon} \int_{S(s)} \bigg\{ \frac{2\alpha}{r}v^2 
  + 2 \bigg( \frac{\partial v}{\partial r} \bigg)^2 \bigg\} 
  e^{-cr+w} \, dA \notag \\
\ge 
& \int_{E(s, \infty)} r^{\varepsilon -1} 
  \bigg\{ \overline{c}_1 - \Big(\frac{m}{r} \Big) 2k_0 \overline{c}_2 
  - \Big(\frac{m}{r} \Big)^2 \overline{c}_3 \bigg\} v^2 e^{-cr+w} \, dv_g. 
\end{align*}
Multiplying both sides of the inequality above by $s^{1-2m-\varepsilon}$, and integrating it with respect to $s$ over $[x, \infty)$, we obtain, for $x \ge r_1$, 
\begin{align*}
& \int_{E(x , \infty)} r^{1-2m} \Big\{ |\nabla v|^2 - (\lambda + q_1)v^2 
  + \Big( \rho'(r) + \frac{c}{2} \Big) q_{\star} v^2 \Big\} 
  e^{-cr+w} \, dv_g \notag \\
& - \int_{E(x , \infty)} r^{1-2m} \bigg\{ \frac{2\alpha}{r} v^2 
  + 2 \Big( \frac{\partial v}{\partial r} \Big)^2 \bigg\} 
  e^{-cr+w} \, dv_g \notag \\
\ge 
& \int_{x}^{\infty} s^{1-2m-\varepsilon} \, ds \int_{E(s, \infty)} 
  r^{\varepsilon -1} 
  \bigg\{ \overline{c}_1 - \Big(\frac{m}{r} \Big) 2k_0 \overline{c}_2 
  - \Big(\frac{m}{r} \Big)^2 \overline{c}_3 \bigg\} v^2 e^{-cr+w} \, dv_g \\
\ge 
& \int_{x}^{\infty} s^{1-2m-\varepsilon} \bigg\{ \overline{c}_1 
  - \Big(\frac{m}{s} \Big) 2k_0 \overline{c}_2 
  - \Big(\frac{m}{s} \Big)^2 \overline{c}_3 \bigg\} \, ds \int_{E(s, \infty)} 
  r^{\varepsilon -1} v^2 e^{-cr+w} \, dv_g \\ 
\ge 
& \bigg\{ \overline{c}_1 
  - \Big(\frac{m}{x} \Big) 2k_0 \overline{c}_2 
  - \Big(\frac{m}{x} \Big)^2 \overline{c}_3 \bigg\} 
  \int_{x}^{\infty} s^{1-2m-\varepsilon} \, ds \int_{E(s, \infty)} 
  r^{\varepsilon -1} v^2 e^{-cr+w} \, dv_g .
\end{align*}
Substitution of the equation in Lemma $4.1$ into the inequality above yields 
\begin{align*}
& - \frac{1}{2} \frac{d}{dx}\bigg( x^{1-2m} \int_{S(x)} v^2 e^{-cr+w} 
  \, dA \bigg) 
  - \frac{1}{2} \int_{S(x)} r^{-2m} \big\{ 2m - 1 - rq_{\star} \big\} v^2 
  e^{-cr+w} \, dA \\
& - \int_{E(x, \infty)} r^{1-2m} \bigg\{ \frac{2\alpha}{r} v^2 
  + \frac{1}{r} \frac{\partial v}{\partial r} v 
  + 2 \Big( \frac{\partial v}{\partial r} \Big)^2 \bigg\} e^{-cr+w} \, dv_g \\
\ge 
& \bigg\{ \overline{c}_1 
  - \Big(\frac{m}{x} \Big) 2k_0 \overline{c}_2 
  - \Big(\frac{m}{x} \Big)^2 \overline{c}_3 \bigg\} 
  \int_{x}^{\infty} s^{1-2m-\varepsilon} \, ds \int_{E(s, \infty)} 
  r^{\varepsilon -1} v^2 e^{-cr+w} \, dv_g .
\end{align*}
Here, 
\begin{align*}
& \frac{2\alpha}{r} v^2 
  + \frac{1}{r} \frac{\partial v}{\partial r} v 
  + 2 \Big( \frac{\partial v}{\partial r} \Big)^2 
\ge 
  \frac{2\alpha}{r} \bigg\{ 1 - \frac{1}{16 \alpha r} \bigg\} v^2 
\ge 0, \qquad {\rm if}~ r \ge \frac{1}{16\alpha} ~; \\ 
& 2m -1 - r q_{\star} \ge 2m \Big( 1 - \frac{1+B_1}{2m} \Big) 
  \ge 2 (1 - \theta) m, \qquad {\rm if}~m \ge \frac{1+B_1}{2\theta}. 
\end{align*}
Therefore, we obtain, for any $x \ge r_2 := \max \{ r_1, \frac{1}{16 \alpha} \}$ and $m \ge m_1 := \max \{ m_0, \frac{1+B_1}{2\theta} \}$, 
\begin{align*}
& - \frac{1}{2} \frac{d}{dx}\bigg( x^{1-2m} \int_{S(x)} v^2 e^{-cr+w} 
  \, dA \bigg) 
  - (1 - \theta) \frac{m}{x} \bigg( x^{1-2m} \int_{S(x)} v^2 e^{-cr+w} 
  \, dA \bigg) \\
\ge 
& \bigg\{ \overline{c}_1 
  - \Big(\frac{m}{x} \Big) 2k_0 \overline{c}_2 
  - \Big(\frac{m}{x} \Big)^2 \overline{c}_3 \bigg\} 
  \int_{x}^{\infty} s^{1-2m-\varepsilon} \, ds \int_{E(s, \infty)} 
  r^{\varepsilon -1} v^2 e^{-cr+w} \, dv_g .
\end{align*}
For $x \ge r_2$ and $m \ge m_1$, we shall set
\begin{align*}
& \frac{m}{x} = \frac{-k_0\overline{c}_2 + \sqrt{(k_0)^2 (\overline{c}_2)^2 + \overline{c}_3 \big( \lambda + (k_0)^2 \big)(\varepsilon - A_1)(1 - \theta)}}{\overline{c}_3} =: \overline{c}_7(k_0) ~;\\ 
& F(x) := x^{1-2m} \int_{S(x)} v^2 e^{-cr+w} \, dA 
  = x \int_{S(x)} e^{2k_0r} u^2 e^{-cr+w} \, dA , 
\end{align*}
where we shall recall $\overline{c}_1=\big( \lambda + (k_0)^2 \big)(\varepsilon - A_1)(1 - \theta)$. 
Then, the inequality above reduced to 
\begin{align*}
  F'(x) + 2(1 - \theta) \overline{c}_7 F(x) \le 0 
  \qquad 
  {\rm for}~~x \ge r_3 := \max \Big\{ r_2, \frac{m_1}{\overline{c}_7} \Big\}. 
\end{align*}
Thus, $G(x):= e^{2(1 - \theta) \overline{c}_7 x} F(x)$ satisfies $G'(x)$ for $x \ge r_3$, and hence, $G(x) \le G(r_3)$ for $x \ge r_3$, that is, 
\begin{align*}
  x \int_{S(x)} e^{2k_0r} u^2 e^{-cr+w} \, dA = F(x) 
  \le e^{-2(1 - \theta) \overline{c}_7 x} G(r_3). 
\end{align*}
The desired assertion $(23)$ follows from this inequality and the definition of $\overline{c}_7 = \overline{c}_7(k_0)$ above, where we shall recall that $\overline{c}_2$ and $\overline{c}_3$ are independent of $k_0$. 

Now, we shall consider an increasing sequence $\{ a_n \}_{n=0}^{\infty}$ of nonnegative numbers defined by 
\begin{align*}
 a_{n+1} = a_n + \sqrt{ \big\{ (\overline{c}_4)^2 + \overline{c}_5 \big\} (a_n)^2 + \overline{c}_6 } - \overline{c}_4 a_n, \quad a_0 = 0. 
\end{align*}
Then, $\lim_{n \to \infty} a_n = \infty$. 
Indeed, if contrary, there exists $a_{\infty}:= \lim_{n\to \infty} a_n \in (0, \infty)$. 
Taking the limit, we have  $a_{\infty} = a_{\infty} + \sqrt{ \{ (\overline{c}_4)^2 + \overline{c}_5 \} (a_{\infty})^2 + \overline{c}_6 } - \overline{c}_4 a_{\infty}$, and hence, $\sqrt{ \{ (\overline{c}_4)^2 + \overline{c}_5 \} (a_{\infty})^2 + \overline{c}_6 } = \overline{c}_4 a_{\infty}$; this contradicts the facts: $\overline{c}_5 > 0$ and $\overline{c}_6 > 0$. 
Therefore, by virtue of $(23)$ combined with $\lim_{n \to \infty} a_n = \infty$, we obtain 
\begin{align}
  \int_{E(r_0, \infty)} e^{ k r} u^2 e^{-cr+w} \,dv_g < \infty 
 \quad {\rm for~any}~~0< k < \infty. 
\end{align}
Next, we shall show that, $(38)$ implies that
\begin{align}
  \int_{E(r_0, \infty)} e^{ k r} |\nabla u|^2 e^{-cr+w} \,dv_g < \infty 
 \quad {\rm for~any}~~0< k < \infty. 
\end{align}
Since $(38)$ implies that
\begin{align*}
 \liminf_{t \to \infty} \int_{S(t)} e^{ k r} u^2 e^{-cr+w} \,dA = 0, 
\end{align*}
taking an appropriate divergent sequence $\{ t_i \}$, and letting $t = t_i \to \infty$ in the equation of Lemma $4.2$, we obtain 
\begin{align*}
& 2\int_{E(s, \infty)} e^{kr} \frac{\partial u}{\partial r} u 
  e^{-cr+w} \,dv_g \\
= 
& - \int_{S(s)} e^{kr} u^2 e^{-cr+w} \,dA 
  - \int_{E(s, \infty)} e^{kr} \big\{ k + q_{\star} \big\} u^2 
  e^{-cr+w} \,dv_g , 
\end{align*}
where the right hand side of this equation is finite by $(38)$. 
In particular, we have 
\begin{align}
 \liminf_{R \to \infty} e^{ k R} 
 \bigg| \int_{S(R)} \frac{\partial u}{\partial r} u e^{-cr+w} \,dA \bigg| = 0. 
\end{align}
Now, we shall put $\rho=0$ and $\psi = e^{kr}$ in Proposition $2.1$; then, $v=u$ and $q_1=0$, and hence, we have 
\begin{align*}
& \int_{E(s, t)} e^{kr} \Big\{ |\nabla u|^2 
  - \Big( \lambda - \frac{c}{2}q_{\star} \Big) u^2 \Big\} e^{-cr+w} \,dv_g \\ 
=
& \int_{S(t)} e^{kr} \frac{\partial u}{\partial r} u e^{-cr+w} \,dA 
  - \int_{S(s)} e^{kr} \frac{\partial u}{\partial r} u e^{-cr+w} \,dA 
  - k \int_{E(s, t)} e^{kr} \frac{\partial u}{\partial r} u e^{-cr+w} 
  \, dv_g \\ 
\le  
& \int_{S(t)} e^{kr} \frac{\partial u}{\partial r} u e^{-cr+w} \,dA 
  - \int_{S(s)} e^{kr} \frac{\partial u}{\partial r} u e^{-cr+w} \,dA 
  + \frac{k^2}{2} \int_{E(s, t)} e^{kr} u^2 e^{-cr+w} \,dv_g \\
& + \frac{1}{2} \int_{E(s, t)} e^{kr} |\nabla u|^2 e^{-cr+w} \,dv_g .
\end{align*}
Therefore, we have 
\begin{align*}
& \frac{1}{2} \int_{E(s, t)} e^{kr} |\nabla u|^2 e^{-cr+w} \,dv_g \\ 
\le  
& \int_{S(t)} e^{kr} \frac{\partial u}{\partial r} u e^{-cr+w} \,dA 
  - \int_{S(s)} e^{kr} \frac{\partial u}{\partial r} u e^{-cr+w} \,dA \\ 
& \hspace{30mm} + \int_{E(s, t)} e^{kr} \Big( \lambda - \frac{c}{2} q_{\star} 
  + \frac{k^2}{2} \Big) u^2 e^{-cr+w} \,dv_g .
\end{align*}
In view of $(40)$, by taking appropriate divergent sequence $\{t_i\}$, substituting it $t=t_i$ into the inequality above, and letting $t_i \to \infty$, we obtain \begin{align*}
& \frac{1}{2} \int_{E(s, \infty)} e^{kr} |\nabla u|^2 e^{-cr+w} \,dv_g \\ 
\le  
& - \int_{S(s)} e^{kr} \frac{\partial u}{\partial r} u e^{-cr+w} \,dA 
  + \int_{E(s, \infty)} e^{kr} 
  \Big( \lambda - \frac{c}{2} q_{\star} + \frac{k^2}{2} \Big) u^2 
  e^{-cr+w} \,dv_g .
\end{align*}
Since the right hand side of this inequality is finite by $(38)$, we obtain $(39)$. 
Thus, we have proved Theorem $4.1$. 
\end{proof}
\section{Vanishing}
\begin{thm}
Under the assumption of Theorem $4.1$, 
\begin{align*}
  u \equiv 0 \qquad {\rm on}~~E(r_0,\infty).
\end{align*}
\end{thm}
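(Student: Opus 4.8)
The plan is to exploit the super-exponential decay obtained in Theorem $4.1$ together with a unique-continuation-type argument run at infinity. By Theorem $4.1$, for every $k>0$ we have $\int_{E(r_0,\infty)} e^{kr}\{u^2+|\nabla u|^2\}e^{-cr+w}\,dv_g<\infty$. The idea is to plug an exponentially growing weight $\rho(r)=kr$ into the Rellich-type identity of Proposition $2.3$ (or directly into Proposition $2.1$ with $\psi=e^{2kr}$), take $k\to\infty$, and show that the only way all the weighted integrals can stay finite is if $u$ vanishes identically on a neighbourhood of infinity; then a classical unique continuation theorem for the elliptic equation $\Delta_g f+\langle\nabla w,\nabla f\rangle+\alpha f=0$ on the connected end $E$ propagates $u\equiv 0$ back to all of $E(r_0,\infty)$.

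Concretely, first I would set $\rho(r)=kr$ and $v=e^{kr}u$ in Proposition $2.3$ (with a suitably chosen $\gamma$ and $\varepsilon$, mirroring the choice $(26)$ of Section $4$). The gradient term $r(\nabla dr)(\nabla v,\nabla v)-\tfrac12(\varepsilon+rq_\star-b)\widetilde g(\nabla v,\nabla v)$ is nonnegative for $A_1<\varepsilon<2\widetilde\alpha_1-B_1$ as before, and the coefficient $\{2kr+2m+\cdots\}$ of $(\partial v/\partial r)^2$ is large and positive; the cross term is controlled by $ab$-Young against these, exactly as in the passage from $(30)$ to $(37)$. The crucial new feature is the $v^2$-coefficient: it now contains $(\lambda+k^2)(\varepsilon-A_1)$ in place of $(\lambda+k_0^2)(\varepsilon-A_1)$, so after the same Young-inequality bookkeeping one gets, for $x$ large and all large $m$, a differential inequality of the form $F'(x)+2(1-\theta)\overline{c}\,F(x)\le 0$ with $F(x)=x\int_{S(x)}e^{2kr}u^2e^{-cr+w}\,dA$ and with $\overline{c}\to\infty$ as $k\to\infty$. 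This forces $\int_{S(x)}e^{2kr}u^2e^{-cr+w}\,dA\le C(k)e^{-2(1-\theta)\overline{c}x}$ for every $k$; equivalently $\int_{S(x)}u^2e^{w}\,dA$ decays faster than $e^{-Nr}$ for every $N$.

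From such arbitrarily fast decay I would conclude vanishing near infinity. One clean route: combine the decay with the energy identity (Proposition $2.1$, $\rho=0$, $\psi=1$, or Lemma $4.2$) to get that $\int_{E(s,\infty)}\{|\nabla u|^2+u^2\}e^{-cr+w}\,dv_g$ decays faster than any exponential in $s$, and then feed this into a Carleman-type / Aronszajn unique continuation estimate for the uniformly elliptic operator $\Delta_g+\langle\nabla w,\nabla\cdot\rangle+\alpha$ on $E(r_0,\infty)$: a solution vanishing to infinite order at infinity on a connected manifold must vanish on a neighbourhood of infinity, and then, again by unique continuation and connectedness of $E$, on all of $E(r_0,\infty)$. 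Alternatively, and more in the spirit of the earlier sections, one can iterate the weighted estimate so that $\int_{S(x)}u^2 e^w\,dA\cdot e^{Nx}\to 0$ for every $N$ while simultaneously, by a lower bound coming from the ODE comparison along radial geodesics, $\int_{S(x)}u^2 e^w\,dA$ cannot decay faster than a fixed exponential unless it is eventually zero; this contradiction forces $u\equiv 0$ on some $E(x_0,\infty)$, and unique continuation finishes the job on $E(r_0,\infty)$.

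The main obstacle I expect is making the $k\to\infty$ limit rigorous: one must verify that the constants $m_0,m_1,r_1,r_2,r_3$ and the threshold behaviour in $(37)$ depend on $k$ only through harmless quantities, so that letting $k\to\infty$ genuinely yields decay faster than every exponential (and not merely a $k$-dependent rate that degenerates). A secondary delicate point is the boundary terms at finite radius $s=r_3$: the factor $G(r_3)=e^{2(1-\theta)\overline{c}r_3}F(r_3)$ blows up in $k$, so one cannot simply say "$\to 0$"; instead one needs the genuinely faster-than-exponential conclusion, which is why the final step is phrased as infinite-order vanishing plus unique continuation rather than a direct estimate. Handling the unique continuation across $\partial E$ (where $w$ and the metric are merely smooth, which is fine) and down to all of $E(r_0,\infty)$ is then standard given connectedness of $E$.
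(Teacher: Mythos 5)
Your first step (running Proposition 2.3 with $\rho(r)=kr$ and iterating the resulting differential inequality to get super-exponential decay of $\int_{S(x)}u^2e^{-cr+w}\,dA$) is essentially a restatement of what Theorem 4.1 already provides, so it is harmless but does not advance the argument. The genuine gap is in your final step: you pass from ``decay faster than every exponential'' to ``$u\equiv 0$ near infinity'' by invoking a ``Carleman-type / Aronszajn unique continuation estimate: a solution vanishing to infinite order at infinity must vanish on a neighbourhood of infinity.'' No such off-the-shelf theorem is available here. Aronszajn's theorem concerns infinite-order vanishing at an interior \emph{point}; ``unique continuation from infinity'' for super-exponentially decaying solutions is precisely the nontrivial statement that this section must prove, and it requires quantitative geometric input (note that exponentially decaying nonzero eigenfunctions do exist in this setting, cf.\ Theorems 1.2 and 1.3, so the rate genuinely matters). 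Your alternative route, an ``ODE comparison along radial geodesics'' giving a lower bound on $\int_{S(x)}u^2$, is likewise only asserted; establishing such a lower bound is equivalent in difficulty to the vanishing theorem itself. You correctly identify the $k\to\infty$ limit and the blow-up of the constant $G(r_3)$ as the obstacles, but you do not overcome them.

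The paper's proof closes this by a much more elementary device. With $v=e^{kr}u$ and $\gamma=\varepsilon-b$, the bulk terms in Proposition 2.3 are made nonnegative for all $k\ge k_1$ and $r\ge\max\{r_0,1\}$, because the good coefficient $(\lambda+k^2)(\varepsilon-A_1)$ grows like $k^2$ while the Young-inequality penalty $\frac{(2k+c)^2(b_{\max})^2}{4(4kr+\cdots)}$ grows only like $k$. Letting $t\to\infty$ along the sequence furnished by Theorem 4.1, the identity collapses to a surface inequality at each \emph{fixed} $s$, namely $\int_{S(s)}\{|\nabla v|^2-2(\partial v/\partial r)^2\}e^{-cr+w}\,dA\ge 0$, i.e.\ $-k^2I_1(s)-kI_2(s)+I_3(s)\ge 0$ with $I_1(s)=\int_{S(s)}u^2e^{-cr+w}\,dA$. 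Since this quadratic inequality in $k$ holds for all large $k$ with $s$ fixed, the leading coefficient forces $I_1(s)=0$, so $u$ vanishes on the open set $E(r_1,\infty)$; only then is the standard interior unique continuation theorem invoked. You would need to supply this step (or an equivalent Carleman argument with explicit weights) for your proposal to be complete.
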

\begin{proof}
Let $k\ge 1$ be a fixed constant, and take $\varepsilon$ so that
\begin{align}
  2 \widetilde{\alpha}_1 - B_1 > \varepsilon >A_1. 
\end{align}
We shall set $\rho (r) = kr$ and $\gamma = \varepsilon - b$ in Proposition $2.3$. 
Then, we have
\begin{align}
& v = e^{kr} u ~; \quad q_1 = k^2 ~; \\
& 2 r(\nabla dr)(\nabla v, \nabla v) 
  - (\varepsilon + rq_{\star}-b) \widetilde{g}(\nabla v, \nabla v) 
\ge 
  \big( 2 \widetilde{\alpha}_1 - \varepsilon - B_1 \big) 
  \widetilde{g}(\nabla v, \nabla v) \ge 0, \notag
\end{align}
and hence, 
\begin{align}
& \int_{S(t)} r^{\varepsilon-b} \bigg\{ 
  2 \bigg( \frac{\partial v}{\partial r} \bigg)^2 
  + \big( \lambda + k^2 \big)v^2 - |\nabla v|^2 \bigg\} 
  e^{-cr+w} \, dA \notag \\ 
& + \int_{S(s)} r^{\varepsilon-b} \bigg\{ |\nabla v|^2 
  - (\lambda + k^2) v^2 
  - 2 \bigg( \frac{\partial v}{\partial r} \bigg)^2 \bigg\} 
  e^{-cr+w} \, dA \notag \\ 
\ge 
& \int_{E(s, t)} r^{\varepsilon-b-1} \big\{ 4kr + \varepsilon - b - rq_{\star} 
  \big\} \bigg( \frac{\partial v}{\partial r} \bigg)^2 
  e^{-cr+w} \, dv_g \notag \\
& + \int_{E(s, t)} r^{\varepsilon-b-1} (2k + c) rq_{\star} 
  \frac{\partial v}{\partial r} v e^{-cr+w} \, dv_g \notag \\
& + \int_{E(s, t)} r^{\varepsilon-b-1} (\lambda + k^2)
  (\varepsilon + rq_{\star}-b)   v^2 e^{-cr+w} \, dv_g \notag \\ 
\ge 
& \int_{E(s, t)} r^{\varepsilon-b-1} 
  \big\{ 4kr + \varepsilon - 2b + B_1 \big\} 
  \bigg( \frac{\partial v}{\partial r} \bigg)^2 
  e^{-cr+w} \, dv_g \notag \\
& - \int_{E(s, t)} r^{\varepsilon-b-1} |2k + c| b_{\max} 
  \bigg| \frac{\partial v}{\partial r} v \bigg| e^{-cr+w} \, dv_g \notag \\
& + \int_{E(s, t)} r^{\varepsilon-b-1} (\lambda + k^2)(\varepsilon - A_1) v^2 
  e^{-cr+w} \, dv_g .
\end{align}
Now, in general, when $a>0$, $a X^2 - b XY \ge - \frac{b^2}{4a} Y^2$. 
Hence, 
\begin{align}
& \big\{ 4kr + \varepsilon - 2b + B_1 \big\} 
  \bigg( \frac{\partial v}{\partial r} \bigg)^2 
  - |2k + c| b_{\max} 
  \bigg| \frac{\partial v}{\partial r} v \bigg|
  + (\lambda + k^2)(\varepsilon - A_1) v^2 \notag \\ 
\ge 
& \left\{ (\lambda + k^2)(\varepsilon - A_1) 
  - \frac{(2k + c)^2 (b_{\max})^2}{4(4kr + \varepsilon - 2b + B_1)} \right\}   v^2 \notag \\ 
=
& \left\{ \lambda (\varepsilon - A_1) 
  + k \left( k(\varepsilon - A_1) - \frac{(2+\frac{c}{k})^2 (b_{\max})^2}{4(4r + \frac{\varepsilon - 2b + B_1}{k})} \right)
  \right\} v^2 .
\end{align}
Since $\varepsilon - A_1 >0$, there exist positive constant $k_1 = k_1( A_1, B_1, \varepsilon, b, c )$ such that the right hand side of $(44)$ is nonnegative for $k \ge k_1$ and $ r \ge \max\{ r_0, 1 \}$. 
Therefore, combining $(43)$ and $(44)$, we obtain, for $k \ge k_1$ and $t > s \ge \max\{ r_0, 1 \}$, 
\begin{align}
& \int_{S(t)} r^{\varepsilon-b} \bigg\{ 
  2 \bigg( \frac{\partial v}{\partial r} \bigg)^2 
  + \big( \lambda + k^2 \big)v^2 - |\nabla v|^2 \bigg\} 
  e^{-cr+w} \, dA \notag \\ 
& + \int_{S(s)} r^{\varepsilon-b} \bigg\{ |\nabla v|^2 
  - (\lambda + k^2) v^2 
  - 2 \bigg( \frac{\partial v}{\partial r} \bigg)^2 \bigg\} 
  e^{-cr+w} \, dA \ge 0 .
\end{align}
Here, in view of $(21)$ and $(42)$, we have
\begin{align*}
  \liminf _{t\to \infty} \int_{S(t)} r^{\varepsilon-b} \bigg\{ 
  2 \bigg( \frac{\partial v}{\partial r} \bigg)^2 
  + \big( \lambda + k^2 \big)v^2 - |\nabla v|^2 \bigg\} 
  e^{-cr+w} \, dA = 0.
\end{align*}
Hence, taking an appropriate divergent sequence $\{ t_i \}$ and letting $t = t_i \to \infty$ in $(45)$, we obtain, for any $k\ge k_1$ and $s \ge \max\{ r_0, 1 \}$,
\begin{align*}
  \int_{S(s)} \bigg\{ |\nabla v|^2 
  - 2 \bigg( \frac{\partial v}{\partial r} \bigg)^2 \bigg\} 
  e^{-cr+w} \, dA \ge 0 .
\end{align*}
Since $v = e^{kr} u $, we have 
\begin{align*}
  |\nabla v|^2 - 2 \bigg( \frac{\partial v}{\partial r} \bigg)^2 
= e^{2kr} \bigg\{ - k^2 u^2 - 2k \frac{\partial u}{\partial r} u 
  + |\nabla u|^2 - 2 \bigg( \frac{\partial u}{\partial r} \bigg)^2 \bigg\} .
\end{align*}
Therefore, we obtain, for any $k\ge k_1$ and $s \ge r_1 := \max\{ r_0, 1 \}$, 
\begin{align}
  - k^2 I_1(s) - k I_2(s) + I_3(s) \ge 0,
\end{align}
where 
\begin{align*}
& I_1(s) := \int_{S(s)} u^2 e^{-cr+w} \, dA ~; \quad 
  I_2(s) := 2 \int_{S(s)} \frac{\partial u}{\partial r} u e^{-cr+w} \, dA~; \\ 
& I_3(s) := \int_{S(s)} \bigg\{ |\nabla u|^2 - 2 \bigg( \frac{\partial u}{\partial r} \bigg)^2 \bigg\} e^{-cr+w} \, dA .
\end{align*}
Thus, for any fixed $s \ge r_1$, letting $k \to \infty$ in $(46)$, we obtain $I_1(s)=0$, that is, $u\equiv 0$ on $E( r_1, \infty )$. 
The unique continuation theorem implies that $u\equiv 0$ on $E$. 
\end{proof}

\section{Radiation condition and growth property}
In this section, we shall briefly explain the relationship between the radiation conditions and the growth property $(*_3)$. 
In order to prove the limiting absorption principle in the author's paper \cite{K4}, it is an important step to show $u \equiv 0$ under the assumption $(*_3)$ (see Lemma $8.1$ in \cite{K4}). 

First, we shall introduced some terminology: for $s \in \mathbb{R}$, let $L^2_s(E, v_g)$ denote the space of all complex-valued measurable functions $f$ such that $\left|(1+r)^s f\right|$ is square integrable on $E$ with respect to $v_g$, and set 
\begin{align*}
  \| f \|_{L^2_s (E, v_g)} := \int_E \left( 1 + r \right)^{2s} |f|^2 \, dv_g .
\end{align*}
We also denote ${\it \Pi}_{+} := \{ x + iy \in \mathbb{C} \mid x > 0, \, y \ge 0 \}$ and ${\it \Pi}_{-} := \{ x + iy \in \mathbb{C} \mid x > 0, \, y \le 0 \}$. 

In \cite{K4}, the author studied Riemannian manifolds $(M,g)$ having ends $E_1, E_2, \cdots , E_m$ with radial coordinates, each of which satisfies either (I) or (II) below: 
\begin{align*}
& {\rm (I)} \hspace{6mm}
 \begin{cases}
  \ \displaystyle \nabla dr 
  \ge \Big\{ \frac{a_j}{r} + O (r^{-1-\delta}) \Big\} \, \widetilde{g} 
  \quad & {\rm on}~~E_j, \vspace{2mm} \\
  \ \displaystyle \Delta_g r = \frac{b_j}{r} + O (r^{-1-\delta}) 
  \quad & {\rm on}~~E_j; 
 \end{cases}
\\
& {\rm (II)} \hspace{5mm}
 \begin{cases}
  \ \displaystyle \nabla dr 
  \ge \Big\{ \frac{a_j}{r} + O (r^{-1-\delta}) \Big\} \, \widetilde{g} 
  \quad & {\rm on}~~E_j, \vspace{2mm} \\
  \ \displaystyle \Delta_g r = \beta_j + O (r^{-1-\delta}) 
  \quad & {\rm on}~~E_j, 
 \end{cases}
\end{align*}
where $a_j > 0$, $b_j > 0$, $\beta_j > 0$, and $\delta \in (0,1)$ are constants. 
For a solution $u$ of $- \Delta _g u - zu = f$ on $M$ and $f \in L^2_{\frac{1}{2}+s}(M, v_g)$, the author \cite{K4} introduced the radiation conditions as follows. 
For $E_j$ satisfying (I) and $z \in {\it \Pi}_{\pm}$,
\begin{align}
  u \in L^2_{- \frac{1}{2} - s'} \big( E_j, v_g \big)~;~
  \frac{\partial u}{\partial r} 
  + \left( \frac{b_j}{2r} \mp i \sqrt{z} \right) u \in 
  L^2_{-\frac{1}{2}+s} \big( E_j, v_g \big). 
\end{align}
For $E_j$ satisfying (II) and $z \in {\it \Pi}_{\pm}$ satisfying ${\rm Re}z > \frac{\beta^2}{4}$,
\begin{align}
  u \in L^2_{- \frac{1}{2} -s'} \big( E_j, v_g \big)~;~
  \frac{\partial u}{\partial r} 
  + \left( \frac{\beta_j}{2} \mp i \sqrt{z-\frac{(\beta_j)^2}{4}} \right)u \in 
  L^2_{-\frac{1}{2}+s} \big( E_j, v_g \big).
\end{align}
Here, $0 < s' < s < \min \{ \frac{1}{2}, a_{\min} \}$ are constants; $a_{\min}:= \min \{ a_j \mid 1 \le j \le m \}$; the square roots takes the principal value. 
(The condition $(48)$ above can be seen to be equivalent to $(14)$ in \cite{K4} by taking the multiplication operator $e^{\frac{\beta_j}{2}r}$ into account). 

Then, the following holds:
\begin{prop}
Let $u$ be a solution of $-\Delta_g u + \lambda u =0$ on an end $E$ with radial coordinates. 
Then, 
\begin{enumerate}[{\rm (1)}]
\item Assume that $u$ satisfies the radiation condition $(47)$ with $E_j = E$ and $z = \lambda > 0$. Then, $(*_3)$ with $\gamma = s -s'$ holds. Hence, if $E$ satisfies {\rm (I)} with $E_j = E$, then $u \equiv 0$ by Theorem $1.1$. 
\item Assume that $u$ satisfies the radiation condition $(48)$ with $E_j = E$ and $z = \lambda > \frac{\beta^2}{4}$. Then, $(*_3)$ with $\gamma = s -s'$ holds. Hence, if $E$ satisfies {\rm (II)} with $E_j = E$, then $u \equiv 0$ by Theorem $1.1$. 
\end{enumerate}
\end{prop}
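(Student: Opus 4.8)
I will prove (1); (2) is identical after replacing $\frac{b_j}{2r}$ by $\frac{\beta_j}{2}$ and $\sqrt{z}$ by $\sqrt{z-\frac{(\beta_j)^{2}}{4}}$ throughout, so that in both cases $w\equiv0$. Put
\[
 P:=\frac{\partial u}{\partial r}+\Bigl(\frac{b_j}{2r}\mp i\sqrt{\lambda}\Bigr)u,\qquad
 \phi(t):=\int_{S(t)}|u|^{2}\,dA,\qquad \psi(t):=\int_{S(t)}|P|^{2}\,dA.
\]
Since $w\equiv0$, $(*_3)$ with $\gamma=s-s'$ asserts $\liminf_{t\to\infty}t^{\,s-s'}\!\int_{S(t)}\{(\partial_r u)^2+u^2\}\,dA=0$, and from $\partial_r u=P-(\frac{b_j}{2r}\mp i\sqrt\lambda)u$ one has $\int_{S(t)}\{(\partial_r u)^2+u^2\}\,dA\le2\psi(t)+C_0\phi(t)$ for $t$ large ($C_0=C_0(b_j,\lambda)$); so the task reduces to controlling $\phi$ and $\psi$ along one divergent sequence. \emph{Step 1 (unwinding).} By the coarea formula $dv_g=dA\,dr$, the two memberships in $(47)$ are exactly $\int^{\infty}(1+t)^{-1-2s'}\phi(t)\,dt<\infty$ and $\int^{\infty}(1+t)^{-1+2s}\psi(t)\,dt<\infty$; since $2s<1$ the second forces a sequence $t_k\uparrow\infty$ with $t_k^{\,2s}\psi(t_k)\to0$. (The first bound alone is not enough for $(*_3)$: it only excludes $\phi\gtrsim t^{2s'}$ along sequences, and then $t^{\,s-s'}\phi(t)$ may still diverge; the eigenvalue equation must be used.)

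\emph{Step 2 (conserved radial flux).} Because $u$ solves $\Delta_g u+\lambda u=0$ on $E$, Green's formula on $E(s,t)$ shows $\int_{S(t)}\operatorname{Im}(\bar u\,\partial_r u)\,dA=\int_{S(s)}\operatorname{Im}(\bar u\,\partial_r u)\,dA=:J$ for all $s,t\ge r_0$. Substituting $\partial_r u=P-(\frac{b_j}{2r}\mp i\sqrt\lambda)u$ and using $\operatorname{Im}\bigl(\frac{b_j}{2r}|u|^2\bigr)=0$ gives, for all $t\ge r_0$,
\[
 \sqrt{\lambda}\,\phi(t)=\pm\Bigl(J-\int_{S(t)}\operatorname{Im}(\bar uP)\,dA\Bigr),\qquad
 \Bigl|\int_{S(t)}\operatorname{Im}(\bar uP)\,dA\Bigr|\le\phi(t)^{1/2}\psi(t)^{1/2},
\]
the sign matching ${\it \Pi}_{\pm}$. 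Young's inequality turns the left identity into $\phi(t)\le\frac{2|J|}{\sqrt\lambda}+\frac1\lambda\psi(t)$, so $\{\phi(t_k)\}$ is bounded; hence $\int_{S(t_k)}\operatorname{Im}(\bar uP)\,dA\to0$ and $\sqrt\lambda\,\phi(t_k)\to|J|$.

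\emph{Step 3 (the flux vanishes — the crux).} Insert $\partial_r u=P-(\frac{b_j}{2r}\mp i\sqrt\lambda)u$ (via $\partial_r|u|^2=2\operatorname{Re}(\bar u\,\partial_r u)$) and $\Delta_g r=\frac{b_j}{r}+O(r^{-1-\delta})$ into Lemma $2.2$ (with $\rho\equiv0$, $c=0$, $w\equiv0$, $v=u$) at the exponent $\beta=s-s'-1$. Every resulting bulk integrand is then of type $r^{\beta-1}|u|^2$, $r^{\beta-1-\delta}|u|^2$, or $r^{\beta}\bar uP$, and by Step 1 (together with $s+s'<1$) all are absolutely integrable over $E$: the first because $\beta-1\le-1-2s'$, and the last because $\int_E r^{\beta}|\bar uP|\,dv_g\le\bigl(\int_E r^{\beta+\alpha}|u|^2\bigr)^{1/2}\bigl(\int_E r^{\beta-\alpha}|P|^2\bigr)^{1/2}<\infty$ for $\alpha=-(s+s')$, which is possible exactly because $\beta\le s-s'-1$. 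Hence $\lim_{t\to\infty}t^{\,s-s'-1}\phi(t)$ exists and is finite; were it positive, $\phi(t)\sim\mathrm{const}\cdot t^{\,1-(s-s')}$ would make $\int^{\infty}(1+t)^{-1-2s'}\phi(t)\,dt$ diverge (exponent $-s-s'>-1$), contradicting Step 1. Thus $\phi(t)=o\bigl(t^{\,1-(s-s')}\bigr)$. Combining this decay with the constancy of $J$ from Step 2 then forces $J=0$; this is the technical heart, and the step I expect to be the main obstacle — the bare weighted $L^2$-radiation condition alone yields only $\phi=o(t^{\,1-(s-s')})$, which is still unbounded, so one genuinely needs both the eigenvalue equation and the full strength of $(47)$/$(48)$ to kill $J$. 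Once $J=0$, Step 2 collapses to $\sqrt\lambda\,\phi(t)=\mp\int_{S(t)}\operatorname{Im}(\bar uP)\,dA$, whence $\lambda\,\phi(t)\le\psi(t)$ for all large $t$.

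\emph{Step 4 (conclusion).} By Step 3, $\int_{S(t)}\{(\partial_r u)^2+u^2\}\,dA\le2\psi(t)+C_0\phi(t)\le C_1\psi(t)$ for $t$ large, so along the sequence of Step 1,
\[
 t_k^{\,s-s'}\int_{S(t_k)}\{(\partial_r u)^2+u^2\}\,dA\le C_1\,t_k^{\,s-s'}\psi(t_k)=C_1\,t_k^{-s-s'}\bigl(t_k^{\,2s}\psi(t_k)\bigr)\longrightarrow0,
\]
i.e.\ $(*_3)$ with $\gamma=s-s'$. Finally, Theorem $1.1$ applies: for $E$ satisfying (I) take $c=0$, $b=b_j$, $\widetilde{\alpha}_1$ slightly below $a_j$, and $A_1,B_1$ small (legitimate since $\nabla dr\ge\{\tfrac{a_j}{r}+O(r^{-1-\delta})\}\widetilde{g}$ and $r\Delta_g r-b_j=O(r^{-\delta})\to0$), so that $(*_1),(*_2)$ hold, $(*_5)$ becomes $2(s-s')>A_1+B_1$, and $(*_6)$ becomes $\lambda>0$; for $E$ satisfying (II) take $c=\beta_j$, $b=0$, so that $(*_6)$ becomes $\lambda>\tfrac{(\beta_j)^2}{4}(1+o(1))$, ensured by $\lambda>\tfrac{\beta^2}{4}$ for $A_1,B_1$ small. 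In either case $u\equiv0$ on $E$.
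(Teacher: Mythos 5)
Your Steps 1, 2 and 4 are sound, and your final verification of the hypotheses of Theorem 1.1 is correct. The proof stands or falls on Step 3, and there it falls: you assert that the decay $\phi(t)=o\bigl(t^{1-(s-s')}\bigr)$ together with the constancy of $J$ ``forces $J=0$'', but no argument is given (you yourself flag it as the expected obstacle), and in fact the claim is false for complex-valued $u$. On the Euclidean end $E=[1,\infty)\times S^2\subset\mathbb{R}^3$ (which satisfies (I) with $a_j=b_j=1$) the outgoing wave $u=r^{-1}e^{i\sqrt{\lambda}r}$ satisfies $\partial_r u+\bigl(\tfrac{1}{r}-i\sqrt{\lambda}\bigr)u\equiv 0$ and $u\in L^2_{-1/2-s'}$, i.e.\ the full radiation condition $(47)$, yet $J=4\pi\sqrt{\lambda}\neq 0$ and $\phi(t)\to 4\pi$, so neither $J=0$ nor $\lambda\phi\le\psi$ holds. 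All the information you have extracted ($\psi(t_k)\to0$, $\phi(t_k)\to|J|/\sqrt{\lambda}$, $J=\pm|J|$) is consistent with $J\neq0$, so the gap cannot be closed along these lines; the auxiliary estimate via Lemma 2.2 in Step 3 does not help either.

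The paper's own proof avoids this entirely by declaring at the outset that $u$ may be assumed real-valued; for real $u$ one has $J\equiv0$ identically and, pointwise, $|(\partial_r+\rho_\pm)u|^2=\bigl((\partial_r+\tfrac{b}{2r})u\bigr)^2+\lambda u^2\ge\lambda u^2$, which is exactly your inequality $\lambda\phi\le\psi$. Granting that reduction, your Steps 1, 2, 4 give a correct and in fact somewhat cleaner argument than the paper's: you obtain $(*_3)$ directly along the sequence on which $t_k^{2s}\psi(t_k)\to0$, whereas the paper integrates $\sqrt{\lambda}\,\phi(t)\le\int_{S(t)}|u|\,|(\partial_r+\rho_\pm)u|\,dA$ against $(1+t)^{s-s'-1}dt$ to conclude $u\in L^2_{(s-s'-1)/2}(E,v_g)$, then runs a separate cutoff/integration-by-parts argument with the equation to get $|\nabla u|\in L^2_{(s-s'-1)/2}(E,v_g)$, and reads off $(*_3)$ from the finiteness of these integrals. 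So the fix is to restrict to real $u$ (noting, as your outgoing-wave example above also shows, that the statement genuinely requires this: the real part of a complex solution satisfying $(47)$ need not itself satisfy $(47)$, so the reduction is a hypothesis rather than a harmless normalization).
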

\begin{proof}
We shall prove only (1), because the proof of (2) is quite similar. 
By considering the real and imaginary part of $u$, we assume that $u$ is real valued. 
For simplicity, we put $\rho_{\pm} := \frac{b}{2r} \mp i \sqrt{\lambda}$. 
Then, we have for $r_0 \le t$, 
\begin{align*}
  \mp \sqrt{\lambda} \int_{S(t)} u^2 \, dA 
= \int_{S(t)} u \big( {\rm Im} \, (\partial_r + \rho_{\pm})u \big) \, dA, 
\end{align*}
and hence, 
\begin{align*}
  \sqrt{\lambda} \int_{S(t)} u^2 \, dA 
\le 
  \int_{S(t)} |u| |(\partial_r + \rho_{\pm})u| \, dA, 
\end{align*}
where we write $(\partial_r + \rho_{\pm})u:= \frac{\partial u}{\partial r} + \rho_{\pm}u$ for simplicity. 
Multiplying the both sides of the inequality above by $(1+t)^{s-s'-1}$, and integrating it with respect to $t$ over $[r_0,\infty)$, we obtain
\begin{align*}
& \sqrt{\lambda} \int_{E(r_0, \infty)} (1+r)^{s-s'-1} u^2 \, dv_g \\ 
\le 
& \int_{E(r_0, \infty)} (1+r)^{s-s'-1} |u| |(\partial_r + \rho_{\pm})u| 
  \, dv_g \\ 
\le 
& \frac{1}{2} \int_{E(r_0, \infty)} (1+r)^{-1+2s} |(\partial_r + \rho_{\pm})u|^2  \, dv_g 
  + \frac{1}{2} \int_{E(r_0, \infty)} (1+r)^{-1-2s'} u^2 \, dv_g 
< \infty,
\end{align*}
where the right hand side of this inequality is finite by $(47)$. 
Hence, $-\Delta_g u = \lambda u \in L^2_{\frac{s-s'-1}{2}}(E, v_g)$, which implies that $|\nabla u| \in L^2_{\frac{s-s'-1}{2}}(E, v_g)$ as is shown below: we shall set $\ell := \frac{s-s'-1}{2}$, and define, for $t > r_0$ and $x \in E(r_0, \infty)$, 
\begin{align*}
 h_t (x) := 
 \begin{cases}
  \ \ \ \ \ 1 \qquad & \mbox{if}\quad r(x) \le t,\\
  - r(x) + t + 1 \qquad & \mbox{if}\quad t \le r(x) \le t+1,\\
  \ \ \ \ \ 0 \qquad & \mbox{if}\quad t + 1 \le r(x) .
 \end{cases}
\end{align*}
Then, by direct computations, we obtain, for $0 < \varepsilon <1$, 
\begin{align*}
& \int_{E(r_0, t+1)} (h_t)^2 (1+r)^{2\ell} |\nabla u|^2 \, dv_g \\
= 
& \int_{E(r_0, t+1)} \langle \nabla \{ (h_t)^2 (1+r)^{2\ell} u \}, 
  \nabla u \rangle \, dv_g \\ 
& - 2 \int_{E(r_0, t+1)} h_t (1+r)^{2\ell} \Big\{ h_t' + \frac{\ell}{1+r} \Big\}  u \frac{\partial u}{\partial r} \, dv_g \\ 
\le  
& - \int_{S(r_0)} (1+r)^{2\ell} u \frac{\partial u}{\partial r} dA 
  + \lambda \int_{E(r_0, t+1)} (h_t)^2 (1+r)^{2\ell} u^2 \, dv_g \\ 
& + \frac{(1 + |\ell|)^2}{\varepsilon} \int_{E(r_0, t+1)} (h_t)^2 (1+r)^{2\ell}
  u^2 \, dv_g 
  + \varepsilon \int_{E(r_0, t+1)} (h_t)^2 (1+r)^{2\ell} 
  \bigg( \frac{\partial u}{\partial r} \bigg)^2 \, dv_g, 
\end{align*}
and hence, 
\begin{align*}
& (1-\varepsilon) \int_{E(r_0, t+1)} (h_t)^2 (1+r)^{2\ell} |\nabla u|^2 
  \, dv_g \\
\le 
& - \int_{S(r_0)} (1+r)^{2\ell} u \frac{\partial u}{\partial r} dA 
  + \bigg\{ \frac{(1 + |\ell|)^2}{\varepsilon} + \lambda \bigg\} 
  \int_{E(r_0, t+1)} (h_t)^2 (1+r)^{2\ell} u^2 \, dv_g .
\end{align*}
Therefore, letting $t \to \infty$, we obtain 
\begin{align*}
& (1-\varepsilon) \int_{E(r_0, \infty)} (h_t)^2 (1+r)^{2\ell} |\nabla u|^2 
  \, dv_g \\
\le 
& - \int_{S(r_0)} (1+r)^{2\ell} u \frac{\partial u}{\partial r} dA 
  + \bigg\{ \frac{(1 + |\ell|)^2}{\varepsilon} + \lambda \bigg\} 
  \int_{E(r_0, \infty)} (h_t)^2 (1+r)^{2\ell} u^2 \, dv_g < \infty. 
\end{align*}
Thus, $u$, $|\nabla u| \in L^2_{\frac{s-s'-1}{2}}(E, v_g)$. 
Hence, $(*_3)$ with $\gamma = s -s'$ holds. 
\end{proof}

Proposition $6.1$ combined with the use of Lemma $2.1$ with $w=0$ implies that Lemma $8.1$ in \cite{K4} holds.

\section{Absolute continuity and complexity of metric at infinity}
In this section, we shall consider several Riemannian manifolds whose Laplacians are absolutely continuous, but the growth orders of their metrics on ends are complicated at infinity so that radial curvatures on ends diverge at infinity. 
\vspace{3mm}

\noindent{\bf Rotationally symmetric metrics on $\mathbb{R}^2$.} 
Let $(\mathbb{R}^2, g_f := dr^2 + f(r)^2 g_{S^1(1)})$ be a rotationally symmetric manifold, where $r$ stands for the Euclidean distance to the origin; $g_{S^1(1)}$ is the standard metric on $S^1(1) = \{ z \in \mathbb{C} \mid |z|=1 \}$. 
Let $b>0$, $\delta >0$, and $m >0$ be any constants and $r_0 \gg 1$ be a large constant. 
\vspace{2mm}

\noindent (i) For example, assume that 
\begin{align*}
 f (r) = \exp \left( \int_{r_0}^r \bigg\{ \frac{b}{t} + \frac{\sin \big( \exp (t^m) \big)}{t^{1 + \delta}} \bigg\} \, dt \right) \quad {\rm for}~~r \ge r_0. 
\end{align*}
Then, $\displaystyle \Delta_{g_f} r = \frac{b}{r} + O(r^{-1-\delta})$, and hence, the limiting absorption principle holds on respectively $\{ x + iy \mid x > 0,\,y \ge 0 \}$ and $\{ x + iy \mid x > 0,\,y \le 0 \}$, and hence $\Delta_{g_f}$  is absolute continuous on $(0,\infty)$ by Theorem $1.1$ and Theorem $1.2$ in \cite{K4}; in particular, $\sigma_{\rm pp}(-\Delta_{g_f}) = \emptyset$; however, the Gaussian curvature $K(r) = - \frac{f''}{f}(r)$ diverges, while oscillating, as $r \to \infty$. 
\vspace{2mm}

\noindent (ii) For example, assume that 
\begin{align*}
 f (r) = \exp \left( \int_{r_0}^r \bigg\{ b + \frac{\sin \big( \exp (t^m) \big)}{t^{1 + \delta}} \bigg\} \, dt \right) \quad {\rm for}~~r \ge r_0. 
\end{align*}
Then, $\displaystyle \Delta_{g_f} r = b + O(r^{-1-\delta})$, and hence, the limiting absorption principle holds on respectively $\{ x + iy \mid x > \frac{b^2}{4},\,y \ge 0 \}$ and $\{ x + iy \mid x > \frac{b^2}{4},\,y \le 0 \}$, and hence $\Delta_{g_f}$  is absolute continuous on $(\frac{b^2}{4}, \infty)$ by Theorem $1.1$ and Theorem $1.2$ in \cite{K4}; in particular, $\sigma_{\rm pp}(-\Delta_{g_f}) \cap (\frac{b^2}{4}, \infty) = \emptyset$; however, the Gaussian curvature $K(r) = - \frac{f''}{f}(r)$ diverges, while oscillating, as $r \to \infty$.

\vspace{3mm}

\noindent{\bf Metrics on $\boldsymbol{[1, \infty) \times T^2}$.} 
Let $r$ be the standard coordinate on $[1, \infty)$ and $dr^2$ be the standard metric on $[1, \infty)$. 
\vspace{2mm}

\noindent (iii) Let $0 \le \varepsilon_0 \ll 1$ be any small constant, and $\{ \Phi_1, \Phi_2 \}$ be any $C^{\infty}$-partition of unity on $[1, \infty)$ satisfying $\displaystyle\Phi_1 (x), \Phi_2 (x) \ge \frac{\varepsilon_0}{x}$ for $x \in [1,\infty)$. 
Let $c > 0$ be any constant, and set 
\begin{align*}
 f_1 (r) := \exp \left( c \int_1^r \Phi_1 (t) \, dt \right) ~;~
 h_1 (r) := \exp \left( c \int_1^r \Phi_2 (t) \, dt \right) .
\end{align*}
We shall define a metric $g_1 = g_1(c)$ on $E := [1, \infty) \times S^1(1) \times S^1(1)$ by 
\begin{align*}
  g_1 = g_1(c_1) := dr^2 + f_1(r)^2 g_{S^1(1)} + h_1(r)^2 g_{S^1(1)} .
\end{align*}
Then, $\Delta_{g_1} r \equiv c$ and $\displaystyle \nabla dr \ge \frac{c \varepsilon_0}{r} \,\widetilde{g}_1$ on $(E, g_1(c))$. 
\vspace{3mm}

\noindent (iv) Let $0 \le \varepsilon_0 \ll \frac{1}{2}$ be any small constant, and $\{ \Phi_3, \Phi_4 \}$ be any $C^{\infty}$-partition of unity on $[1, \infty)$ satisfying $\Phi_3 (x), \Phi_4 (x) \ge \varepsilon_0$ for $x \in [1,\infty)$. Let $c > 0$ be any constant. 
For example, we shall set 
\begin{align*}
 f_2 (r) = \exp \left( c \int_1^r \frac{\Phi_3 (t)}{t} dt \right) ~;~
 h_2 (r) = \exp \left( c \int_1^r \frac{\Phi_4 (t)}{t} dt \right) ,
\end{align*}
and define a metric $g_2 = g_2(c)$ on $E=[1, \infty) \times S^1(1) \times S^1(1)$ by
\begin{align*}
  g_2 = g_2(c) := dr^2 + f_2(r)^2 g_{S^1(1)} + h_2(r)^2 g_{S^1(1)} .
\end{align*}
Then, $\displaystyle \Delta_{g_2} r \equiv \frac{c}{r}$ and $\displaystyle \nabla dr \ge \frac{c \varepsilon_0}{r} \,\widetilde{g}_2$ on $(E, g_2(c))$. 
\vspace{3mm}

Let $M_0^3$ be any $3$-dimansional compact $C^{\infty}$-manifolds with boundary $\partial M_0^3$. 
Assume that $\partial M_0^3$ consists of a disjoint union of finitely many $T^2$.
For example, we shall take $M_0^3 = [-1, 1] \times T^2$. 
\vspace{2mm}

\noindent (a) Firstly, we shall attach $(E, g_1(c_1))$ and $(E, g_1(c_2))$ to boundaries $\{ -1 \} \times T^2$ and $\{ 1 \} \times T^2$ of $[-1, 1] \times T^2$, respectively; after that, we shall extends the metrics $g_1(c_1)$ and $g_1(c_2)$, to a metric $g$ on $M^3 := \mathbb{R} \times T^2$. 
Assume that $c_1 < c_2$. 
Then, the limiting absorption principle holds on respectively $\{ x + iy \in \mathbb{C} \mid x > \frac{(c_1)^2}{4},\,x \neq \frac{(c_2)^2}{4},\,y \ge 0 \}$ and $\{ x + iy \in \mathbb{C} \mid x > \frac{(c_1)^2}{4},\,x \neq \frac{(c_2)^2}{4},\,y \le 0 \}$, and $-\Delta_g$ on $L^2(M^3, v_g)$ is absolutely continuous on $( \frac{\min\{ c_1, c_2 \}^2}{4}, \infty )$ by Theorem $1.1$ and Theorem $1.2$ in \cite{K4}. 
\vspace{2mm}

\noindent (b) Secondly, we shall attach $(E, g_1(c_1))$ and $(E, g_2(c_2))$ to boundaries $\{ -1 \} \times T^2$ and $\{ 1 \} \times T^2$ of $[-1, 1] \times T^2$, respectively; after that, we shall extends the metrics, $g_1(c_1)$ and $g_2(c_2)$, to a metric $g$ on $M^3 := \mathbb{R} \times T^2$. 
Then, the limiting absorption principle holds on respectively $\{ x + iy \mid x > 0,\,x \neq \frac{(c_1)^2}{4},\,y \ge 0 \}$ and $\{ x + iy \mid x > 0,\,x \neq \frac{(c_1)^2}{4},\,y \le 0 \}$, and $-\Delta_g$ on $L^2(M^3, v_g)$ is absolutely continuous on $( 0, \infty )$ by Theorem $1.1$ and Theorem $1.2$ in \cite{K4}. 
Note that, as for the merely absence of eigenvalues, ``small perturbation $\frac{\varepsilon}{r}$'' of $\Delta_g r$ is allowed on an end $[1, \infty) \times T^2$; see Corollary $1.3$. 
\vspace{2mm}

\noindent (c) Thirdly, we shall attach $(E, g_2(c_1))$ and $(E, g_2(c_2))$ to boundaries $\{ -1 \} \times T^2$ and $\{ 1 \} \times T^2$ of $M_0^3 = [-1, 1] \times T^2$, respectively; after that, we shall extends the metrics, $g_2(c_1)$ and $g_2(c_2)$, to a metric $g$ on $M^3:= \mathbb{R} \times T^2$. 
Then, the limiting absorption principle holds on respectively $\{ x + iy \mid x > 0,\,y \ge 0 \}$ and $\{ x + iy \mid x > 0,\,y \le 0 \}$, and $-\Delta_g$ on $L^2(M^3, v_g)$ is absolutely continuous on $( 0, \infty )$ by Theorem $1.1$ and Theorem $1.2$ in \cite{K4}. 
Note that, as for the merely absence of eigenvalues, ``small perturbation $\frac{\varepsilon}{r}$'' of $\Delta_g r$ is allowed on both ends; see Corollary $1.3$. 
\vspace{2mm}

To see the complexity of the growth order of $g_1$ and $g_2$ near the infinity, we shall consider the following example. 
Let $0 < \varepsilon_0 \ll 1$ be a small constant. 
Let $\{ a_n \}_{n =1}^{\infty}$ be any increasing sequence satisfying 
\begin{align}
  a_1=1~; \quad a_i < a_j\quad {\rm for~any}~i<j~; \quad \lim_{n\to \infty} a_n = \infty; 
\end{align}
let $\{ b_n \}_{n =1}^{\infty}$ be any sequence of positive numbers satisfying
\begin{align}
  \frac{\varepsilon_0}{a_{2n-2}} \le \min \{ b_n , 1 - b_n \} 
  \quad {\rm for}~n \ge 2 .
\end{align}
We shall take a $C^{\infty}$-function $\Phi_1: [1, \infty) \to (0,1)$ so that 
\begin{align*}
& \Phi_1(x) = b_n \quad {\rm for}~x \in [a_{2n-1}, a_{2n}]~{\rm and}~n \ge 1;~\\
& \Phi_1~{\rm is~monotone~on}~[a_{2n}, a_{2n+1}] \quad {\rm for}~n \ge 1.
\end{align*}
Then, $\{ \Phi_1, \Phi_2:=1 - \Phi \}$ is a partition of unity on $[1, \infty)$ satisfying $\Phi_1(x), \Phi_2(x) \ge \frac{\varepsilon_0}{x}$. 
However, since the choices of $\{ a_n \}$ and $\{ b_n \}$ satisfying $(49)$ and $(50)$ is arbitral, growth orders of metrics $g_1$ and $g_2$ can be very complicated near the infty. 
For example, consider the case of a random choice of $\{ b_n \}$ and $\lim_{n \to \infty} (a_{n+1} - a_n)=0$. 
\vspace{3mm}

\noindent {\bf Punctured compact $4$-manifolds.} 
In $4$-dimendional case, by removing finite points form {\it any} compact manifold $M_0^4$ without boundary, we can obtain many examples, because $S^3(1) = SU(2)$ is a Lie group. 
Let $X_1$, $X_2$, and $X_3$ is a left invariant orthonormal frame on $SU(2)$ with respect to $-B$ such that 
\begin{align*}
  [X_1, X_2] = 2 X_3,~~[X_2, X_3] = 2 X_1,~~[X_3, X_1] = 2 X_2, 
\end{align*}
where $B$ stands for the Killing form on $\mathfrak{su}(2)$; let $\omega_1$, $\omega_2$, and $\omega_3$ be left invariant $1$-forms dual to $X_1$, $X_2$, and $X_3$, respectively. 
\vspace{2mm}

\noindent (v)~Let $0 < \varepsilon_0 \ll 1$ be any small constant, and $\{ \Phi_1, \Phi_2, \Phi_3 \}$ be any $C^{\infty}$-partition of unity on $[1, \infty)$ satisfying $\displaystyle\Phi_j (x) \ge \frac{\varepsilon_0}{x}$ for $x \in [1, \infty)$ and $j = 1, 2, 3$; let $\beta > 0$ be any constant, and set 
\begin{align*}
  \phi_j(r) := \exp \left( \beta \int_1^r \Phi_j(t) \, dt \right) \qquad
  {\rm for}~~j=1, 2, 3. 
\end{align*}
We shall define a Riemannian metric $g_3$ on $E := [1, \infty) \times S^3(1)$ by
\begin{align*}
  g_3 = g_3 (\beta) := 
  dr^2 + \phi_1(r)^2 \omega_1 + \phi_2(r)^2 \omega_2 + \phi_3(r)^2 \omega_3. 
\end{align*}
Then, $\Delta_{g_3} r \equiv \beta$ and $\displaystyle\nabla dr \ge \frac{\beta \varepsilon_0}{r} \widetilde{g}_3$ on $(E, g_3(\beta))$. 
\vspace{2mm}

\noindent (vi)~Let $0 < \varepsilon_0 \ll 1$ be any small constant, and $\{ \Phi_1, \Phi_2, \Phi_3 \}$ be any $C^{\infty}$-partition of unity on $[1, \infty)$ satisfying $\Phi_j (x) \ge \varepsilon_0$ for $x \in [1, \infty)$ and $j = 1, 2, 3$; let $\beta > 0$ be any constant, and set 
\begin{align*}
  \phi_j(r) := \exp \left( \beta \int_1^r \frac{\Phi_j(t)}{t} \, dt \right) \qquad
  {\rm for}~~j=1, 2, 3. 
\end{align*}
We shall define a Riemannian metric $g_4$ on $E := [1, \infty) \times S^3(1)$ by
\begin{align*}
  g_4 = g_4 (\beta) := 
  dr^2 + \phi_1(r)^2 \omega_1 + \phi_2(r)^2 \omega_2 + \phi_3(r)^2 \omega_3. 
\end{align*}
Then, $\displaystyle\Delta_{g_4} r \equiv \frac{\beta}{r}$ and $\displaystyle\nabla dr \ge \frac{\beta \varepsilon_0}{r} \widetilde{g}_4$ on $(E, g_4(\beta))$. 
\vspace{2mm}

Let $M_0^4$ be any compact $4$-dimensional $C^{\infty}$-manifold without boundary and $p_1, p_2, \cdots, p_m$ be a points of $M_0^4$. 
Let $(E_1, g_4(\beta_1)), \cdots, (E_{m_0}, g_4(\beta_{m_0}))$ and \linebreak $(E_{m_0+1}, g_3(\beta_{m_0+1})), \cdots, (E_{m}, g_3(\beta_{m}))$ be ends as is stated above, where $E_j = E = [1, \infty) \times S^3(1)$ for $1 \le j \le m$ and $0 \le m_0 < m$ are integers. 
If $m_0=0$, we shall mean that there is no end satisfying (iv) for any constant $\beta > 0$. 
We shall expand a neighborhood around the point $p_j$, attach end $E_j$ stated above for $1 \le j \le m$, and define a metric $g$ on $M^4 := M_0^4 \, \sharp \, E_1 \, \sharp \cdots \, \sharp \, E_m = M_0^4 \, \sharp \, m E$ so that $g|_{E_j} = g_4(\beta_j)$ for $1 \le j \le m_0$; $g|_{E_j} = g_3(\beta_j)$ for $m_0 + 1 \le j \le m$. 
Then, the limiting absorption principle holds on respectively $\{ x + iy \in \mathbb{C} \mid x > 0,\, x \neq \frac{(\beta_j)^2}{4},\,j = m_0 + 1, \cdots, m,\,y \ge 0 \}$ and $\{ x + iy \in \mathbb{C} \mid x > 0,\, x \neq \frac{(\beta_j)^2}{4},\,j = m_0 + 1, \cdots, m,\,y \le 0 \}$, and $-\Delta_g$ on $L^2(M^4, v_g)$ is absolutely continuous on $( \frac{(\beta_{\min})^2}{4}, \infty )$ by Theorem $1.1$ and Theorem $1.2$ in \cite{K4}, where $\beta_{\min} := \{ \beta_j \mid j=1, \cdots, m\}$. 
Note that, as for the merely absence of eigenvalues, ``small perturbation $\frac{\varepsilon}{r}$'' of $\Delta_g r$ is allowed on ends $(E_j, g_4(\beta_j))$ for $1 \le j \le m_0$, if $m_0 \ge 1$; see Corollary $1.3$. 
\vspace{2mm}

Growth orders of metrics $g_3$ and $g_4$ on $[1,\infty) \times SU(2)$ near the infinity can be more complicated than those of the case of $[1,\infty) \times T^2$, because freedom of choice of partition of unity increases: $\sharp\{ \Phi_1, \Phi_2, \Phi_3 \} > \sharp\{ \Phi_1, \Phi_2 \}$.


\vspace{7mm}

\end{document}